\newcommand{\map}[1]{\xrightarrow{#1}}
\newcommand{\iso}{\cong}
\newcommand{\Hom}{\mathrm{Hom}}
\newcommand{\Aut}{\mathrm{Aut}}
\newcommand{\End}{\mathrm{End}}
\newcommand{\Spec}{\mathrm{Spec}}
\newcommand{\Q}{\mathbb Q}
\newcommand{\Z}{\mathbb Z}
\newcommand{\R}{\mathbb R}
\newcommand{\C}{\mathbb C}
\newcommand{\F}{\mathbb F}
\newcommand{\A}{\mathbb A}
\newcommand{\co}{\mathcal O}
\newcommand{\alg}{\mathrm{alg}}
\newcommand{\ord}{\mathrm{ord}}
\newcommand{\length}{\mathrm{length}}
\newcommand{\action}{\bullet}
\newcommand{\inv}{\mathrm{inv}}
\newcommand{\Diff}{\mathrm{Diff}}
\newcommand{\Sppt}{\mathrm{Sppt}}
\newcommand{\cmdeg}{\mathrm{deg}_{\mathrm{CM}}}
\newcommand{\CM}{\mathrm{CM}}
\newcommand{\Sch}{\mathfrak{S}}
\newcommand{\tr}{\mathrm{Tr}}
\newcommand{\SL}{\mathrm{SL}}
\newcommand{\kzxz}[4]{\left(\begin{smallmatrix} #1 & #2 \\ #3 & #4\end{smallmatrix}\right) }
\newcommand{\SO}{\mathrm{SO}}
\newcommand{\CL}{\mathrm{CL}}
\newcommand{\norm}{\operatorname{N}}
\newcommand{\ff}{\mathrm{if  }}
\newcommand{\vol}{\mathrm{Vol}}
\date{}
\title{Singular Moduli Refined }
\author{Benjamin Howard \thanks{Supported in part by NSF grant DMS-0901753.}\and
Tonghai Yang\thanks{Supported in part by grants DMS-0555503, NSFC-10628103.}}
\begin{document}

\maketitle


\theoremstyle{plain}
\newtheorem{Thm}{Theorem}[section]
\newtheorem{Prop}[Thm]{Proposition}
\newtheorem{Lem}[Thm]{Lemma}
\newtheorem{Cor}[Thm]{Corollary}
\newtheorem{Conj}[Thm]{Conjecture}
\newtheorem{BigThm}{Theorem}
\newtheorem{BigCor}[BigThm]{Corollary}

\theoremstyle{definition}
\newtheorem{Def}[Thm]{Definition}
\newtheorem{Hyp}[Thm]{Hypothesis}

\theoremstyle{remark}
\newtheorem{Rem}[Thm]{Remark}
\newtheorem{Ques}[Thm]{Question}

\numberwithin{equation}{section}
\renewcommand{\theBigThm}{\Alph{BigThm}}
\renewcommand{\theBigCor}{\Alph{BigCor}}



\thispagestyle{empty}

\begin{abstract}
\vskip 3mm\footnotesize{

\vskip 4.5mm
\noindent
In this paper, we give a refinement of the work of Gross and Zagier on singular moduli. 
Let $K_1$ and $K_2$ be two imaginary quadratic fields with relatively prime discriminants $d_1$ and $d_2$, and let 
$F=\Q(\sqrt {d_1 d_2})$. Hecke constructed a Hilbert modular Eisenstein series over $F$ of weight $1$ 
whose functional equation forces it to vanish at $s=0$. For CM elliptic curves $E_1$ and $E_2$  with complex 
multiplication  by $\co_{K_1}$ and $\co_{K_2}$, we define an $\co_F$-module structure and  $\co_F$-quadratic form 
$\deg_\CM$ on $\Hom(E_1, E_2)$, which is totally positive definite and satisfies $\tr_{F/\Q} \deg_{\CM}=\deg$. 
For each totally positive  $\alpha \in F$ we consider the moduli stack $\mathcal{X}_\alpha$ of triples $(E_1, E_2, j)$ 
with  $E_i$ as above  and $j \in \Hom(E_1,E_2)$ with $\deg_{\CM}(j) =\alpha$.  
We prove that $\mathcal{X}_\alpha$  has dimension $0$, and that its arithmetic degree is equal to the 
$\alpha$-th Fourier coefficient of the central derivative of Hecke's Eisenstein series.

\vspace*{2mm}
\noindent{\bf 2000 Mathematics Subject Classification: 11G15, 11F41, 14K22}

\vspace*{2mm}
\noindent{\bf Keywords and Phrases: Singular moduli, arithmetic intersection, Eisenstein series }}

\end{abstract}
%

\section{Introduction}


Let $K_1$ and $K_2$ be non-isomorphic quadratic imaginary fields with
discriminants $d_1$ and $d_2$, respectively,  and set
$K=K_1\otimes_\Q K_2$.  Let $F$ be the real quadratic subfield of $K$, set
$D=\mathrm{disc}(F)$, and let $\mathfrak{D}\subset \co_F$ be the
different of $F/\Q$.   Let $x\mapsto \overline{x}$ denote complex
conjugation on $K$ and set $\mathbf{w}_i=|\co_{K_i}^\times|$.
 Let $\chi$ be  the quadratic Hecke character of $F$ associated to $K$, and let
$\sigma_1$ and $\sigma_2$ be the  two  real embeddings of $F$.
We assume throughout this paper that $\gcd(d_1,d_2)=1$ so  that $K/F$ is unramified
at all finite places, and  $\co_{K_1}\otimes_\Z\co_{K_2}$ is the maximal order in $K$.

Almost one hundred years ago,  Hecke
constructed his famous Eisenstein series (see \cite[(7.2)]{GZSingular})  of parallel weight $1$ for $\SL_2(\co_F)$
   \begin{align*} \label{Hecke2}
   &E^*(\tau_1, \tau_2, s)
    =D^{\frac{s+1}2} \left(\pi^{-\frac{s+2}2}\Gamma\left(\frac{s+2}2\right)\right)^2
\sum_{[\mathfrak a] \in \CL(F)} \chi(\mathfrak a) \norm(\mathfrak
a)^{1+s}
\\
 &\quad \times
 \sum_{(0, 0) \ne (m, n) \in \mathfrak a^2/\co_F^\times}
 \frac{ (v_1 v_2)^{\frac{s}2}}{(m (\tau_1, \tau_2)+n) |m (\tau_1, \tau_2)+n|^s}.
    \end{align*}
    Here $\CL(F)$ is the ideal class group of $F$,  $[\mathfrak{a}]$ denotes the class of the fractional ideal $\mathfrak{a}$,  and
    $$
m (\tau_1, \tau_2)+n= (\sigma_1(m) \tau_1 + \sigma_1(n))(\sigma_2(m)
\tau_2 +\sigma_2(n)). $$
Hecke showed that this sum, convergent for $\mathrm{Re}(s) \gg 0$,  has meromorphic continuation to all
$s$ and defines   a non-holomorphic Hilbert modular form of weight $1$ for     $\SL_2(\co_F)$ which
is holomorphic (in the variable $s$) in a neighborhood of $s=0$. The value $E^*(\tau_1, \tau_2, 0)$ at $s=0$ is a holomorphic     Hilbert modular form
of weight $1$ (Hecke's trick). He further computed the Fourier expansion of
this holomorphic modular form.   Unfortunately, he missed  a sign in the
calculation, and it turns out that $E^*(\tau_1, \tau_2, 0) = 0$ identically.  In the early 1980's,  Gross and  Zagier took
advantage of this fact to compute the central derivative at
$s=0$, and found that the Fourier coefficients of the diagonal restriction
to the upper half plane  are very closely related
to the  factorization of  singular moduli      (see \cite{GZSingular}).  Their result can be  rephrased
(see \cite[Section 3]{Yam=1} or Corollary \ref{cor1.2} below for more details) in terms of arithmetic intersections as follows:
if $\mathcal{E}$ is the moduli stack of elliptic curves over $\Z$-schemes then
the $m$-th Fourier coefficient of $E^{*,\prime}(\tau, \tau, 0)$  is the arithmetic intersection on
$\mathcal{E}\times\mathcal{E}$ of the $m$-th
Hecke correspondence with the  codimension two cycle of points representing pairs
$(\mathbf{E}_1,\mathbf{E}_2)$ of elliptic curves with complex multiplication by  
$\co_{K_1}$ and $\co_{K_2}$, respectively.  One naturally asks, for $\alpha\in F^\times$
what is the arithmetic meaning of the $\alpha$-th Fourier
coefficient of the central derivative $E^{*, \prime}(\tau_1,\tau_2, 0)$ itself, before one restricts to the
diagonal $\tau_1=\tau_2$? In another word, is there an arithmetic Siegel-Weil formula
(in the sense of \cite{KuMSRI} or \cite{KRYBook})
for this Hecke Eisenstein series? The purpose of this paper is to answer this question positively.

Let $\mathcal X$ be the algebraic stack over $\mathbb Z$ representing the functor that
 assigns to every scheme $S$ the  category  $\mathcal{X}(S)$  of   pairs
 $(\mathbf{E}_1, \mathbf{E}_2)$ in which  each $\mathbf{E}_i=(E_i,\kappa_i)$ consists of  an  elliptic curve 
 $E_i$ over $S$ and an action $\kappa_i:\co_{K_i}\map{}\End(E_i)$.     For an object  
 $(\mathbf E_1, \mathbf E_2)$ of  $\mathcal X(S)$ let
 $$
 L(\mathbf E_1, \mathbf E_2)  =\Hom(  E_1,  E_2)
 $$
be the $\mathbb Z$-module of  homomorphisms  from $ E_1$ to $E_2$,  equipped with the quadratic form
$\deg$.    Let $[\, , \,]$ be the  bilinear form associated to $\deg$.
The maximal order  $$\co_K  =\co_{K_1} \otimes_{\mathbb Z} \co_{K_2}$$
acts on $L(\mathbf E_1, \mathbf E_2) $ by
$$
(t_1 \otimes t_2) \action  j=   \kappa_2 (t_2) \circ   j \circ  \kappa_1 (\bar{t}_1)
$$
($t_i \in \co_{K_i}$) making $L(\mathbf E_1, \mathbf E_2)$ into  an    $\co_K$-module.  The
action satisfies
$$
[t \action  j_1, j_2] = [j_1, \bar t \action  j_2]
$$
  for all $t\in \co_K$, and it follows that if we view $L(\mathbf{E}_1,\mathbf{E}_2)$ as an $\co_F$-module then
   there is a unique $\co_{F}$-bilinear form
  $$
  [\ ,\ ]_{\CM}: L(\mathbf E_1, \mathbf E_2)\times L(\mathbf E_1, \mathbf E_2) \map{}\mathfrak{D}^{-1}
  $$
  satisfying $[j_1,j_2] = \tr_{F/\Q} [j_1,j_2]_\CM$.  If $\cmdeg$ is the totally positive definite $F$-quadratic form on
  $L(\mathbf{E}_1,\mathbf{E}_2)\otimes_\Z\Q$ corresponding to $[\ ,\ ]_\CM$ then
  $$
  \deg (j) = \tr_{F/\mathbb Q} \deg_{\CM} (j).
  $$
For any  $\alpha \in F^\times$  let $\mathcal{X}_{\alpha}$ be the algebraic stack representing  the functor that  assigns to a
scheme  $S$ the category $\mathcal{X}_\alpha(S)$ of  triples $(\mathbf    E_1, \mathbf E_2, j)$  in which
$(\mathbf E_1, \mathbf E_2)$ is an object of $\mathcal X(S)$ and $j \in L(\mathbf E_1, \mathbf E_2)$ with   $\deg_{\CM}(j)=\alpha$.
It is clear that $\mathcal X_\alpha$ is empty unless  $\alpha$ is totally positive.

For $\alpha\in F^\times$ totally positive  define the \emph{Arakelov degree}
\begin{equation}\label{arakelov degree}
\deg(\mathcal{X}_\alpha) = \sum_p \log(p) \sum_{x\in [\mathcal{X}_\alpha(\F_p^\alg)] }
e_x^{-1} \cdot\length( \co^\mathrm{sh}_{  \mathcal{X}_\alpha, x  } )
\end{equation}
where  $[\mathcal{X}_\alpha(S)]$ is the set of     isomorphism classes of objects in the category
$\mathcal{X}_\alpha(S)$,  $\co^\mathrm{sh}_{  \mathcal{X}_\alpha, x } $ is the
strictly Henselian local ring of $\mathcal{X}_\alpha$ at $x$, and
$e_x$ is the order of the automorphism group of the triple
$(\mathbf{E}_1,\mathbf{E}_2,j)$ corresponding to $x$.  Define $
\Diff(\alpha) $ to be the set of finite primes $\mathfrak
p$ of $F$ satisfying
$$
\chi_{\mathfrak p}(\alpha \mathfrak D) =-1.
$$
Using the product formula $\prod_w\chi_w(\alpha\sqrt{D})=1$ and the fact that $\mathfrak{D}=\sqrt{D}\co_F$,
we see that  $\Diff(\alpha)$ has  odd cardinality, and in particular is nonempty.
If $\mathfrak{b}$ is a fractional $\co_{F}$-ideal  we define $\rho(\mathfrak{b})$ to be the number of
ideals $\mathfrak{B}\subset \co_{K}$ satisfying $N_{K/F}(\mathfrak{B}) = \mathfrak{b}$.  If
$\ell$ is a rational prime  we define $\rho_\ell(\mathfrak{b})$ to be the number of ideals
$\mathfrak{B}\subset \co_{K,\ell}$ satisfying
$N_{K_\ell/F_{\ell}}(\mathfrak{B}) = \mathfrak{b}_\ell$.  Thus
\begin{equation}\label{product ideals}
\rho(\mathfrak{b}) = \prod_\ell \rho_\ell(\mathfrak{b}).
\end{equation}

For the proof of the following theorem see Section \ref{ss:arithmetic section}.

\begin{BigThm}\label{geometric theorem}
Suppose $\alpha\in F$ is totally positive. If   $\alpha\in \mathfrak{D}^{-1}$ and  
$\Diff(\alpha)=\{\mathfrak{p}\}$  then $\mathcal{X}_\alpha$
has dimension zero, is supported in characteristic $p$ (the rational prime below $\mathfrak{p}$), and satisfies
$$
\deg(\mathcal{X}_\alpha) = \frac{1}{2} \ord_\mathfrak{p}(\alpha\mathfrak{p}\mathfrak{D})
 \rho(\alpha\mathfrak{D}\mathfrak{p}^{-1})\cdot \log(p).
$$
If $\alpha \notin \mathfrak D^{-1}$ or if $|\Diff(\alpha)| >1$, then   $\mathcal{X}_\alpha=\emptyset$.
\end{BigThm}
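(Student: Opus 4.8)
The plan is to analyze the stack $\mathcal{X}_\alpha$ fiber by fiber over $\Spec\Z$ and to reduce the entire computation to the local structure of homomorphisms between supersingular elliptic curves. First I would record the rigidity of complex multiplication: if $E_1,E_2$ carry CM by $\co_{K_1},\co_{K_2}$ over a field of characteristic $0$, or over a field of characteristic $p$ in which $E_1$ or $E_2$ has \emph{ordinary} reduction, then any nonzero $j\in\Hom(E_1,E_2)$ would be an isogeny, forcing $K_1\iso K_2$ and contradicting $K_1\not\iso K_2$. Hence $L(\mathbf E_1,\mathbf E_2)=0$ in all these cases, so $\mathcal{X}_\alpha$ (for $\alpha\neq 0$) has no points in characteristic $0$ and is supported on geometric points at which \emph{both} $E_1$ and $E_2$ are supersingular. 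This already gives $\dim\mathcal{X}_\alpha=0$ and support in finitely many characteristics; it remains to pin down the characteristic, count the points, and compute multiplicities.

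Second, I would translate the supersingularity constraint into the arithmetic of $F$ and $K$. A curve with CM by $\co_{K_i}$ has supersingular reduction above $p$ exactly when $p$ is nonsplit in $K_i$; requiring this for both $i$ forces $p$ to be nonsplit in $K_1$ and $K_2$ simultaneously. Via $K=K_1K_2\supset F$ and the definition of $\chi$ as the quadratic character of $K/F$, this pins $p$ below a prime $\mathfrak p$ of $F$ that is nonsplit in $K$, i.e.\ with $\chi_{\mathfrak p}$ nontrivial. The integrality $\alpha\in\mathfrak D^{-1}$ is forced because $\deg_{\CM}$ takes values in $\mathfrak D^{-1}$, so $\alpha\notin\mathfrak D^{-1}$ gives emptiness immediately. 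For the remaining emptiness clause, the space $(L(\mathbf E_1,\mathbf E_2)\otimes\Q,\deg_{\CM})$ is an $F$-quadratic space whose local Hasse invariants are governed by $\chi$ and which, at a supporting prime, must be the totally positive definite space attached to the quaternion algebra ramified at exactly $\mathfrak p$ and the archimedean places. Together with the product formula $\prod_w\chi_w(\alpha\sqrt D)=1$, the local representability of $\alpha$ then forces $\Diff(\alpha)$ to be a single finite prime; when $|\Diff(\alpha)|>1$ the relevant local norm equation is unsolvable and $\mathcal{X}_\alpha=\emptyset$. This identifies the characteristic as the $p$ below the unique $\mathfrak p\in\Diff(\alpha)$.

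Third, over $\overline{\F}_p$ I would use Deuring's theory to realize $\Hom(E_1,E_2)$ as a lattice in the definite quaternion algebra $B_p$. The $\co_K$-action $(t_1\otimes t_2)\action j=\kappa_2(t_2)\circ j\circ\kappa_1(\bar t_1)$ makes $L(\mathbf E_1,\mathbf E_2)$ a rank-one $\co_K$-module, hence locally a fractional $\co_K$-ideal, on which $\deg_{\CM}$ is (a multiple of) the relative norm $N_{K/F}$. Counting $j$ with $\deg_{\CM}(j)=\alpha$ modulo the automorphisms coming from $\co_{K_1}^\times\times\co_{K_2}^\times$ then becomes a count of integral ideals $\mathfrak B\subset\co_K$ of prescribed relative norm, which by the local factorization \eqref{product ideals} reproduces $\rho(\alpha\mathfrak D\mathfrak p^{-1})$; the shift by $\mathfrak p^{-1}$ reflects that the local lattice at the ramified prime $\mathfrak p$ is the maximal order of $B_p$ rather than a split one. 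The weighted sum $\sum_x e_x^{-1}(\cdots)$ in \eqref{arakelov degree} is where the $\mathbf w_i$/automorphism bookkeeping and the factor $\tfrac12$ must be tracked carefully against this ideal count.

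Finally, the main obstacle is the multiplicity: for a fixed supersingular point $(\mathbf E_1,\mathbf E_2,j)$ I must compute $\length(\co^{\mathrm{sh}}_{\mathcal{X}_\alpha,x})$, which measures the largest infinitesimal neighborhood to which $j$ deforms together with the two CM structures. This is governed by the theory of canonical and quasi-canonical liftings of Gross, as developed in the arithmetic Gross--Zagier framework: the length equals the level at which the endomorphism-type condition defining $j$ survives deformation, and I expect it to contribute precisely the factor $\tfrac12\ord_{\mathfrak p}(\alpha\mathfrak p\mathfrak D)$ (an integer, since $\ord_{\mathfrak p}(\alpha\mathfrak D)$ is odd when $\chi_{\mathfrak p}(\alpha\mathfrak D)=-1$). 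Assembling the point count (the $\rho$-factor), the deformation length (the $\ord_{\mathfrak p}$-factor), the archimedean weight $\log p$, and the automorphism normalization then yields the stated formula for $\deg(\mathcal{X}_\alpha)$.
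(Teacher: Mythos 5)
Your outline follows essentially the same route as the paper's proof: CM rigidity forces supersingular support in a single characteristic (Proposition \ref{Prop 2.2.2}), the local structure of $(V(\mathbf{E}_1,\mathbf{E}_2),\cmdeg)$ produces the $\Diff(\alpha)$ dichotomy and both emptiness clauses, a class-group/ideal count produces the factor $\rho(\alpha\mathfrak{D}\mathfrak{p}^{-1})$, and Gross's theory of canonical and quasi-canonical lifts produces the multiplicity $\tfrac12\ord_\mathfrak{p}(\alpha\mathfrak{p}\mathfrak{D})$. Your quaternionic-ideal count stands in for the paper's adelic unfolding and orbital integrals (Proposition \ref{Prop:unfolding}, Theorem \ref{Thm:global orbital}), but this is a cosmetic difference: the orbital integrals are evaluated by exactly the local lattice-counting you describe, and your observation that the $\mathfrak{p}^{-1}$ shift comes from the nonsplit local lattice at $p$ matches Lemma \ref{Lem:CM model I}.

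The one genuine gap is the point count, which you explicitly defer (``must be tracked carefully'') but which is where the constant in the formula lives. The weighted count is $\frac{1}{\mathbf{w}_1\mathbf{w}_2}\sum_{(\mathbf{E}_1,\mathbf{E}_2)}\#\{j:\cmdeg(j)=\alpha\}$ over isomorphism classes of supersingular CM pairs, and your reduction to ``ideals of prescribed relative norm'' only determines this total once you know how many classes of pairs there are and which of them support any $j$ at all. This requires the paper's \emph{reflex prime} (Definition \ref{Def:reflex prime}), which your proposal never isolates: when $p$ is inert in both $K_1$ and $K_2$ it splits in $F$, there are two candidate primes of $F$ above $p$, and the Dieudonn\'e-module computation of Lemma \ref{Lem:CM model I} shows a given pair represents $\alpha$ locally at $p$ only when its reflex prime is the unique element of $\Diff(\alpha)$. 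One then needs Proposition \ref{Prop:naked count} --- proved by a deformation-theoretic lifting argument to characteristic zero together with the four-orbit structure of $[\mathcal{X}(\C)]$ --- that there are exactly $2\cdot|\Gamma|$ such classes, forming two free $\Gamma$-orbits; the resulting factor $2$ cancels against the $\tfrac{\mathbf{w}_1\mathbf{w}_2}{2}$ from the unit count (Lemma \ref{Lem:unit calc}), which is why the weighted count is exactly $\rho(\alpha\mathfrak{D}\mathfrak{p}^{-1})$ with no stray factor of $2$. Without this enumeration your argument determines $\deg(\mathcal{X}_\alpha)$ only up to such a factor. Secondarily, your deformation length is asserted as an expectation; it is correct, but the paper must prove it in two separate cases --- inert, giving length $(\ord_\mathfrak{p}(\alpha\mathfrak{D})+1)/2$ via Gross's result (Lemma \ref{Lem:local ring unr}), and ramified, which additionally needs Wewers' lifting result for $j=\Pi^m u$ (Lemma \ref{Lem:local ring ram}) --- both collapsing to $\tfrac12\ord_\mathfrak{p}(\alpha\mathfrak{p}\mathfrak{D})$.
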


The functional equation forces $E^*(\tau_1, \tau_2, 0) =0$, and the central derivative has a Fourier expansion
$$
E^{*, \prime}(\tau_1, \tau_2, 0)
 = \sum_{\alpha \in \mathfrak D^{-1}} a_\alpha(v_1, v_2) \cdot q^\alpha
 $$
 where $v_i = \mathrm{Im}(\tau_i)$, $e(x)=e^{2\pi i x}$, and $q^\alpha =e( \sigma_1(\alpha) \tau_1 + \sigma_2(\alpha)
 \tau_2)$.

\begin{BigThm} \label{theo1.2} 
Suppose $\alpha \in F$ is totally positive. If  $\alpha\in\mathfrak{D}^{-1}$  and
$\Diff(\alpha)=\{\mathfrak{p}\}$, then $a_\alpha=a_{\alpha}(v_1, v_2)$ is independent of $v_1, v_2 $,  and
 $$
 a_\alpha =2  \ord_{\mathfrak p} (\alpha \mathfrak p \mathfrak D)  \rho(\alpha
 \mathfrak D \mathfrak p^{-1}) \cdot \log (p).
 $$
Here $p$ is the rational prime below $\mathfrak{p}$.  If $\alpha \notin \mathfrak D^{-1}$ or if $|\Diff(\alpha)| >1$,
then $a_\alpha =0$.
\end{BigThm}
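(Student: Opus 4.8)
The plan is to compute the Fourier coefficients of $E^{*,\prime}(\tau_1,\tau_2,0)$ directly by realizing $E^*$ as an incoherent Hilbert modular Eisenstein series and factoring each coefficient into a product of local Whittaker functions. First I would rewrite Hecke's series in adelic form as the weight-one Eisenstein series attached to the global quasi-character built from $\chi$ and the variables $(\tau_1,\tau_2)$; the completed normalization in the definition of $E^*$ is chosen precisely so that the functional equation relates $s$ to $-s$ with sign $-1$, which is the analytic shadow of the incoherence and forces $E^*(\tau_1,\tau_2,0)=0$. Invariance under the translations $\tau_i\mapsto\tau_i+\sigma_i(b)$ for $b\in\co_F$ shows that the Fourier support lies in $\mathfrak{D}^{-1}$, which already accounts for the vanishing of $a_\alpha$ when $\alpha\notin\mathfrak{D}^{-1}$. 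After unfolding, the $\alpha$-th coefficient factors as
\begin{equation*}
a_\alpha(v_1,v_2,s) = C(s)\cdot W_\infty(\alpha,v_1,v_2,s)\cdot \prod_{\ell} W_\ell(\alpha,s),
\end{equation*}
where $W_\infty$ is a product of two archimedean Whittaker functions (one for each real place of $F$) and each $W_\ell$ is a finite local Whittaker function.

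Next I would analyze the vanishing place by place. Because $K/F$ is everywhere unramified, each finite $W_\ell(\alpha,s)$ is an explicit local density, nonnegative at $s=0$, which vanishes there exactly when some $\mathfrak{p}\mid\ell$ lies in $\Diff(\alpha)$, i.e.\ when $\chi_\mathfrak{p}(\alpha\mathfrak{D})=-1$; for totally positive $\alpha$ the archimedean factor $W_\infty(\alpha,\cdot,0)$ does not vanish. Hence the order of vanishing of $a_\alpha(\cdot,s)$ at $s=0$ equals $|\Diff(\alpha)|$, which has odd cardinality. This gives $a_\alpha=0$ whenever $|\Diff(\alpha)|>1$, since then at least two factors vanish and the leading term survives only in a higher derivative. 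When $\Diff(\alpha)=\{\mathfrak{p}\}$ exactly one factor vanishes, so by the Leibniz rule the central derivative collapses to
\begin{equation*}
a_\alpha = C(0)\cdot W_\infty(\alpha,v_1,v_2,0)\cdot W_\mathfrak{p}'(\alpha,0)\cdot \prod_{\ell\nmid p} W_\ell(\alpha,0).
\end{equation*}

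It then remains to evaluate the surviving pieces. I would first identify each unramified local density with an ideal count: at the good places $W_\ell(\alpha,0)$ computes $\rho_\ell(\alpha\mathfrak{D}\mathfrak{p}^{-1})$, and the non-derivative part of the factor at $p$ computes $\rho_p(\alpha\mathfrak{D}\mathfrak{p}^{-1})$, so that by the product formula \eqref{product ideals} these assemble into $\rho(\alpha\mathfrak{D}\mathfrak{p}^{-1})$. Differentiating the vanishing factor at $\mathfrak{p}$ then contributes $\ord_\mathfrak{p}(\alpha\mathfrak{p}\mathfrak{D})\cdot\log\norm\mathfrak{p}$, where the logarithm comes from differentiating the $\norm\mathfrak{p}^{-s}$-dependence; here $\log\norm\mathfrak{p}=\log p$, because any prime $\mathfrak{p}$ inert in $K/F$ has residue degree one over $\Q$ (the biquadratic structure forces $p$ to split or ramify in $F$ whenever $\mathfrak{p}$ is inert in $K/F$). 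Finally the two weight-one archimedean Whittaker functions must be evaluated at $s=0$; the key point is that for parallel weight one and totally positive index the product $W_\infty(\alpha,v_1,v_2,0)$ is purely holomorphic, so the exponentially decaying incomplete-gamma contributions cancel, leaving a constant independent of $v_1,v_2$ and producing the overall factor $2$.

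The main obstacle will be the last two local computations. The derivative $W_\mathfrak{p}'(\alpha,0)$ must be computed in closed form at a prime that may ramify in $F/\Q$, where the local integral is most delicate and the normalization of $\mathfrak{D}$ enters; and the archimedean evaluation requires showing the cancellation of the non-holomorphic part of the central derivative of a weight-one Whittaker function, which is exactly the phenomenon underlying the diagonal-restriction computation of Gross and Zagier. Once these constants are pinned down, the factorization assembles them into $a_\alpha = 2\,\ord_\mathfrak{p}(\alpha\mathfrak{p}\mathfrak{D})\,\rho(\alpha\mathfrak{D}\mathfrak{p}^{-1})\cdot\log(p)$, independent of $v_1,v_2$; comparison with Theorem \ref{geometric theorem} shows $a_\alpha=4\deg(\mathcal{X}_\alpha)$, the numerical form of the arithmetic Siegel--Weil identity for Hecke's Eisenstein series.
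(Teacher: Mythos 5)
Your proposal follows essentially the same route as the paper's proof: Section~\ref{sect:explicit} also realizes $E^*$ as the incoherent Eisenstein series $E^*(\tau,s,\phi^{\mathcal C})$ (Proposition~\ref{Prop:Eisenstein}), factors $E^*_\alpha$ into local Whittaker functions, uses the Leibniz rule with the vanishing locus $\Diff(\alpha)$ to isolate the single derivative $W^{*,\prime}_{\alpha,\mathfrak p}(1,0,\phi_\mathfrak p^+)=-\tfrac12\ord_\mathfrak p(\alpha\mathfrak p\mathfrak D)\log\norm(\mathfrak p)$, identifies the remaining nonarchimedean values with $\rho(\alpha\mathfrak D\mathfrak p^{-1})$, and invokes the residue-degree-one observation $\norm(\mathfrak p)=p$ of Remark~\ref{Rem:prime description}. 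The two computations you flag as the main obstacles are exactly the ones the paper imports from prior work (Propositions~\ref{prop3.3} and~\ref{prop:infty}, citing \cite{YaValue} and \cite{KRYComp}); note only that in the case $\Diff(\alpha)=\{\mathfrak p\}$ the archimedean factors enter at their central values $-2ie(\sigma_l(\alpha)\tau_l)$, so no incomplete-gamma cancellation is actually needed there.
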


Theorem \ref{theo1.2} is stated in a different form in \cite{GZSingular}, but without proof. We will give a sketch of the
proof  in Section \ref{sect:explicit}. Combining the above theorems we obtain the following.

\begin{BigThm} \label{maintheo}
Assume $\alpha \in F$ is totally positive.  Then $\mathcal  X_{\alpha}$ is a stack of dimension zero and
$$
4 \cdot  \deg ( \mathcal X_{\alpha} )=   a_{\alpha}
$$
where $a_\alpha$ is the $\alpha$-th Fourier coefficient of $E^{*,\prime}(\tau_1, \tau_2, 0)$.
\end{BigThm}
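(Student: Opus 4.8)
The plan is to obtain Theorem \ref{maintheo} as a purely formal consequence of the two preceding theorems, matching their output formulas coefficient by coefficient. All of the geometric input (the evaluation of $\deg(\mathcal{X}_\alpha)$) is supplied by Theorem \ref{geometric theorem}, and all of the analytic input (the evaluation of $a_\alpha$) is supplied by Theorem \ref{theo1.2}; the remaining task is only to observe that these two computations share an identical dichotomy and differ exactly by the universal factor of $4$ appearing in the statement. I would organize the argument around the two cases distinguished in those theorems, and establish the dimension-zero assertion alongside the numerical identity.

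First I would treat the main case: $\alpha\in\mathfrak{D}^{-1}$ with $\Diff(\alpha)=\{\mathfrak{p}\}$ a singleton. Here Theorem \ref{geometric theorem} asserts that $\mathcal{X}_\alpha$ has dimension zero and supplies
$$
\deg(\mathcal{X}_\alpha) = \frac{1}{2}\,\ord_{\mathfrak{p}}(\alpha\mathfrak{p}\mathfrak{D})\,\rho(\alpha\mathfrak{D}\mathfrak{p}^{-1})\cdot\log(p),
$$
while Theorem \ref{theo1.2} supplies
$$
a_\alpha = 2\,\ord_{\mathfrak{p}}(\alpha\mathfrak{p}\mathfrak{D})\,\rho(\alpha\mathfrak{D}\mathfrak{p}^{-1})\cdot\log(p).
$$
Multiplying the first identity by $4$ produces precisely the right-hand side of the second, so $4\cdot\deg(\mathcal{X}_\alpha)=a_\alpha$ in this case.

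Next I would treat the complementary case: $\alpha\notin\mathfrak{D}^{-1}$, or $|\Diff(\alpha)|>1$ (recall from the introduction that $|\Diff(\alpha)|$ is always odd, so the latter condition means $|\Diff(\alpha)|\ge 3$). Theorem \ref{geometric theorem} then gives $\mathcal{X}_\alpha=\emptyset$, whence the defining sum \eqref{arakelov degree} is empty and $\deg(\mathcal{X}_\alpha)=0$, while the empty stack is vacuously of dimension zero. On the analytic side Theorem \ref{theo1.2} gives $a_\alpha=0$, so again $4\cdot\deg(\mathcal{X}_\alpha)=0=a_\alpha$. Since every totally positive $\alpha$ lies in exactly one of these two cases, this yields the identity for all such $\alpha$ and, together with the dimension-zero statements, completes the proof.

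As for the main obstacle: there is essentially none at this stage, precisely because the difficulty has been absorbed into Theorems \ref{geometric theorem} and \ref{theo1.2}. The only points demanding any care are bookkeeping: verifying that the support of $\deg(\mathcal{X}_\alpha)$ and the support of $a_\alpha$ are cut out by the identical condition ($\alpha\in\mathfrak{D}^{-1}$ and $|\Diff(\alpha)|=1$), and confirming that the empty stack is counted with dimension zero so that the first assertion of the theorem holds uniformly in $\alpha$. Both are immediate from the way the two theorems are phrased, and the coincidence of the two numerical formulas up to the factor $4$ is the substantive content already established upstream.
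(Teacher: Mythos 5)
Your proposal is correct and is exactly the paper's own (primary) proof: the authors state Theorem \ref{maintheo} "follows immediately from Theorems \ref{geometric theorem} and \ref{theo1.2}," i.e.\ the same case split on whether $\alpha\in\mathfrak{D}^{-1}$ and $\Diff(\alpha)$ is a singleton, with the factor $4\cdot\frac{1}{2}=2$ matching the two formulas. (The paper additionally offers a second, Siegel--Weil--based proof in Section \ref{sect:concept}, but that is an alternative to, not part of, the argument you were asked to supply.)
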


In Section  \ref{sect:concept} we    give a slightly different and more
conceptual proof of Theorem \ref{maintheo}, based on the Siegel-Weil formula, which we now outline.
Fix a totally positive $\alpha\in F$. Assume that $\alpha \in \mathfrak D^{-1}$ and 
$\Diff(\alpha) =\{ \mathfrak p\}$ (otherwise both sides of the equality of  Theorem \ref{maintheo} are equal to zero).
Let $p$ be the rational prime lying below $\mathfrak p$, so that  $\mathcal X_\alpha$ is supported in characteristic $p$.
It is proved in Theorem \ref{Thm:local length} that
$$
\length( \co^\mathrm{sh}_{  \mathcal{X}_\alpha, x  } )=\nu_\mathfrak p(\alpha)
$$
for some  explicit number $\nu_\mathfrak p(\alpha)$ independent of  $x$, and so
 $$
 \deg(\mathcal X_\alpha) = \nu_\mathfrak p(\alpha) \log (p) \sum_{x\in [\mathcal{X}_\alpha(\F_p^\alg)] }
e_x^{-1} .
 $$
Next, applying the Siegel-Weil formula, one can prove that the summation on the right is, up to a factor of $1/4$,
the  $\alpha$-th Fourier coefficient of the value at $s=0$ of a coherent (in the sense of Kudla \cite{KuAnnals})
Eisenstein series $E_\alpha^*(\tau, s, \phi^{(\mathfrak p)})$;
see Proposition \ref{Prop:counting} and the argument at the end of the paper.  Therefore
$$
 \deg(\mathcal X_\alpha) \cdot q^\alpha =\frac{1}{4}  \nu_\mathfrak p(\alpha) \log (p) \cdot  E_\alpha^*(\tau, 0, \phi^{(\mathfrak p)}).
$$
On the other hand, $\Diff(\alpha) =\{\mathfrak p\}$ implies
$$
E_\alpha^{*, \prime}(\tau, 0) =
\frac{W_{\alpha, \mathfrak p}^{*, \prime}(1, 0)}{W_{\alpha, \mathfrak p}^{*}(1, 0, \phi_{\mathfrak p}^{(\mathfrak p)})}
E_\alpha^*(\tau, 0, \phi^{(\mathfrak p)}),
$$
where $W_{\alpha, \mathfrak p}^*(1, s)$ is the local Whittaker function of $E^*(\tau, s)$ at $\mathfrak p$,
and similarly for $W_{\alpha, \mathfrak p}^*(1,s, \phi^{(\mathfrak p)})$. Finally, explicit calculation shows
(see Proposition \ref{lem3.11} and the argument at the end of the paper)
$$
\frac{W_{\alpha, \mathfrak p}^{*, \prime}(1, 0)}{W_{\alpha, \mathfrak p}^{*}(1, 0, \phi_{\mathfrak p}^{(\mathfrak p)})}
=\nu_\mathfrak p(\alpha) \log(p).
$$
Therefore
$$
E_\alpha^{*, \prime}(\tau, 0) = \nu_\mathfrak p(\alpha) \log p \cdot E_\alpha^*(\tau, 0, \phi^{(\mathfrak p)})
 =4 \deg(\mathcal  X_\alpha)  \cdot q^\alpha .
$$
This gives a proof of  Theorem \ref{maintheo} without explicitly counting the number of points in
$[\mathcal{X}_\alpha(\F_p^\alg)]$, and without explicitly computing the Fourier coefficient $a_\alpha$.

By  Theorem \ref{maintheo}, one sees that the generating function
\begin{equation*}
\phi(\tau) =\sum_{ \substack{ \alpha \in \mathfrak D^{-1} \\ \alpha \gg 0  }} \deg (\mathcal
X_\alpha)  \cdot q^\alpha
\end{equation*}
is the holomorphic part of a non-holomorphic  Hilbert modular form
of weight $1$ for $\SL_2(\co_F)$, namely $E^{*, \prime}(\tau_1, \tau_2,
0)$. One can also view the theorem as an arithmetic Siegel-Weil
formula in the sense of  \cite{KuMSRI} and \cite{KRYBook}---giving
an arithmetic interpretation of the central derivative of the
incoherent Eisenstein  series $E^*(\tau_1, \tau_2, s)$.

We now explain in what sense Theorem \ref{geometric theorem} is a  refinement of the
earlier work of Gross and Zagier on singular moduli.   For a positive integer $m$ let $\mathcal T_m$ be the
algebraic stack representing the functor that assigns to a scheme $S$ the category of all
triples $(\mathbf E_1, \mathbf E_2, j)$ where $(\mathbf E_1, \mathbf E_2)$ is an object of
$\mathcal X(S) $  and $j \in\Hom(E_1,E_2)$ satisfies $\deg (j) =m$.  Directly from the moduli problems we have
$$
\mathcal T_m = \bigsqcup_{\substack{\alpha \in F \\ \tr_{F/\mathbb Q}
(\alpha) =m}} \mathcal X_\alpha.
$$
Combining this decomposition with the formula for $\deg(\mathcal{X}_\alpha)$ of Theorem \ref{geometric theorem}
one finds (see Corollary \ref{Cor:GZ}) a formula for $\deg(\mathcal{T}_m)$.  This formula is precisely the main
result of \cite{GZSingular}.

\begin{BigCor}[Gross-Zagier] \label{cor1.2}
For any positive integer $m$ we have
$$
\deg (\mathcal T_m)  = \frac{1}2 \sum_{\substack{ \alpha \in \mathfrak
D^{-1} \\ \tr_{F/\mathbb Q} (\alpha) =m \\ a \gg 0}} \sum_{p} \log p
\sum_{\mathfrak p| p} \ord_\mathfrak p (\alpha \mathfrak D \mathfrak
p) \rho(\alpha \mathfrak D \mathfrak p^{-1})
$$
where the middle summation is over those rational primes $p$ that are nonsplit in both $K_1$ and $K_2$.
\end{BigCor}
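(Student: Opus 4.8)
The plan is to read the formula off from the disjoint-union decomposition $\mathcal T_m=\bigsqcup_{\tr_{F/\Q}(\alpha)=m}\mathcal X_\alpha$ already recorded in the text, so that nothing remains but a local bookkeeping computation. First I would note that the Arakelov degree \eqref{arakelov degree} is additive over disjoint unions, hence $\deg(\mathcal T_m)=\sum_{\alpha}\deg(\mathcal X_\alpha)$, the sum being over totally positive $\alpha\in F$ with $\tr_{F/\Q}(\alpha)=m$ (only such $\alpha$ can contribute, since $\mathcal X_\alpha=\emptyset$ otherwise). Theorem~\ref{geometric theorem} then evaluates each summand: $\deg(\mathcal X_\alpha)=0$ unless $\alpha\in\mathfrak D^{-1}$ and $\Diff(\alpha)=\{\mathfrak p\}$ is a singleton, in which case $\deg(\mathcal X_\alpha)=\tfrac12\,\ord_{\mathfrak p}(\alpha\mathfrak p\mathfrak D)\,\rho(\alpha\mathfrak D\mathfrak p^{-1})\log p$. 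Thus it suffices to show, for each fixed totally positive $\alpha\in\mathfrak D^{-1}$ with $\tr_{F/\Q}(\alpha)=m$, that the inner expression $\sum_{p}\log p\sum_{\mathfrak p\mid p}\ord_{\mathfrak p}(\alpha\mathfrak D\mathfrak p)\,\rho(\alpha\mathfrak D\mathfrak p^{-1})$ appearing on the right-hand side equals $2\deg(\mathcal X_\alpha)$.

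The heart of the argument is the vanishing claim: for a prime $\mathfrak p$ of $F$, the product $\ord_{\mathfrak p}(\alpha\mathfrak D\mathfrak p)\,\rho(\alpha\mathfrak D\mathfrak p^{-1})$ is nonzero if and only if $\Diff(\alpha)=\{\mathfrak p\}$. To prove it I would use the product formula \eqref{product ideals} together with the fact that, because $K/F$ is everywhere unramified at finite places, the local norm map forces $\rho_\ell(\mathfrak b)\neq 0$ exactly when $\mathfrak b_\ell$ is integral and $\ord_{\mathfrak q}(\mathfrak b)$ is even at every prime $\mathfrak q\mid\ell$ that is inert in $K$ (split primes impose no parity condition). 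Since $\alpha\in\mathfrak D^{-1}$ the ideal $\alpha\mathfrak D$ is integral, so $\alpha\mathfrak D\mathfrak p^{-1}$ is integral precisely when $\ord_{\mathfrak p}(\alpha\mathfrak D)\geq 1$, while the factor $\ord_{\mathfrak p}(\alpha\mathfrak D\mathfrak p)=\ord_{\mathfrak p}(\alpha\mathfrak D)+1$ is automatically positive. Translating the parity conditions through $\chi_{\mathfrak q}(\varpi_{\mathfrak q})=-1$ for inert $\mathfrak q$ identifies ``$\ord_{\mathfrak q}$ odd'' with ``$\mathfrak q\in\Diff(\alpha)$'', and removing the single factor $\mathfrak p$ flips the parity only at $\mathfrak p$. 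Hence $\rho(\alpha\mathfrak D\mathfrak p^{-1})\neq 0$ forces $\Diff(\alpha)\subseteq\{\mathfrak p\}$, and conversely $\Diff(\alpha)=\{\mathfrak p\}$ makes all the conditions hold. Because $\Diff(\alpha)$ always has odd, in particular nonzero, cardinality, $\Diff(\alpha)\subseteq\{\mathfrak p\}$ is equivalent to $\Diff(\alpha)=\{\mathfrak p\}$, which proves the claim.

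Granting this, the inner double sum has at most one nonzero term, occurring exactly when $\Diff(\alpha)=\{\mathfrak p\}$, and it then equals $\ord_{\mathfrak p}(\alpha\mathfrak D\mathfrak p)\,\rho(\alpha\mathfrak D\mathfrak p^{-1})\log p=2\deg(\mathcal X_\alpha)$ by Theorem~\ref{geometric theorem}; when $|\Diff(\alpha)|>1$ both this sum and $\deg(\mathcal X_\alpha)$ vanish. Summing over $\alpha$ and extracting the factor $\tfrac12$ then yields $\deg(\mathcal T_m)$. It remains to reconcile the indexing condition that $p$ be nonsplit in both $K_1$ and $K_2$: for this I would invoke the base-change identity $\chi=\chi_{d_i}\circ N_{F/\Q}$, valid for $i=1,2$ since $K=F(\sqrt{d_i})$, which shows that a prime $\mathfrak p$ inert in $K$---the only primes that can lie in $\Diff(\alpha)$---satisfies $\chi_{d_i}(N_{F/\Q}\mathfrak p)=-1$, forcing the rational prime $p$ below $\mathfrak p$ to be inert, hence nonsplit, in each $K_i$. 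Thus every nonzero term is supported on such $p$, and enlarging the range to all $p$ nonsplit in both fields merely adds terms that already vanish. The only genuinely delicate point is the local norm-counting input above---establishing that $\rho_\ell(\mathfrak b)\neq 0$ is equivalent to even order at the inert primes, and carrying the $\chi$-translation through correctly---while the remainder is additivity and bookkeeping.
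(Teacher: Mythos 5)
Your proposal is correct and takes essentially the same route as the paper: the paper likewise combines the decomposition (\ref{moduli decomp}) with the degree formula of Theorem \ref{Thm:final formula} (packaged as Theorem \ref{Thm:singular moduli}), using the same parity argument---$\rho(\alpha\mathfrak{D}\mathfrak{p}^{-1})=0$ unless $\ord_{\mathfrak{q}}(\alpha\mathfrak{D}\mathfrak{p}^{-1})$ is even at every inert $\mathfrak{q}$---to collapse the inner sum to the single term with $\Diff(\alpha)=\{\mathfrak{p}\}$. The only cosmetic difference is that you upgrade $\Diff(\alpha)\subseteq\{\mathfrak{p}\}$ to equality via the odd cardinality of $\Diff(\alpha)$, whereas the paper argues directly with two distinct primes of $\Diff(\alpha)$; the content is identical.
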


This work grew out of the authors' attempts to understand Gross and Zagier's work on singular moduli
from the perspective of Kudla's  program \cite{KuAnnals, KuMSRI, KRYBook} to relate
arithmetic intersection multiplicities on   Shimura varieties of orthogonal and unitary type to the
Fourier coefficients of derivatives of Eisenstein series.
On the occasion of his sixtieth birthday the authors wish to express  to Steve Kudla both  their  deepest appreciation
 for his beautiful mathematics, and  their gratitude for his  influence on their own lives and work.


\section{Moduli spaces of CM elliptic curves} \label{sect2}


In this section we study the moduli stack $\mathcal{X}_\alpha$ and prove Theorem
\ref{geometric theorem}.


\subsection{CM pairs}


Let $S$ be a scheme  and $R$ an order in a quadratic imaginary field.   An \emph{elliptic curve over $S$ with
complex multiplication  by $R$} is a pair $\mathbf{E}=(E,\kappa)$ in which $E\map{}S$ is an elliptic curve and
$\kappa:R\map{}\End(E)$ is an action of $R$ on $E$.

\begin{Def}\label{Def 2.1.1}
 A \emph{CM pair} over a scheme $S$ is a pair $(\mathbf{E}_1,\mathbf{E}_2)$ in which
 $\mathbf{E}_1$ and $\mathbf{E}_2$ are elliptic curves over $S$  with complex multiplication by
 $\co_{K_1}$ and $\co_{K_2}$, respectively.   An \emph{isomorphism}  between CM pairs
 $(\mathbf{E}'_1,\mathbf{E}'_2) \map{} (\mathbf{E}_1,\mathbf{E}_2)$ is a pair $(f_1,f_2)$ in which each
 $f_i:E'_i\map{}E_i$ is an $\co_{K_i}$-linear isomorphism of elliptic curves.
\end{Def}

For every CM pair  $(\mathbf{E}_1,\mathbf{E}_2)$ over a scheme $S$ we abbreviate
$$
L(\mathbf{E}_1,\mathbf{E}_2)  = \Hom(E_1,E_2),
$$
where  $\Hom$ means homomorphisms between elliptic curves over $S$ in the usual sense, and set
$$
V(\mathbf{E}_1,\mathbf{E}_2) = L(\mathbf{E}_1,\mathbf{E}_2) \otimes_\Z\Q.
$$
Assuming that $S$ is connected,  the finite rank $\Z$-module $L(\mathbf{E}_1,\mathbf{E}_2)$
is equipped with the positive definite quadratic form $\deg(j)$. Denote by
\begin{align*}
[j_1,j_2] &=  \deg(j_1+j_2) - \deg(j_1) - \deg(j_2)   \\
&=  j_1^\vee \circ j_2 +j_2^\vee \circ j_1
\end{align*}
the associated bilinear form.   The maximal order $\co_K=\co_{K_1} \otimes_\Z \co_{K_2}$
acts on the $\Z$-module  $L(\mathbf{E}_1,\mathbf{E}_2)$  by
\begin{equation}\label{K action}
(x_1\otimes x_2) \action j = \kappa_2(x_2) \circ j \circ \kappa_1( \overline{x}_1).
\end{equation}

By a  \emph{$K$-Hermitian} form on $V(\mathbf{E}_1,\mathbf{E}_2)$ we mean  a function
$$
\langle \cdot, \cdot \rangle  : V(\mathbf{E}_1,\mathbf{E}_2)  \times V(\mathbf{E}_1,\mathbf{E}_2) \map{}K
$$
which is $K$-linear in the first variable and satisfies
$\langle  j_1,j_2\rangle =  \overline{ \langle j_2,j_1\rangle }$.

\begin{Prop}\label{Prop 2.1.3}\
\begin{enumerate}
\item
There is a unique $F$-bilinear form $[j_1,j_2]_\CM$ on $V(\mathbf{E}_1,\mathbf{E}_2)$ satisfying
$$
[j_1,j_2] = \mathrm{Tr}_{F/\Q} \ [j_1,j_2]_\CM.
$$
\item
The $F$-quadratic form
$$
\cmdeg(j)=\frac{1}{2} \cdot [j,j]_\CM
$$
 is the unique $F$-quadratic form on $V(\mathbf{E}_1,\mathbf{E}_2)$ satisfying
$$
\deg(j) = \mathrm{Tr}_{F/\Q}\ \cmdeg(j).
$$
\item
There is a unique $K$-Hermitian form $\langle j_1,j_2\rangle_\CM$ on
$V(\mathbf{E}_1,\mathbf{E}_2)$  satisfying
$$
[j_1,j_2]_\CM = \mathrm{Tr}_{K/F} \ \langle j_1,j_2\rangle_\CM.
$$
\end{enumerate}
\end{Prop}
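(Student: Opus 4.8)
All three assertions are instances of a single mechanism: a form valued in a small ring is recovered from its trace via the nondegeneracy of the trace pairing of a separable quadratic extension. The plan is to isolate that descent step once and apply it twice (for $F/\Q$ and for $K/F$), deducing part (2) formally in between. The two structural facts I will lean on throughout are the symmetry of $[\cdot,\cdot]$ and the adjunction $[t\action j_1,j_2]=[j_1,\bar t\action j_2]$ recorded in the introduction, valid for $t\in\co_K$ and hence for all $t\in K$ after $\otimes_\Z\Q$. Since $F$ is exactly the subfield of $K$ fixed by complex conjugation, for $t\in F$ the adjunction reads $[t\action j_1,j_2]=[j_1,t\action j_2]$; that is, the $F$-action on $V(\mathbf E_1,\mathbf E_2)$ is self-adjoint for $[\cdot,\cdot]$.

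For (1), I would use that $(a,b)\mapsto\tr_{F/\Q}(ab)$ is a perfect $\Q$-pairing on $F$. Fixing $j_1,j_2$, the functional $F\map{}\Q$, $t\mapsto[t\action j_1,j_2]$ is $\Q$-linear, so there is a unique $[j_1,j_2]_\CM\in F$ with $[t\action j_1,j_2]=\tr_{F/\Q}(t\cdot[j_1,j_2]_\CM)$ for all $t\in F$; setting $t=1$ gives $[j_1,j_2]=\tr_{F/\Q}[j_1,j_2]_\CM$. Then $F$-linearity in the first variable follows by applying uniqueness to $[(st)\action j_1,j_2]$; symmetry of $[\cdot,\cdot]_\CM$ follows by combining the symmetry of $[\cdot,\cdot]$ with self-adjointness of the $F$-action; and $F$-linearity in the second variable is then automatic. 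Uniqueness is built into the construction.

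Part (2) is then formal. Because $[\cdot,\cdot]_\CM$ is symmetric $F$-bilinear, $\cmdeg(j)=\tfrac12[j,j]_\CM$ is its associated quadratic form, and $\tr_{F/\Q}\cmdeg(j)=\tfrac12[j,j]=\deg(j)$ since $[j,j]=\deg(2j)-2\deg(j)=2\deg(j)$. Conversely, any $F$-quadratic form $Q$ with $\tr_{F/\Q}Q=\deg$ has polarization $B_Q$ satisfying $\tr_{F/\Q}B_Q=[\cdot,\cdot]$, so $B_Q=[\cdot,\cdot]_\CM$ by the uniqueness in (1), forcing $Q=\cmdeg$.

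For (3) I would rerun the descent one level up, for $K/F$ with $\tr_{K/F}(z)=z+\bar z$ and complex conjugation as the nontrivial element of $\Gal(K/F)$. First the adjunction must be pushed down to $[\cdot,\cdot]_\CM$: for $t\in K$ and $s\in F$, $\tr_{F/\Q}(s[t\action j_1,j_2]_\CM)=[(st)\action j_1,j_2]=[j_1,\overline{st}\action j_2]=\tr_{F/\Q}(s[j_1,\bar t\action j_2]_\CM)$, so $[t\action j_1,j_2]_\CM=[j_1,\bar t\action j_2]_\CM$ by nondegeneracy. Using the perfect pairing $(a,b)\mapsto\tr_{K/F}(ab)$, I define $\langle j_1,j_2\rangle_\CM\in K$ as the unique element with $[t\action j_1,j_2]_\CM=\tr_{K/F}(t\langle j_1,j_2\rangle_\CM)$ for all $t\in K$; $K$-linearity in the first variable again comes from uniqueness. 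The one genuinely new point is Hermitian symmetry, and this is where I expect to spend the most care: comparing $\tr_{K/F}(t\langle j_1,j_2\rangle_\CM)=[t\action j_1,j_2]_\CM=[j_2,t\action j_1]_\CM=[\bar t\action j_2,j_1]_\CM=\tr_{K/F}(\bar t\langle j_2,j_1\rangle_\CM)$ and expanding each trace as $z+\bar z$ gives $\tr_{K/F}(t(\langle j_1,j_2\rangle_\CM-\overline{\langle j_2,j_1\rangle_\CM}))=0$ for all $t$, whence $\langle j_1,j_2\rangle_\CM=\overline{\langle j_2,j_1\rangle_\CM}$ by nondegeneracy. The main obstacle is thus purely organizational — correctly tracking how $F$- and $K$-scalars interact with the $\action$-action so that the adjunction relation is legitimately inherited at the $K/F$ stage; once that is in hand, everything is a mechanical consequence of nondegeneracy of the two trace forms.
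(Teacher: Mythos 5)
Your proposal is correct and follows essentially the same route as the paper: prove (or invoke) the adjunction $[t\action j_1,j_2]=[j_1,\overline{t}\action j_2]$ and then descend through the nondegenerate trace pairings of $F/\Q$ and $K/F$ — indeed the paper compresses all of your descent and uniqueness steps into the single remark that for a finite separable extension $M/L$ the trace induces an isomorphism $\Hom_M(V,M)\map{}\Hom_L(V,L)$, so your write-up is if anything more detailed at that stage. The one fact you quote from the introduction rather than verify — the adjunction identity itself — is precisely the only thing the paper's proof actually computes (by conjugating $[x\action j_1,j_2]$ by $\kappa_1(x_1)$, which is legitimate because the bracket is scalar-valued in $\End(E_1)$, together with $\kappa_i(t)^\vee=\kappa_i(\overline{t})$), so for a fully self-contained argument you should include that short verification.
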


\begin{proof}
Suppose $j_1,j_2\in V(\mathbf{E}_1,\mathbf{E}_2)$.
If $x=x_1\otimes x_2\in K$ is nonzero then as elements of $\End(E_1)$ we have
\begin{eqnarray*}
[ x \action j_1 ,  j_2]
& = &
 \kappa_1(x_1)^{-1} \circ [ x \action j_1 ,  j_2] \circ  \kappa_1(x_1)   \\
& = &
 j_1^\vee \circ \kappa_2(\overline{x}_2)  \circ j_2 \circ  \kappa_1(x_1)
 + \kappa_1(\overline{x}_1) \circ j_2^\vee \circ \kappa_2( x_2) \circ j_1  \\
& = &[ j_1, \overline{x} \action j_2 ].
\end{eqnarray*}
Thus for all $x\in K$ we have $[ x \action j_1 ,  j_2]  = [ j_1, \overline{x} \action j_2 ]$.
All of the claims now follow from this property and some elementary linear algebra; in particular from the
fact that if $M/L$ is a finite separable extension of fields, then for any finite dimensional $M$-vector space
$V$ the trace $\mathrm{Tr}_{M/L}$ induces an isomorphism  $\Hom_M(V,M)\map{}\Hom_L(V,L).$
\end{proof}

Thus the complex multiplication  of $\mathbf{E}_1$ and $\mathbf{E}_2$ endows the set
$V(\mathbf{E}_1,\mathbf{E}_2)$ not only with a $K$-action, but with an $F$-quadratic form $\cmdeg$
which refines the usual notion of degree.

To understand the moduli space of CM pairs over schemes we use the
language of stacks and  groupoids as in \cite{vistoli}. Given a CM
pair $(\mathbf{E}_1,\mathbf{E}_2)$ over a scheme $S$ and a
morphism of schemes $T\map{}S$ there is an evident notion of the
pullback CM pair $(\mathbf{E}_1,\mathbf{E}_2)_{/T}$.

\begin{Def}
Define  $\mathcal{X}$ to be the category   whose objects are CM pairs over
schemes.   In the category $\mathcal{X}$ an arrow
$(\mathbf{E}'_1,\mathbf{E}'_2) \map{} (\mathbf{E}_1,\mathbf{E}_2)$
between CM pairs defined over schemes $T$ and $S$, respectively,
is a morphism of schemes $T\map{}S$  together with an isomorphism
(in the sense of Definition \ref{Def 2.1.1}) of CM pairs over $T$
$$
(\mathbf{E}'_1,\mathbf{E}'_2) \iso (\mathbf{E}_1,\mathbf{E}_2)_{/T}.
$$
\end{Def}

Thus $\mathcal{X}$ is a category fibered in groupoids over the category of schemes.
For a scheme $S$ the fiber $\mathcal{X}(S)$ is  the category of CM pairs over $S$,
and arrows in this category are isomorphisms in the sense of Definition \ref{Def 2.1.1}.

\begin{Def}
For every  $m\in \Q$ define $\mathcal{T}_m$ to be the category,
fibered in groupoids over schemes,  of triples $(\mathbf{E}_1,\mathbf{E}_2,j)$ in which
$(\mathbf{E}_1,\mathbf{E}_2)$ is a CM pair over a scheme $S$ and $j\in L(\mathbf{E}_1,\mathbf{E}_2)$
satisfies $\deg(j)=m$ on every connected component of $S$.
\end{Def}

\begin{Def}
For every  $\alpha\in F$ define
$\mathcal{X}_\alpha$ to be the category of triples $(\mathbf{E}_1,\mathbf{E}_2,j)$ in which
$(\mathbf{E}_1,\mathbf{E}_2)$ is a CM pair over a scheme and $j\in L(\mathbf{E}_1,\mathbf{E}_2)$
satisfies $\cmdeg(j)=\alpha$ on every connected component of $S$.
\end{Def}

The categories
$\mathcal{X}$, $\mathcal{T}_m$, and $\mathcal{X}_\alpha$ are algebraic  stacks  in the  sense of \cite{vistoli}
(also known as Deligne-Mumford stacks)  of finite type over $\Spec(\Z)$.  Briefly, one knows  that the category
$\mathcal{E}$ of elliptic curves over schemes is an algebraic stack of finite type over $\Spec(\Z)$,
and the relative representability of each of $\mathcal{X}$, $\mathcal{T}_m$, and $\mathcal{X}_\alpha$ over
$\mathcal{E}\times_\Z\mathcal{E}$ is proved using the methods and results of \cite[Chapter 6]{hida04}.
 For every $m\in \Q$ there is an evident  decomposition
\begin{equation}\label{moduli decomp}
\mathcal{T}_m = \bigsqcup _{\substack{  \alpha\in F  \\ \mathrm{Tr}_{F/\Q}(\alpha) = m  } } \mathcal{X}_\alpha .
\end{equation}

If   $S$ is a  scheme  and $\mathcal{C}$ is any one of $\mathcal{X}$, $\mathcal{T}_m$, or
$\mathcal{X}_\alpha$  then  $[\mathcal{C}(S)]$ denotes the set of isomorphism classes of objects in the
category $\mathcal{C}(S)$.


\subsection{The support of $\mathcal{X}_\alpha$}


Given $\alpha\in F^\times$ define a  nondegenerate $\Q$-quadratic form  $Q_\alpha$ on $K$ by
$$
Q_\alpha(x)=\mathrm{Tr}_{F/\Q}(\alpha x \overline{x}).
$$
For each place $\ell\le \infty$ of $\Q$ let $\mathrm{hasse}_\ell(\cdot)$ be the Hasse invariant  on
$\Q_\ell$-quadratic spaces (the invariant $\epsilon$ of \cite[Chapter IV.2]{serre73}) and let $(\cdot,\cdot)_\ell$ be the usual
Hilbert symbol $\Q_\ell^\times \times\Q_\ell^\times \map{}\{\pm 1\}$.  Define the \emph{local invariant}  $\inv_\ell(\alpha)=\pm 1$ by
$$
\inv_\ell(\alpha) = \mathrm{hasse}_\ell(K_\ell, Q_\alpha)\cdot  (-1,-1)_\ell.
$$
Define  the   \emph{modified local invariant} of $\alpha$ by
$$
\inv^*_\ell (\alpha)=\left\{\begin{array}{rl}
\inv_\ell (\alpha)&\mathrm{if\ } \ell <\infty \\
-\inv_\ell (\alpha)&\mathrm{if\ } \ell =\infty.
\end{array}\right.
$$
Define the \emph{support} of $\alpha$ to be the finite set of rational primes
$$
\Sppt(\alpha)=\{ \ell \le \infty : \inv^*_\ell(\alpha)=-1 \},
$$
and note that  the product formula $\prod_{\ell \le \infty}\inv_\ell(\alpha)=1$ implies that
$\Sppt(\alpha)$ has odd cardinality.

 A CM pair $(\mathbf{E}_1,\mathbf{E}_2)$ over an algebraically closed  field $k$ of nonzero characteristic
 is  \emph{supersingular} if the  underlying elliptic curves  $E_1$ and $E_2$ are supersingular in
 the usual sense.  If this is the case then $E_1$ and $E_2$ are isogenous, and there is an isomorphism of
 $\Q$-vector spaces $V(\mathbf{E}_1,\mathbf{E}_2) \iso \End(E)\otimes_\Z\Q$ for any supersingular
 elliptic curve $E$ over $k$.  In particular $V(\mathbf{E}_1,\mathbf{E}_2)$ has dimension four
 as a $\Q$-vector space, and so has dimension  one over $K$.  Hermitian $K$-modules of
 dimension one are easy to classify, and we find that  for some
$\beta\in F^\times$ there is an isomorphism of $F$-quadratic spaces
\begin{equation}\label{basic hermitian}
 ( V(\mathbf{E}_1,\mathbf{E}_2), \cmdeg ) \iso (K, \beta \cdot \mathrm{Nm}_{K/F} ).
\end{equation}
The element $\beta$ is determined up to multiplication by a norm from $K^\times$, and as the $\Q$-quadratic
form $\deg$ is positive definite,  $\beta$ is totally positive.   In the next subsection we will compute the image of
$\beta$ in $\widehat{F}^\times/\mathrm{Nm}_{K/F}(\widehat{K}^\times)$,  which is enough to
determine the isomorphism class of the quadratic space (\ref{basic hermitian}); see
 Theorem \ref{Thm:quadratic model} below.

\begin{Prop}\label{Prop 2.2.2}
Suppose $k$ is an algebraically closed field of characteristic $p\ge 0$ and $\alpha\in F^\times$.  If
$(\mathbf{E}_1,\mathbf{E}_2,j) \in \mathcal{X}_\alpha(k)$ then
\begin{enumerate}
\item
$p>0$ and the CM pair  $(\mathbf{E}_1,\mathbf{E}_2)$ is supersingular;
\item
$p$ is nonsplit in both $K_1$ and $K_2$;
\item
there are isomorphisms of  quadratic spaces over $\Q$
$$
(K,Q_\alpha) \iso (V(\mathbf{E}_1,\mathbf{E}_2), \deg) \iso (H,\mathrm{Nm})
$$
where $H$ is the rational quaternion algebra over $\Q$ of discriminant $p$ and
$\mathrm{Nm}$ is the reduced norm on $H$;
\item
  $\Sppt(\alpha)=\{p\}$ and $\alpha$ is totally positive.
\end{enumerate}
\end{Prop}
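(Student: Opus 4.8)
The plan is to establish the four assertions in sequence, bootstrapping each from the previous ones and from the structure theory of supersingular elliptic curves.

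\textbf{Parts (1) and (2).} Since $\alpha\in F^\times$, the relation $\cmdeg(j)=\alpha\neq 0$ forces $j\neq 0$, so $j$ is a nonzero isogeny and $E_1$ is isogenous to $E_2$; in particular $\End(E_1)\otimes_\Z\Q\iso\End(E_2)\otimes_\Z\Q$. If $p=0$, or if $p>0$ and the $E_i$ are ordinary, then each $\End(E_i)\otimes_\Z\Q$ is an imaginary quadratic field containing $K_i$ via $\kappa_i$, hence equal to $K_i$; the isomorphism of endomorphism algebras would then give $K_1\iso K_2$, contradicting $\gcd(d_1,d_2)=1$. Therefore $p>0$ and both $E_i$ are supersingular, which is (1). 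For (2) I would use that for a supersingular $E$ over $k$ the algebra $\End(E)\otimes_\Z\Q$ is the definite quaternion algebra $B_{p,\infty}$ ramified exactly at $p$ and $\infty$. The embedding $\kappa_i$ gives $K_i\hookrightarrow B_{p,\infty}$, and an imaginary quadratic field embeds into $B_{p,\infty}$ precisely when no ramified place of $B_{p,\infty}$ splits in it; as $\infty$ is automatically nonsplit, this reduces to $p$ being nonsplit in $K_i$.

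\textbf{Part (3).} Here I would invoke (\ref{basic hermitian}): since $(\mathbf E_1,\mathbf E_2)$ is supersingular, $V(\mathbf E_1,\mathbf E_2)$ is one-dimensional over $K$ and there is an isometry $(V,\cmdeg)\iso(K,\beta\cdot\mathrm{Nm}_{K/F})$ for some totally positive $\beta$. Tracking the image $x_0$ of $j$ gives $\alpha=\beta\,\mathrm{Nm}_{K/F}(x_0)$, so multiplication by $x_0$ is an $F$-isometry $(K,\alpha\,\mathrm{Nm}_{K/F})\iso(K,\beta\,\mathrm{Nm}_{K/F})$; applying $\tr_{F/\Q}$ and using $\deg=\tr_{F/\Q}\circ\cmdeg$ yields the first isomorphism $(K,Q_\alpha)\iso(V,\deg)$. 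For the second, I would fix a supersingular $E/k$ with isogenies $\phi_i:E\to E_i$ and transport via $j\mapsto\phi_2^\vee\circ j\circ\phi_1$, identifying $V$ with $\End(E)\otimes_\Z\Q=B_{p,\infty}$ and scaling $\deg$ by the positive rational $N=\deg(\phi_1)\deg(\phi_2)$. Because the reduced norm of a definite quaternion algebra represents every positive rational, $N^{-1}$ is a value of $\mathrm{Nm}$, and the quaternion norm form, being a two-fold Pfister form, is round; hence $N^{-1}\mathrm{Nm}\iso\mathrm{Nm}$ and $(V,\deg)\iso(H,\mathrm{Nm})$ with $H=B_{p,\infty}$.

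\textbf{Part (4).} Total positivity of $\alpha$ is immediate: $\cmdeg$ is totally positive definite (Proposition \ref{Prop 2.1.3} and the discussion following it) and $j\neq 0$, so $\cmdeg(j)=\alpha\gg 0$. For the support I would combine part (3) with the standard local identity that $\mathrm{hasse}_\ell(\mathrm{Nm}_B)\cdot(-1,-1)_\ell=-1$ if and only if the quaternion algebra $B$ ramifies at $\ell$. Applied to $(K,Q_\alpha)\iso(B_{p,\infty},\mathrm{Nm})$ this gives $\inv_\ell(\alpha)=-1$ exactly for $\ell\in\{p,\infty\}$. At finite primes $\inv^*_\ell=\inv_\ell$ contributes only $p$, while at the archimedean place the sign flip $\inv^*_\infty=-\inv_\infty$ precisely cancels the ramification of $B_{p,\infty}$ at $\infty$, removing $\infty$ from the support; hence $\Sppt(\alpha)=\{p\}$.

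\textbf{Main obstacle.} The conceptual content lies in part (3): passing from the Hermitian model (\ref{basic hermitian}) to the exact quaternionic isometry, and in particular absorbing the scaling factor $N$ via the roundness of the norm form, is where the real care is needed. The remaining work in part (4) is the bookkeeping of Hasse invariants, which becomes routine once the identity with quaternion ramification is in hand, the archimedean sign in $\inv^*$ being exactly the correction that encodes positive-definiteness.
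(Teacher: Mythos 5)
Your proposal is correct and follows essentially the same route as the paper's proof: the $p=0$/ordinary contradiction via $K_1\iso K_2$, the observation that the Hermitian model (\ref{basic hermitian}) represents $\alpha$ so that $(K,Q_\alpha)\iso(V(\mathbf{E}_1,\mathbf{E}_2),\deg)$, transport to the quaternion norm form with a positive rational scaling factor absorbed by multiplicativity of the norm, and the Hasse-invariant computation identifying $\Sppt(\alpha)=\{p\}$. The only cosmetic difference is in part (3), where the paper uses the single map $f\mapsto f^\vee\circ j$ into $\End(E_1)\otimes_\Z\Q$ (scaling $\deg$ by $b=\deg(j)$, which is absorbed because $b$ is a reduced norm), whereas you transport through an auxiliary supersingular curve via $j'\mapsto\phi_2^\vee\circ j'\circ\phi_1$ and invoke roundness of the Pfister form --- the same absorption argument in slightly different clothing.
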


\begin{proof}
Suppose that  $p=0$.  As $j:E_1\map{}E_2$ is a nonzero isogeny,
$\kappa_1$, $\kappa_2$, and $j$ determine isomorphisms
\begin{equation}\label{not ordinary}
K_1\iso \End(E_1)\otimes_\Z\Q \iso \End(E_2)\otimes_\Z\Q\iso K_2,
\end{equation}
contrary to our hypotheses on $K_1$ and $K_2$.  Thus $k$ has  characteristic $p>0$.
The existence of the isogeny $j$ implies that the
 elliptic curves $E_1$ and $E_2$ are  either both supersingular or both ordinary.
 If they are both ordinary then again (\ref{not ordinary}) gives a contradiction.
 Thus $(\mathbf{E}_1,\mathbf{E}_2)$ is supersingular.  In particular
$$
\End(E_1) \otimes_\Z\Q \iso H \iso \End(E_2)\otimes_\Z\Q
$$
as $\Q$-algebras, and as $K_1$ and $K_2$ both embed into $H$ we see that $p$ is nonsplit
in both $K_1$ and $K_2$.   By hypothesis the quadratic space
(\ref{basic hermitian})  represents $\alpha$,  and so there is  a  $u\in K^\times$ such that
$\alpha=\beta\cdot\mathrm{Nm}_{K/F}(u)$.  Thus we  have
an isomorphism  of  $F$-quadratic spaces
$$
 ( V(\mathbf{E}_1,\mathbf{E}_2), \cmdeg ) \iso (K, \alpha \cdot \mathrm{Nm}_{K/F} )
$$
and so also an isomorphism of $\Q$-quadratic spaces
$$
 ( V(\mathbf{E}_1,\mathbf{E}_2), \deg ) \iso (K, Q_\alpha  ).
$$
Fix an isomorphism of $\Q$-algebras $H\iso \End(E_1)\otimes_\Z\Q$.  The function
$f\mapsto f^\vee \circ j$ defines an isomorphism of $\Q$-quadratic spaces
$$
(V(\mathbf{E}_1,\mathbf{E}_2), \deg) \iso (H,  b^{-1} \cdot \mathrm{Nm})
$$
where $b=\deg(j)$, and as $b$ lies in the image of the reduced norm
$H^\times\map{}\Q^\times$ there is an isomorphism of $\Q$-quadratic spaces
$(H,  b^{-1} \cdot \mathrm{Nm}) \iso (H,  \mathrm{Nm}) $.
It only remains to prove that $\Sppt(\alpha)=\{p\}$.  Using the isomorphism
$(K,Q_\alpha)\iso (H,\mathrm{Nm})$ already proved we find
$$
\inv_\ell(\alpha) = \mathrm{hasse}_\ell( H_\ell, \mathrm{Nm} )\cdot  (-1,-1)_\ell.
$$
By direct calculation of the Hasse invariant of $(H_\ell ,\mathrm{Nm})$ it follows that
$$
\inv_\ell(\alpha) = \begin{cases}
-1 &\mathrm{if\ }\ell \in \{ p,\infty \} \\
1 &\mathrm{otherwise}.
\end{cases}
$$
In particular $\inv^*_\ell(\alpha)=-1$ if and only if $\ell=p$.
\end{proof}

\begin{Cor}\label{Cor 2.2.4}
Suppose $\alpha\in F^\times$.  If $\mathcal{X}_\alpha\not=\emptyset$ then $\alpha$
is totally positive and    $\Sppt(\alpha)=\{ p \}$  for a finite  prime $p$.  Furthermore
 all geometric points of  $\mathcal{X}_\alpha$ lie in characteristic $p$ and are supersingular.
\end{Cor}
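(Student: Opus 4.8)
The plan is to derive this corollary almost entirely from Proposition \ref{Prop 2.2.2}, which has already carried out the analysis of an arbitrary geometric point. The single conceptual observation that makes the deduction work is that the support $\Sppt(\alpha)$ is an invariant attached to $\alpha$ alone---it is defined through the modified local invariants $\inv^*_\ell(\alpha)$ and makes no reference to any geometric point---whereas the characteristic of a geometric point is a priori point-dependent. Matching the two is what pins down the characteristic.

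First I would record that $\mathcal{X}_\alpha$ is an algebraic stack of finite type over $\Spec(\Z)$, so that the nonemptiness hypothesis supplies at least one geometric point: there exist an algebraically closed field $k$, of some characteristic $p_0\ge 0$, and an object $(\mathbf{E}_1,\mathbf{E}_2,j)\in\mathcal{X}_\alpha(k)$. Applying Proposition \ref{Prop 2.2.2} to this object yields at once that $p_0>0$, that the CM pair $(\mathbf{E}_1,\mathbf{E}_2)$ is supersingular, that $\alpha$ is totally positive, and that $\Sppt(\alpha)=\{p_0\}$. In particular $\Sppt(\alpha)$ consists of a single finite rational prime, which I name $p:=p_0$; this establishes the first assertion of the corollary.

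For the ``furthermore'' clause I would exploit exactly the independence noted above. Since $\Sppt(\alpha)$ is determined by $\alpha$ with no dependence on the chosen geometric point, the equality $\Sppt(\alpha)=\{p\}$ is now a fixed fact about $\alpha$. Consequently, if $(\mathbf{E}'_1,\mathbf{E}'_2,j')$ is \emph{any} geometric point of $\mathcal{X}_\alpha$, say over an algebraically closed field of characteristic $p'$, then Proposition \ref{Prop 2.2.2} forces $\Sppt(\alpha)=\{p'\}$, whence $p'=p$. Combining this with part (1) of the proposition---which gives $p'>0$ and supersingularity for every such point---shows that all geometric points of $\mathcal{X}_\alpha$ lie in characteristic $p$ and are supersingular, as claimed.

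The only step demanding any genuine care is the passage from ``$\mathcal{X}_\alpha\ne\emptyset$'' to the existence of a geometric point over an algebraically closed field, which I expect to be the main (and still rather mild) obstacle: it uses only that $\mathcal{X}_\alpha$ is a nonempty algebraic stack of finite type over $\Z$, so that it has a point valued in some field, which can then be embedded into an algebraic closure. Everything else is a direct invocation of Proposition \ref{Prop 2.2.2} together with the remark that $\Sppt(\alpha)$ is a function of $\alpha$ only.
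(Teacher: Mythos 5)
Your proof is correct and follows exactly the route the paper intends: the paper's proof is the single line ``immediate from Proposition \ref{Prop 2.2.2},'' and your write-up simply makes explicit the implicit steps, in particular the key observation that $\Sppt(\alpha)$ depends only on $\alpha$ and therefore pins down one common characteristic for all geometric points. No gaps; the passage from nonemptiness to the existence of a geometric point over an algebraically closed field is handled correctly for a finite-type algebraic stack over $\Spec(\Z)$.
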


\begin{proof}
This is immediate from Proposition \ref{Prop 2.2.2}.
\end{proof}


\subsection{Local quadratic spaces}
\label{ss:quadratic}


Fix a prime $p$ that is nonsplit in both $K_1$ and $K_2$, and a supersingular CM pair
$(\mathbf{E}_1,\mathbf{E}_2)$ over $\F_p^\alg$.   For $i\in \{1,2\}$ the action
$$
\kappa^\mathrm{Lie}_i:\co_{K_i} \map{}\End_{\F_p^\alg}(\mathrm{Lie}(E_i)) \iso \F_p^\alg
$$
induces a homomorphism $\co_K\iso \co_{K_1}\otimes_\Z\co_{K_2} \map{}\F_p^\alg$ defined by
\begin{equation}\label{reflex map}
t_1\otimes t_2 \mapsto  \kappa^\mathrm{Lie}_1(t_1)\cdot \kappa^\mathrm{Lie}_2(t_2).
\end{equation}
The kernel of this map is denoted $\mathfrak{q}$.

\begin{Def}\label{Def:reflex prime}
The prime $\mathfrak{p}$ of $F$  below $\mathfrak{q}$ is  the \emph{reflex prime} of $(\mathbf{E}_1,\mathbf{E}_2)$.
\end{Def}

\begin{Rem}\label{Rem:prime description}
If $p$ is inert in both $K_1$ and $K_2$ then $p$ is split in $F$, and each prime of $F$ above $p$
is inert in $K$;  in particular there are two possibilities for the reflex prime of a
supersingular CM pair $(\mathbf{E}_1,\mathbf{E}_2)$.  If instead $p$ is ramified in one of $K_1$ or
$K_2$ then $p$ is ramified in $F$, and the unique prime of $F$ above $p$ is inert in $K$; in particular
there is only one possibility for the reflex prime of $(\mathbf{E}_1,\mathbf{E}_2)$.
\end{Rem}

For every prime $\ell$  abbreviate
\begin{eqnarray*}
L_\ell(\mathbf{E}_1,\mathbf{E}_2) &=&  L(\mathbf{E}_1,\mathbf{E}_2) \otimes_\Z\Z_\ell \\
V_\ell(\mathbf{E}_1,\mathbf{E}_2) &=&  V(\mathbf{E}_1,\mathbf{E}_2) \otimes_\Q\Q_\ell.
\end{eqnarray*}

\begin{Lem}\label{Lem:CM model}
Suppose   $\ell$  is a  prime different from $p$.   For some  $\beta \in F_{\ell}^\times$ satisfying
$\beta\co_{F,\ell} =  \mathfrak{D}^{-1}_\ell$ there is a $K_\ell$-linear  isomorphism of $F_\ell$-quadratic spaces
$$
(V_\ell(\mathbf{E}_1,\mathbf{E}_2) , \cmdeg) \iso ( K_\ell, \beta \cdot \mathrm{Nm}_{K_{\ell}/F_{\ell}} )
$$
taking $L_\ell(\mathbf{E}_1,\mathbf{E}_2)$ isomorphically to $\co_{K,\ell}$.
\end{Lem}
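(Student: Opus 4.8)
The plan is to work one prime $\ell\neq p$ at a time and to reduce the entire statement to computing the fractional $\co_{F,\ell}$-ideal generated by the value of $\cmdeg$ on a single generator of the lattice. Since $K=K_1\otimes_\Q K_2$ is the biquadratic field and $V(\mathbf{E}_1,\mathbf{E}_2)$ is four-dimensional over $\Q$, it is free of rank one over $K$; localizing, $V_\ell(\mathbf{E}_1,\mathbf{E}_2)$ is a free rank-one $K_\ell$-module and $L_\ell(\mathbf{E}_1,\mathbf{E}_2)$ is an $\co_{K,\ell}$-stable lattice in it. First I would note that $\co_{K,\ell}=\prod_{\lambda\mid\ell}\co_{K,\lambda}$ is a finite product of complete discrete valuation rings, so the torsion-free, generically rank-one module $L_\ell$ is automatically free of rank one over $\co_{K,\ell}$; I fix a generator $j_0$ and set $\beta=\cmdeg(j_0)=\langle\,j_0,j_0\,\rangle_\CM\in F_\ell^\times$. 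Because $\langle\,\cdot\,,\,\cdot\,\rangle_\CM$ is $K_\ell$-Hermitian (Proposition \ref{Prop 2.1.3}(3)) and the space is one-dimensional over $K_\ell$, the map $\xi\mapsto\xi\cdot j_0$ is a $K_\ell$-linear isometry $(K_\ell,\beta\cdot\mathrm{Nm}_{K_\ell/F_\ell})\to(V_\ell,\cmdeg)$ carrying $\co_{K,\ell}$ onto $L_\ell$. Thus the only remaining point is to show $\beta\co_{F,\ell}=\mathfrak{D}_\ell^{-1}$.

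To pin down $\beta$ I would compare two dualities of $L_\ell$. The geometric input here, and the step I expect to be the main obstacle, is the self-duality of $(L_\ell,[\,,\,])$ as a $\Z_\ell$-lattice. For this I would use that for $\ell\neq p$ the natural inclusion $\Hom(E_1,E_2)\otimes_\Z\Z_\ell\hookrightarrow\Hom_{\Z_\ell}(T_\ell E_1,T_\ell E_2)$ is an \emph{equality}: each $\End(E_i)\otimes\Z_\ell$ is a maximal order in $M_2(\Q_\ell)$ because $E_i$ is supersingular, and an inclusion of maximal orders of equal rank is an equality (this is exactly where supersingularity and $\ell\neq p$ enter). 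Under this identification the Weil pairing shows that $\deg$ corresponds to the determinant form on $\Hom_{\Z_\ell}(T_\ell E_1,T_\ell E_2)\cong M_2(\Z_\ell)$, whose associated bilinear form is unimodular; hence $L_\ell^{\vee}=L_\ell$, where $\vee$ denotes the $\Z_\ell$-dual with respect to $[\,,\,]$.

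Finally I would convert this self-duality into the ideal computation. The identity $[\,,\,]=\tr_{F/\Q}[\,,\,]_\CM$, together with the fact that $\mathfrak{D}^{-1}$ is the trace-dual of $\co_F$, gives the standard relation $L_\ell^{\vee}=\mathfrak{D}_\ell^{-1}\cdot L_\ell^{\vee_{\co_F}}$, where $L_\ell^{\vee_{\co_F}}=\{x:[x,L_\ell]_\CM\subseteq\co_{F,\ell}\}$. On the other hand, writing $L_\ell=\co_{K,\ell}\,j_0$ and using $[\xi j_0,\eta j_0]_\CM=\tr_{K/F}(\beta\xi\bar\eta)$, the condition $\tr_{K/F}(\beta\xi\,\co_{K,\ell})\subseteq\co_{F,\ell}$ combined with the triviality of the relative different of $K/F$ at finite primes (since $K/F$ is unramified there) yields $L_\ell^{\vee_{\co_F}}=\beta^{-1}L_\ell$. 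Combining the two computations gives $L_\ell=\mathfrak{D}_\ell^{-1}\beta^{-1}L_\ell$, and comparing the scaling ideals of the free module $L_\ell$ forces $\mathfrak{D}_\ell^{-1}\beta^{-1}=\co_{F,\ell}$, that is $\beta\co_{F,\ell}=\mathfrak{D}_\ell^{-1}$, completing the proof.
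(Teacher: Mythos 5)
Your proof is correct and follows essentially the same route as the paper: both reduce the lemma to computing the ideal $\beta\co_{F,\ell}$, establish self-duality of $(L_\ell(\mathbf{E}_1,\mathbf{E}_2),\deg)$ via the identification with $\Hom_{\Z_\ell}(\mathrm{Ta}_\ell(E_1),\mathrm{Ta}_\ell(E_2))\iso M_2(\Z_\ell)$ carrying $\deg$ to a unit multiple of $\det$, and then combine trace duality for $F/\Q$ (producing the $\mathfrak{D}_\ell^{-1}$) with the triviality of the relative different of the unramified extension $K/F$ to force $\beta\co_{F,\ell}=\mathfrak{D}^{-1}_\ell$. The only differences are expository: you construct the isometry from an explicit $\co_{K,\ell}$-generator $j_0$ with $\beta=\cmdeg(j_0)$ and justify the Tate-module equality, details the paper leaves implicit, and you phrase the final step with dual lattices where the paper speaks of perfect pairings valued in $\mathfrak{D}_\ell^{-1}$.
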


\begin{proof}
The existence of the desired isomorphism for some choice of $\beta \in F_{\ell}^\times$ is clear from
(\ref{basic hermitian}).  We must determine the fractional $\co_{F,\ell}$-ideal $\beta\co_{F,\ell}$.
Any choice of $\Z_\ell$-bases for the Tate modules $\mathrm{Ta}_\ell(E_1)$ and
$\mathrm{Ta}_\ell(E_2)$  determines   isomorphisms  of $\Z_\ell$-modules
$$
L_\ell(\mathbf{E}_1,\mathbf{E}_2)  \iso
\Hom_{\Z_\ell}(\mathrm{Ta}_\ell(E_1),\mathrm{Ta}_\ell(E_2)) \iso M_2(\Z_\ell),
$$
which identify the quadratic form $\deg$ with the quadratic form $u\cdot \det$ for some $u\in\Z_\ell^\times$.
It follows that the $\Z_\ell$-lattice  $L_\ell(\mathbf{E}_1,\mathbf{E}_2)$ is self dual relative to
$\deg$, and  hence that  the  $\co_{F,\ell}$-bilinear form of Proposition \ref{Prop 2.1.3}
$$
[\cdot,\cdot]_\CM : L_\ell(\mathbf{E}_1,\mathbf{E}_2) \times L_\ell(\mathbf{E}_1,\mathbf{E}_2)
 \map{}\mathfrak{D}_\ell^{-1}
$$
is  a perfect pairing.  This implies that the  $\co_{F,\ell}$-bilinear form
$$
\co_{K,\ell}\times\co_{K,\ell}\map{}\mathfrak{D}_\ell^{-1}
$$
defined by $\beta \cdot  \mathrm{Tr}_{K_\ell/F_{\ell}}(x\overline{y})$  is a perfect pairing.   As $K/F$ is
unramified, it follows   that  $\beta$ generates $\mathfrak{D}_\ell^{-1}$.
\end{proof}

 \begin{Lem}\label{Lem:CM model I}
 For some   $\beta\in F_{p}^\times $ satisfying $\beta\co_{F,p}=\mathfrak{p}\mathfrak{D}^{-1}_p$ there is a
 $K_p$-linear  isomorphism of  $F_p$-quadratic spaces
 $$
 (V_p(\mathbf{E}_1,\mathbf{E}_2) , \cmdeg) \iso (K_p, \beta\cdot \mathrm{Nm}_{K_p/F_p} )
 $$
 taking  $L_p(\mathbf{E}_1,\mathbf{E}_2)$ isomorphically to $\co_{K,p}$. Here $\mathfrak p$ is the reflex prime of $(\mathbf E_1, \mathbf E_2)$.
\end{Lem}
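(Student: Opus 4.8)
The plan is to imitate the proof of Lemma~\ref{Lem:CM model}: first produce a $K_p$-linear isometry carrying $L_p(\mathbf E_1,\mathbf E_2)$ onto $\co_{K,p}$ for \emph{some} $\beta$, and then determine the fractional ideal $\beta\co_{F,p}$. Since $\co_{K,p}$ is a finite product of maximal orders in local fields, the rank-one torsion-free $\co_{K,p}$-module $L_p(\mathbf E_1,\mathbf E_2)$ is free; I fix a generator $j_0$ and set $\beta=\cmdeg(j_0)$. Proposition~\ref{Prop 2.1.3} gives $\cmdeg(j)=\langle j,j\rangle_\CM$, and the $K$-linearity of the Hermitian form yields $\cmdeg(x\action j_0)=\mathrm{Nm}_{K_p/F_p}(x)\cdot\beta$ for $x\in K_p$. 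Hence $x\mapsto x\action j_0$ is a $K_p$-linear isometry $(K_p,\beta\cdot\mathrm{Nm}_{K_p/F_p})\map{}(V_p(\mathbf E_1,\mathbf E_2),\cmdeg)$ taking $\co_{K,p}$ onto $L_p(\mathbf E_1,\mathbf E_2)$, and it remains only to show $\beta\co_{F,p}=\mathfrak p\mathfrak D_p^{-1}$.

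Next I would reduce this to a statement about dual lattices. As in Lemma~\ref{Lem:CM model}, the identity $[\cdot,\cdot]=\tr_{F/\Q}[\cdot,\cdot]_\CM$ together with the fact that $\mathfrak D^{-1}_p$ is the codifferent of $\co_{F,p}$ shows that the $\deg$-dual of $L_p(\mathbf E_1,\mathbf E_2)$ equals its $\co_{F,p}$-dual relative to $[\cdot,\cdot]_\CM$ valued in $\mathfrak D^{-1}_p$. Computing this dual in the model $(\co_{K,p},\beta\cdot\mathrm{Nm}_{K_p/F_p})$, and using that $K/F$ is unramified so that $\co_{K,p}$ is self-dual for $\tr_{K_p/F_p}(x\overline y)$, one finds the dual lattice to be $(\beta\mathfrak D_p)^{-1}\co_{K,p}$. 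Thus the assertion $\beta\co_{F,p}=\mathfrak p\mathfrak D_p^{-1}$ is equivalent to the statement $L_p(\mathbf E_1,\mathbf E_2)^\vee=\mathfrak p^{-1}L_p(\mathbf E_1,\mathbf E_2)$ about the $\deg$-dual.

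To get at the $\deg$-dual I would use the supersingular structure of Proposition~\ref{Prop 2.2.2}. Fixing an isomorphism $\End(E_1)\otimes_\Z\Q\iso H$ identifies $(V_p(\mathbf E_1,\mathbf E_2),\deg)$ with $(H_p,\mathrm{Nm})$, under which $L_p(\mathbf E_1,\mathbf E_2)$ becomes an invertible lattice over the maximal order $\co_{H,p}$ of the division quaternion algebra $H_p$; since $E_1$ and $E_2$ are linked by a prime-to-$p$ isogeny this lattice has trivial reduced-norm ideal, hence is a unit rescaling of $(\co_{H,p},\mathrm{Nm})$. As the reduced discriminant of $\co_{H,p}$ equals $p$, this gives $[L_p^\vee:L_p]=p^2$, and therefore $N_{F/\Q}(\beta\mathfrak D_p)=p$. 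Because $p$ is split or ramified in $F$ (Remark~\ref{Rem:prime description}), every prime of $F$ above $p$ has residue degree one, so $\beta\mathfrak D_p\co_{F,p}$ must be a single such prime. In the ramified case there is only one prime above $p$, necessarily the reflex prime $\mathfrak p$, and the lemma follows.

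The main obstacle is the inert--inert case, where $p$ splits in $F$ and there are two primes $\mathfrak p,\mathfrak p'$ above $p$; here I must show that the defect is located over the reflex prime $\mathfrak p$, not over $\mathfrak p'$. I would argue locally via Dieudonn\'e theory. Decomposing $\co_{K,p}=\co_{K,\mathfrak q}\times\co_{K,\mathfrak q'}$ and $L_p=L_\mathfrak q\oplus L_{\mathfrak q'}$ according to the two primes $\mathfrak q,\mathfrak q'$ of $K$ above $p$, I would identify each summand with the corresponding isotypic piece of the module of homomorphisms between the (covariant) Dieudonn\'e modules of the supersingular $p$-divisible groups of $E_1$ and $E_2$, carrying their $\co_{K_i}$-actions. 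The reflex prime is cut out by the kernel $\mathfrak q$ of the Lie-algebra character~(\ref{reflex map}); carrying out the Dieudonn\'e computation with Frobenius and Verschiebung should show that precisely this Lie-algebra condition forces $L_\mathfrak q$ to be non-self-dual---supplying the factor $\mathfrak p$---while $L_{\mathfrak q'}$ remains self-dual. This crystalline computation, matching the failure of self-duality to the reflex prime, is the crux; the discriminant bookkeeping above fixes only the size of the defect and cannot by itself distinguish $\mathfrak p$ from $\mathfrak p'$.
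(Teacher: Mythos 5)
Your preparatory steps are correct and in fact coincide with the paper's own proof: the paper likewise first obtains the model $(K_p,\beta\cdot\mathrm{Nm}_{K_p/F_p})$ for \emph{some} $\beta$ from (\ref{basic hermitian}), reduces the determination of $\beta\co_{F,p}$ to a dual-lattice computation for $[\cdot,\cdot]_\CM$ (its display (\ref{dual indices})), identifies $(L_p(\mathbf{E}_1,\mathbf{E}_2),\deg)$ with $(\Delta,\mathrm{Nm})$ for $\Delta$ the maximal order of the quaternion division algebra over $\Q_p$ (via Dieudonn\'e modules after prime-to-$p$ isogenies, which is the same device as your ``trivial reduced-norm ideal'' step), deduces $[\co_{K,p}:\beta\mathfrak{D}\co_{K,p}]=p^2$ from $\Delta^\vee=\mathfrak{m}_\Delta^{-1}$, and settles the ramified case exactly as you do. One small point you elide: to conclude that $\beta\mathfrak{D}_p$ is one of the primes $\mathfrak{p},\mathfrak{p}'$, rather than merely a product $\mathfrak{p}^a\mathfrak{p}'^b$ with $a+b=1$, you need $a,b\ge 0$, i.e. $L_p\subseteq L_p^\vee$; this holds because $[\cdot,\cdot]_\CM$ is $\mathfrak{D}^{-1}$-valued on the integral lattice, but it should be said.

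The genuine gap is the inert--inert case, and you name it yourself: having reduced the lemma to showing that the length-two defect $L_p^\vee/L_p$ is supported at the reflex prime rather than its conjugate, you write that a Dieudonn\'e computation with Frobenius and Verschiebung ``should show'' this. That step is the entire content of the lemma in this case --- every earlier step is symmetric in $\mathfrak{p}$ and $\mathfrak{p}'$, as your own discriminant bookkeeping remark concedes --- so the proposal as written asserts the crux rather than proving it. For comparison, the paper closes it with no explicit Frobenius/Verschiebung analysis: choose the uniformizer $\Pi\in\Delta$ so that $\kappa_1(\overline{t}_1)\Pi=\Pi\kappa_1(t_1)$; then $\mathfrak{m}_\Delta^{-1}/\Delta$ is spanned by elements $u\Pi^{-1}$ with $u\in\kappa_1(\co_{K_1})$, and
$$
(t_1\otimes t_2)\action u\Pi^{-1}=\kappa_2(t_2)\,u\,\Pi^{-1}\kappa_1(\overline{t}_1)=\kappa_2(t_2)\kappa_1(t_1)\cdot u\Pi^{-1},
$$
so $\co_K$ acts on $L_p^\vee/L_p\iso\mathfrak{m}_\Delta^{-1}/\Delta$ through the character $t_1\otimes t_2\mapsto\kappa_2(t_2)\kappa_1(t_1)\bmod\mathfrak{m}_\Delta$. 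Since reduction modulo $\mathfrak{m}_\Delta$ is precisely the action of $\Delta$ on $\mathrm{Lie}(E)$, this character is the reflex map (\ref{reflex map}), whose kernel $\mathfrak{q}$ therefore annihilates the defect; hence $\beta^{-1}\mathfrak{D}^{-1}\co_{K,p}/\co_{K,p}\iso\co_{K,p}/\mathfrak{q}$ and $\beta\co_{F,p}=\mathfrak{p}\mathfrak{D}_p^{-1}$. Your proposed isotypic decomposition can also be pushed through (compare the splitting $\Delta=\co_L\oplus\co_L\Pi$ and the characters $f_\pm$ in the proof of Lemma \ref{Lem:local ring unr}, where the non-self-dual summand $\co_L\Pi$ is visibly the $\mathfrak{p}_-$-isotypic piece), but until that computation is actually carried out your argument does not distinguish $\mathfrak{p}$ from $\mathfrak{p}'$ and the lemma is not proved.
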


\begin{proof}
The existence of the desired isomorphism for some choice of $\beta \in F_p^\times$ is clear from
(\ref{basic hermitian}).  We must determine the fractional $\co_{F, p}$-ideal $\beta\co_{F,p}$.
The proof is similar to the proof of Lemma  \ref{Lem:CM model}, but with $\ell$-adic Tate modules
replaced by (covariant) Dieudonn\'e modules.   Fix any supersingular elliptic curve $E$ over $\F_p^\alg$.
By choosing prime to $p$-isogenies $E_i\map{}E$ we may reduce to the case where the CM
elliptic curves $\mathbf{E}_1$ and $\mathbf{E}_2$ have the same underlying elliptic curve $E$.
Let $D$ be the   Dieudonn\'e module of  $E$ and set $\Delta=\End(D)$, the maximal order
in a quaternion division algebra over $\Q_p$.   In this way we obtain an isomorphism of $\Z_p$-quadratic
spaces
 $$
\big( L_p(\mathbf{E}_1,\mathbf{E}_2) ,\deg \big)  \iso ( \Delta ,\mathrm{Nm})
$$
where $\mathrm{Nm}$ is the  reduced norm  on $\Delta$.   Denoting by $\mathfrak{m}_\Delta$ the
unique maximal ideal of $\Delta$,  the dual lattice of $\Delta$ relative to
$\mathrm{Nm}$ is   $\mathfrak{m}_\Delta^{-1}$.  The dual lattice of
$L_p(\mathbf{E}_1,\mathbf{E}_2)$ with respect to  $\deg$ is
$$
L_p(\mathbf{E}_1,\mathbf{E}_2)^\vee =
\big\{ j \in V_p(\mathbf{E}_1,\mathbf{E}_2)   :
[j,j']_\CM \in \mathfrak{D}_p^{-1} \quad \forall j' \in  L_p(\mathbf{E}_1,\mathbf{E}_2)  \big\},
$$
and  we have $\co_{K,p}$-linear isomorphisms
\begin{equation}\label{dual indices}
\beta^{-1} \mathfrak{D}^{-1}\co_{K,p}/\co_{K,p} \iso
L_p(\mathbf{E}_1,\mathbf{E}_2)^\vee/ L_p(\mathbf{E}_1,\mathbf{E}_2) \iso
\mathfrak{m}^{-1}_\Delta/\Delta.
\end{equation}
As a group $\Delta/\mathfrak{m}_\Delta\iso \F_{p^2}$, and so $[\co_{K,p}: \beta\mathfrak{D} \co_{K,p}] = p^2$.

Suppose first that $p$ is ramified in one of $K_1$ or $K_2$, so that  $p\co_F=\mathfrak{p}^2$.
Using Remark \ref{Rem:prime description}  and the equality
$[\co_{K,p}: \beta\mathfrak{D}\co_{K,p}] = p^2$
we immediately deduce that $\beta\mathfrak{D}_p=\mathfrak{p}\co_{F,p}$, and we are done.

Now suppose  that $p$ is unramified in $K_1$ and $K_2$, and  recall that
$\co_K$ acts on $\Delta$ by
$$
(t_1\otimes t_2) \action j = \kappa_2(t_2)  \circ  j  \circ  \kappa_1(\overline{t}_1) .
$$
If we fix a uniformizing parameter   $\Pi\in \Delta$ in such a way that    $\kappa_1(\overline{t}_1) \Pi = \Pi \kappa_1(t_1)$
for every $t_1\in \co_{K_1}$, then for any  $u\in \kappa_1(\co_{K_1})\subset \Delta$ we have
$$
(t_1\otimes t_2)\action u \Pi^{-1}  = \kappa_2(t_2) u \Pi^{-1} \kappa_1(\overline{t}_1) =
\kappa_2(t_2)\kappa_1(t_1)  \cdot u\Pi^{-1}.
$$
As $\mathfrak{m}^{-1}_\Delta/\Delta$ is generated by such elements $u\Pi^{-1}$, $\co_K$ acts on
$\mathfrak{m}^{-1}_\Delta/\Delta$ through left  multiplication by the composition
$$
\co_K\map{}  \Delta \map{}\Delta/\mathfrak{m}_\Delta
$$
where the first arrow is $t_1\otimes t_2 \mapsto \kappa_2(t_2)\kappa_1(t_1)$.  On the other hand, the action
$$\Delta\map{}\End_{\F_p^\alg}( \mathrm{Lie}(E)) \iso \F_p^\alg$$ determines an isomorphism
$\Delta/\mathfrak{m}_\Delta\iso \F_{p^2}$ which allows us to identify $\kappa_i^\mathrm{Lie}$ with the
composition
$$
\co_{K_i} \map{\kappa_i} \Delta \map{}\Delta/\mathfrak{m}_\Delta \map{}\F_{p^2}.
$$
The conclusion of all of this is that the action of $\co_K$ on $\mathfrak{m}^{-1}_\Delta/\Delta$ factors through the
kernel $\mathfrak{q}$ of the map (\ref{reflex map}).  Returning to (\ref{dual indices}) we find that
$$
\beta^{-1}\mathfrak{D}^{-1} \co_{K,p}/\co_{K,p} \iso \co_{K,p}/\mathfrak{q}
$$
as $\co_K$-modules,  and the relation  $\beta\co_{F,p}=\mathfrak{p}\mathfrak{D}_p^{-1}$
follows easily.
\end{proof}

\begin{Thm}\label{Thm:quadratic model}
For any finite idele $\beta\in \widehat{F}^\times$ satisfying
$\beta\widehat{\co}_F = \mathfrak{p}\mathfrak{D}^{-1}\widehat{\co}_F$ there is a $\widehat{K}$-linear isomorphism of
$\widehat{F}$-quadratic spaces
$$
\big(  \widehat{V}(\mathbf{E}_1,\mathbf{E}_2) , \cmdeg \big) \iso \big( \widehat{K} , \beta \cdot \mathrm{Nm}_{K/F} \big)
$$
taking  $\widehat{L}(\mathbf{E}_1,\mathbf{E}_2)$ isomorphically to $\widehat{\co}_K$.
\end{Thm}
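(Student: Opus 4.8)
The plan is to assemble the purely local statements of Lemmas \ref{Lem:CM model} and \ref{Lem:CM model I} into a single adelic isomorphism. Writing $\widehat{V}(\mathbf{E}_1,\mathbf{E}_2) = \prod'_\ell V_\ell(\mathbf{E}_1,\mathbf{E}_2)$ and $\widehat{K} = \prod'_\ell K_\ell$ as restricted products over the finite primes $\ell$ (with respect to the lattices $L_\ell(\mathbf{E}_1,\mathbf{E}_2)$ and $\co_{K,\ell}$), and noting $\widehat{L}(\mathbf{E}_1,\mathbf{E}_2) = \prod_\ell L_\ell(\mathbf{E}_1,\mathbf{E}_2)$ and $\widehat{\co}_K = \prod_\ell \co_{K,\ell}$, it suffices to produce, for every finite $\ell$, a $K_\ell$-linear isometry $(V_\ell,\cmdeg) \iso (K_\ell, \beta \cdot \mathrm{Nm}_{K_\ell/F_\ell})$ carrying $L_\ell$ onto $\co_{K,\ell}$, in such a way that these isometries are standard at all but finitely many $\ell$ and so that their product descends to the restricted products.

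For each finite $\ell$, Lemma \ref{Lem:CM model} (when $\ell\neq p$) or Lemma \ref{Lem:CM model I} (when $\ell=p$) supplies such an isometry, but with the prescribed $\beta$ replaced by some $\beta^0_\ell\in F_\ell^\times$ generating the same fractional ideal: $\beta^0_\ell\co_{F,\ell} = \mathfrak{D}_\ell^{-1}$ for $\ell\neq p$ and $\beta^0_p\co_{F,p} = \mathfrak{p}\mathfrak{D}_p^{-1}$. Since the prescribed idele satisfies $\beta\widehat{\co}_F = \mathfrak{p}\mathfrak{D}^{-1}\widehat{\co}_F$ and $\mathfrak{p}$ lies above $p$, its $\ell$-component generates exactly these same ideals; hence $\beta_\ell/\beta^0_\ell\in \co_{F,\ell}^\times$ at every finite $\ell$.

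The crux is to replace $\beta^0_\ell$ by the exact prescribed component $\beta_\ell$ without disturbing the integral lattice. Composing the local isometry with multiplication by some $u_\ell\in K_\ell^\times$ scales the target form by $\mathrm{Nm}_{K_\ell/F_\ell}(u_\ell)$ and sends $\co_{K,\ell}$ to $u_\ell\co_{K,\ell}$; thus I need $u_\ell\in \co_{K,\ell}^\times$ with $\mathrm{Nm}_{K_\ell/F_\ell}(u_\ell) = \beta^0_\ell/\beta_\ell$. Because $\gcd(d_1,d_2)=1$ forces $K/F$ to be unramified at every finite place, the local norm $\mathrm{Nm}_{K_\ell/F_\ell}\colon \co_{K,\ell}^\times \map{}\co_{F,\ell}^\times$ is surjective: working factor-by-factor over the places $w\mid\ell$ of $F$, this is trivial at split $w$ and is the standard surjectivity of the norm on units of an unramified local extension at inert $w$. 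As $\beta^0_\ell/\beta_\ell\in\co_{F,\ell}^\times$, the required $u_\ell$ exists, and the adjusted isometry carries $L_\ell$ onto $\co_{K,\ell}$ with scaling factor exactly $\beta_\ell$.

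Finally, for all but the finitely many $\ell$ dividing $p\mathfrak{D}$ the component $\beta_\ell$ is a unit and $L_\ell$ is the standard self-dual lattice (as in the proof of Lemma \ref{Lem:CM model}), so the adjusted local isometries are unramified and their restricted product is a well-defined $\widehat{K}$-linear isometry $\widehat{V}(\mathbf{E}_1,\mathbf{E}_2)\iso (\widehat{K},\beta\cdot\mathrm{Nm}_{K/F})$ taking $\widehat{L}(\mathbf{E}_1,\mathbf{E}_2)$ onto $\widehat{\co}_K$. I expect the only genuine obstacle to be the surjectivity of the norm on local units at the unramified places, which is precisely the ingredient that exploits the hypothesis $\gcd(d_1,d_2)=1$.
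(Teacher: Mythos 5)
Your proof is correct and follows essentially the same route as the paper: the paper likewise combines Lemmas \ref{Lem:CM model} and \ref{Lem:CM model I} to obtain the isomorphism for some idele $\beta$ generating $\mathfrak{p}\mathfrak{D}^{-1}\widehat{\co}_F$, and then passes to an arbitrary such $\beta$ by the surjectivity of the norm map $\widehat{\co}_K^\times\map{}\widehat{\co}_F^\times$ (valid because $\gcd(d_1,d_2)=1$ makes $K/F$ unramified at all finite places), which is precisely your unit-adjustment step spelled out place by place.
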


\begin{proof}
Combining  Lemma \ref{Lem:CM model} with Lemma \ref{Lem:CM model I}
 shows that the claim is true for some $\beta\in \widehat{F}^\times$ satisfying
$\beta\widehat{\co}_F = \mathfrak{p}\mathfrak{D}^{-1}\widehat{\co}_F$.
The surjectivity of the norm map  $\widehat{\co}_K^\times\map{}\widehat{\co}_F^\times$ implies
that the claim is true for all such $\beta$.
\end{proof}


\subsection{Group actions} \label{sect2.3}


For any sets $Y\subset X$  the characteristic function of $Y$ is denoted $\mathbf{1}_Y$.
For $i\in\{1,2\}$  define an algebraic group over $\Q$  by
$$
T_i(A) = (K_i\otimes_\Q A)^\times
$$
for any $\Q$-algebra $A$.  Let $\nu_i:T_i\map{}\mathbb{G}_m$ be the norm
 $\nu_i(t_i)=t_i \overline{t}_i$ and define
$$
T(A) = \{(t_1,t_2) \in T_1(A)\times T_2(A) : \nu_1(t_1)=\nu_2(t_2) \}.
$$
Define an algebraic group $S$ over $\Q$ by
$$
S(A)=\{ z\in (K\otimes_\Q A)^\times : \mathrm{Nm}_{K/F}(z)=1\}.
$$
There is an evident character $\nu:T\map{}\mathbb{G}_m$ defined by the relations
$$
\nu_1(t_1)=\nu(t) = \nu_2(t_2)
$$
for  $t=(t_1,t_2)\in T(R)$,  and a  homomorphism $\eta:T\map{}S$  defined by
\begin{equation}\label{unitary map}
\eta(t ) =  \nu(t)^{-1} \cdot (t_1\otimes t_2) .
\end{equation}
Let $U\subset T(\A_f)$ be the compact open subgroup
$$
U=T(\A_f)\cap (\widehat{\co}_{K_1}^\times \times \widehat{\co}_{K_2}^\times )
$$
and let $V\subset S(\A_f)$ be the image of $U$ under $\eta:T(\A_f)\map{}S(\A_f)$.  The groups $U$ and $V$ factor
as  $U=\prod_\ell U_\ell$ and $V=\prod_\ell V_\ell$ for compact open subgroups $U_\ell \subset T(\Q_\ell)$ and
$V_\ell\subset S(\Q_\ell)$.

\begin{Prop}\label{Prop 2.3.1}
If $k$ is a  field of characteristic $0$, the ring of adeles $\A$,  or the ring of finite adeles $\A_f$ then the sequence
$$
1\map{}k^\times \map{} T(k)\map{\eta}S(k)\map{}1
$$
is exact, where $k^\times\map{}T(k)$ is the diagonal inclusion.
\end{Prop}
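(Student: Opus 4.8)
The plan is to prove exactness of the sequence
$$
1\map{}k^\times \map{} T(k)\map{\eta}S(k)\map{}1
$$
place by place at the level of algebraic groups, then assemble the adelic statements from the statement over $\Q_\ell$ and $\R$. The central computational input is the formula $\eta(t)=\nu(t)^{-1}\cdot(t_1\otimes t_2)$, so the whole argument reduces to understanding the map $t\mapsto t_1\otimes t_2$ from $T$ into $(K\otimes_\Q k)^\times$ together with the normalization by $\nu(t)^{-1}$.

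\emph{Injectivity of $k^\times\to T(k)$ and its image in the kernel of $\eta$.} First I would verify that the diagonal $k^\times\hookrightarrow T(k)$ lands in $\ker(\eta)$ and that it is injective. For $z\in k^\times$ viewed as $(z,z)\in T_1(k)\times T_2(k)$, one has $\nu_1(z)=\nu_2(z)=z^2$, so $(z,z)\in T(k)$; and $\eta(z,z)=\nu(z,z)^{-1}\cdot(z\otimes z)=z^{-2}\cdot z^2\cdot(1\otimes 1)=1$, using that $z\otimes z=z^2$ under the natural inclusion $k\hookrightarrow K\otimes_\Q k$. Injectivity is immediate since the first coordinate already detects $z$.

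\emph{Exactness in the middle, i.e. $\ker(\eta)=k^\times$.} This is the heart of the argument. Suppose $t=(t_1,t_2)\in T(k)$ with $\eta(t)=1$, meaning $t_1\otimes t_2=\nu(t)\in k^\times\subset(K\otimes_\Q k)^\times$. I would argue that an element of $(K_1\otimes_\Q k)^\times\otimes(K_2\otimes_\Q k)^\times$ lying in the central $k^\times$ forces each factor to be a scalar: concretely, writing $t_1\otimes t_2=c$ with $c\in k^\times$, applying the two projections $K\otimes_\Q k\to K_i\otimes_\Q k$ coming from the $K$-algebra structure, or comparing the action of $\co_{K_1}$ and $\co_{K_2}$, shows $t_1\in(K_1\cap k\text{-scalars})$ and $t_2\in(K_2\cap k\text{-scalars})$, hence $t_1,t_2\in k^\times$ with $t_1t_2=c$. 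The constraint $\nu_1(t_1)=\nu_2(t_2)$ (i.e. $t_1^2=t_2^2$) then forces $t_1=\pm t_2$, and chasing $\eta$ again pins down $t=(z,z)$ for a single $z\in k^\times$. I expect this linear-algebra step—showing a decomposable tensor lying in the center is a product of scalars—to be the main obstacle, and the place where one must be careful that $K=K_1\otimes_\Q K_2$ is a field (so $K\otimes_\Q k$ is a product of fields) rather than merely a ring.

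\emph{Surjectivity of $\eta$.} Finally I would prove $\eta(T(k))=S(k)$. Given $z\in S(k)$, so $\mathrm{Nm}_{K/F}(z)=1$, the goal is to lift $z$ to a pair $(t_1,t_2)$ with $t_1\otimes t_2$ proportional to $z$. Here I would invoke Hilbert's Theorem 90 (or its adelic/idelic analogue) for the relevant norm-one tori: the condition $\mathrm{Nm}_{K/F}(z)=1$ is exactly what is needed to write $z$ in the required form, and over each of $\R$, $\Q_\ell$, and the finite/full adele rings one has the necessary vanishing of the relevant $H^1$ or surjectivity of the local/global norm maps that Theorem \ref{Thm:quadratic model}'s proof already exploited (surjectivity of $\widehat{\co}_K^\times\to\widehat{\co}_F^\times$). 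For the adelic cases $k=\A$ and $k=\A_f$, exactness follows from the local statements over each $\Q_\ell$ together with the fact that the relevant cohomology obstructions vanish, so no global reciprocity input beyond what is already available is required; I would assemble these and check that the restricted-product structure is respected by $\eta$, which is clear since $U$ and $V$ factor as $\prod_\ell U_\ell$ and $\prod_\ell V_\ell$.
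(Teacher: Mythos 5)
Your treatment of the field case is essentially sound, and your direct argument for exactness in the middle is a pleasant uniform alternative to the paper's route: you show a decomposable tensor $t_1\otimes t_2$ lying in the central $k^\times$ must have scalar factors, because the subalgebras $K_1\otimes_\Q k$ and $K_2\otimes_\Q k$ of $K\otimes_\Q k$ meet only in $k$ (this works verbatim for any field of characteristic $0$, split or not), whereas the paper first diagonalizes $T(k)\iso (k^\times)^3$ and $S(k)\iso (k^\times)^2$ over an algebraically closed field and then descends via Hilbert 90. One correction to your surjectivity step over fields: the vanishing you need is $H^1(k,\mathbb{G}_m)=1$ for the \emph{kernel} torus $\mathbb{G}_m$, applied to the long exact sequence attached to $1\map{}\mathbb{G}_m\map{}T\map{\eta}S\map{}1$ once $\eta$ is known to be surjective as a map of algebraic groups (dimension count plus connectedness of $S$, or the paper's diagonalization). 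Invoking ``Hilbert 90 for norm-one tori'' is off target: the $H^1$ of a norm-one torus such as $S$ is generally nonzero, and the condition $\mathrm{Nm}_{K/F}(z)=1$ by itself does not ``write $z$ in the required form.''

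The genuine gap is in the adelic cases. Surjectivity of $T(\Q_v)\map{}S(\Q_v)$ at every place does \emph{not} formally yield surjectivity of $T(\A)\map{}S(\A)$: given $s=(s_v)\in S(\A)$ you can lift each component to $t_v\in T(\Q_v)$, but for $(t_v)$ to be an adelic point you need $t_\ell\in U_\ell$ for almost all $\ell$. Your appeal to the factorizations $U=\prod_\ell U_\ell$ and $V=\prod_\ell V_\ell$ begs the question here, because $V_\ell$ is \emph{defined} as $\eta(U_\ell)$, while the integrality condition defining $S(\A_f)$ is $s_\ell\in S(\Q_\ell)\cap \co_{K,\ell}^\times$; what must be proved is that $\eta(U_\ell)$ exhausts this maximal compact subgroup for almost all $\ell$. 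That statement is true --- for $\ell$ unramified in $K$ it follows from surjectivity of norms on unit groups in unramified local extensions (essentially the fact that $\widehat{\co}_K^\times\map{}\widehat{\co}_F^\times$ is surjective, which you mention but do not connect to this point), or from smoothness of the integral models together with Lang's theorem --- but it is a real step, not a formality. The paper avoids the issue entirely: it proves the case $k=\A$ by the same descent argument using the adelic form of Hilbert 90 \cite[Corollary 8.1.3]{neukirch}, in which the almost-everywhere integrality of the lift comes for free, and then deduces the case $k=\A_f$ from $k=\A$. To repair your version, either add the integral local lifting lemma just described, or replace the place-by-place assembly with the adelic Hilbert 90 citation.
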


\begin{proof}
If  $k$ is an algebraically closed of characteristic $0$ then the claim is proved by explicit calculation
after choosing diagonalizations  $T(k)\iso (k^\times)^3$ and $S(k)\iso(k^\times)^2$,
and we leave this as an exercise for the reader.  The exactness for an arbitrary field of characteristic
$0$ is immediate from  the algebraically closed case and Hilbert's Theorem 90.
The proof for $k=\A$ is proved the same way,
using the  adelic form of Hilbert's Theorem 90 \cite[Corollary 8.1.3]{neukirch}, and the exactness for
$k=\A$ implies the exactness for $k=\A_f$.
\end{proof}

Using Proposition \ref{Prop 2.3.1} and the inclusion $\A_f^\times\subset T(\Q)U$, it follows that
the homomorphism  (\ref{unitary map}) induces an isomorphism
\begin{equation}\label{transfer}
T(\Q)\backslash T(\A_f) /U \iso S(\Q)\backslash S(\A_f)/V.
\end{equation}
For $i\in\{1,2\}$ let $\mathrm{Pic}(\co_{K_i})$ be the ideal class group of $K_i$ and set
$$
\Gamma = \mathrm{Pic}(\co_{K_1})  \times \mathrm{Pic}(\co_{K_2}).
$$
Define a homomorphism
\begin{equation}\label{torus ideals}
T(\Q)\backslash T(\A_f) /U \map{}\Gamma
\end{equation}
by sending   $(t_1,t_2)\in T(\A_f)$ to the pair of ideal classes $(\mathfrak{a}_1 , \mathfrak{a}_2 ) \in \Gamma$
determined  by $\mathfrak{a}_i \widehat{\co}_{K_i}= t_i\widehat{\co}_{K_i}$.

\begin{Prop}\label{Prop:index}
The homomorphism (\ref{torus ideals}) is an isomorphism.
\end{Prop}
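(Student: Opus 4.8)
The plan is to recognize $(\ref{torus ideals})$ as the restriction of the classical idele–class–group description to the subgroup $T(\A_f)$, and then to prove bijectivity in two steps. Recall the standard isomorphism $\widehat{K_i}^\times/(K_i^\times\widehat{\co}_{K_i}^\times)\cong \mathrm{Pic}(\co_{K_i})$, under which a finite idele $t_i$ maps to the class of the ideal $\mathfrak a_i$ with $\mathfrak a_i\widehat{\co}_{K_i}=t_i\widehat{\co}_{K_i}$; denote this surjection by $\pi_i$. Taking the product over $i\in\{1,2\}$ gives a surjection $T_1(\A_f)\times T_2(\A_f)\to\Gamma$ with kernel $(K_1^\times\widehat{\co}_{K_1}^\times)\times(K_2^\times\widehat{\co}_{K_2}^\times)$, and $(\ref{torus ideals})$ is exactly its restriction to $T(\A_f)=\{(t_1,t_2):\nu_1(t_1)=\nu_2(t_2)\}$. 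Since $T(\Q)\subset K_1^\times\times K_2^\times$ preserves ideal classes and $U\subset\widehat{\co}_{K_1}^\times\times\widehat{\co}_{K_2}^\times$ preserves ideals, the map descends to a homomorphism of abelian groups $T(\Q)\backslash T(\A_f)/U\to\Gamma$, and it remains to prove injectivity and surjectivity.

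For injectivity, suppose $(t_1,t_2),(t_1',t_2')\in T(\A_f)$ have the same image in $\Gamma$; then $t_i'=a_it_iu_i$ for some $a_i\in K_i^\times$ and $u_i\in\widehat{\co}_{K_i}^\times$, and the point is that the norm condition forces $(a_1,a_2)\in T(\Q)$ and $(u_1,u_2)\in U$. Applying $\nu_i$ and using $\nu_1(t_1)=\nu_2(t_2)$ together with $\nu_1(t_1')=\nu_2(t_2')$ yields
\[
\frac{\nu_1(a_1)}{\nu_2(a_2)}=\frac{\nu_2(u_2)}{\nu_1(u_1)}.
\]
The left-hand side is the diagonal image of a positive rational number (norms from an imaginary quadratic field are positive), while the right-hand side lies in $\widehat{\Z}^\times$; since a positive rational that is a unit at every finite prime equals $1$, both sides are $1$. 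Hence $\nu_1(a_1)=\nu_2(a_2)$ and $\nu_1(u_1)=\nu_2(u_2)$, so $(a_1,a_2)\in T(\Q)$ and $(u_1,u_2)\in U$, and the two triples lie in the same double coset.

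The harder half is surjectivity. Fix $(\mathfrak a_1,\mathfrak a_2)\in\Gamma$ and choose lifts $s_i\in\widehat{K_i}^\times$ with $\pi_i(s_i)=\mathfrak a_i$. Finding a preimage in $T(\A_f)$ amounts to replacing each $s_i$ by $s_iw_i$ with $w_i\in K_i^\times\widehat{\co}_{K_i}^\times$ so that the two norms agree, i.e. to showing that $c:=\nu_1(s_1)\nu_2(s_2)^{-1}$ lies in $N_1'N_2'$, where $N_i'=\nu_i(K_i^\times\widehat{\co}_{K_i}^\times)$. The genuine obstacle is that this product of norm groups is \emph{not} all of $\widehat{\Q}^\times$: at a prime $\ell$ inert in both $K_1$ and $K_2$ the two local norm groups coincide with the subgroup of even valuation. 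Two facts rescue the argument. First, coprimality of $d_1$ and $d_2$ forbids a prime ramifying in both fields, which is the one truly unfixable configuration; at a prime nonsplit in both but unramified in at least one, the two local norm groups are distinct index-two subgroups whose product is everything, while at a prime split in $K_1$ or $K_2$ there is no condition at all. Second, the specific $c$ to be represented is a ratio of norms, so at the inert–inert primes $c_\ell$ automatically has even valuation and hence still lies in the local norm group. With local solvability thus in hand at every prime, the Hasse norm theorem for the cyclic extensions $K_i/\Q$, together with approximation, supplies the global factors realizing $c\in N_1'N_2'$. Combined with injectivity, this shows $(\ref{torus ideals})$ is an isomorphism.
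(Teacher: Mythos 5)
Your injectivity argument is correct and complete (it in fact supplies the detail the paper waves off with ``the injectivity follows easily''), and your reduction of surjectivity to the claim $c=\nu_1(s_1)\nu_2(s_2)^{-1}\in N_1'N_2'$ is also correct. The genuine gap is the final sentence. The Hasse norm theorem concerns a single cyclic extension and a \emph{rational} element: a rational number that is everywhere locally a norm from $K_i$ is a global norm from $K_i$. What you need is a local--global principle for the \emph{product} of two norm groups, applied moreover to an idele $c$ with the constraint that the factors come from $K_i^\times\widehat{\co}_{K_i}^\times$ rather than all of $\widehat{K}_i^\times$. Neither follows from ``Hasse norm theorem together with approximation'': if you try to approximate, the condition that $c\,n_2^{-1}$ be a local norm from $K_{1,\ell}$ is imposed at the infinitely many primes $\ell$ inert in $K_1$, and weak approximation gives no control over the support of a global norm $n_2$ outside a finite set, so the sketched argument does not close. (One can salvage an approximation argument using auxiliary primes chosen by Chebotarev, but that is substantive extra work, not a citation.)

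Two ingredients are missing, and they are exactly what the paper's proof supplies. First, the coprimality fact $\nu_1(\widehat{\co}_{K_1}^\times)\cdot\nu_2(\widehat{\co}_{K_2}^\times)=\widehat{\Z}^\times$ (at each $\ell$ at least one $K_i$ is unramified, so its local unit norms are all of $\Z_\ell^\times$): you state the local version of this but never use it to absorb the unit part of $c$. It reduces your claim to showing that the positive rational generator $q$ of the fractional ideal attached to $c$ lies in $\mathrm{Nm}_{K_1/\Q}(K_1^\times)\cdot \mathrm{Nm}_{K_2/\Q}(K_2^\times)$. Second, the actual local--global principle for that rational statement, which is Hasse--Minkowski for the quaternary form $\mathrm{Nm}_{K_1}(x)-q\,\mathrm{Nm}_{K_2}(y)$ --- precisely the form $Q=Q_1-Q_2$ on $K_1\oplus K_2$ whose isotropy the paper verifies by a local Hilbert-symbol computation, again using $\gcd(d_1,d_2)=1$ (and note that a nontrivial isotropic vector has both components nonzero since each $\mathrm{Nm}_{K_i}$ is definite, so isotropy really does give $q$ as a ratio, hence a product, of norms). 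Your local analysis is otherwise on target, including the key observation that $c_\ell$ has even valuation at inert--inert primes; but beware that at such primes the two local norm groups \emph{coincide} rather than being ``distinct index-two subgroups whose product is everything'' --- that description is correct only when the prime is ramified in exactly one field and nonsplit in the other. The global assembly is the heart of the proposition, and as written it is missing.
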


\begin{proof}
If we identify $\mathrm{Pic}(\co_{K_i}) \iso K_i^\times\backslash \widehat{K}_i^\times/\widehat{\co}_{K_i}$
in the usual way then the map (\ref{torus ideals}) is identified with the map
$$
T(\Q)\backslash T(\A_f) /U \map{} \big(K_1^\times\backslash \widehat{K}_1^\times/\widehat{\co}_{K_1} \big)
\times
\big( K_2^\times\backslash \widehat{K}_2^\times/\widehat{\co}_{K_2} \big)
$$
defined by $(t_1,t_1) \mapsto (t_1, t_2)$, and the   injectivity  follows easily.

The surjectivity of (\ref{torus ideals}) is less obvious. For $i\in\{1,2\}$  fix a fractional $\co_{K_i}$-ideal
$\mathfrak{a}_i$, set $a_i=\mathrm{Nm}_{K_i/\Q}(\mathfrak{a}_i)$,
and define a quadratic form on the $\Q$-vector space $K_i$
$$
Q_i(x)= a_i \cdot \mathrm{Nm}_{K_i/\Q}(x).
$$
Let $W$ be the $\Q$-vector space  $K_1\oplus K_2$ endowed with the quadratic form
$$
Q(x_1,x_2)=Q_1(x_1)-Q_2(x_2).
$$
The claim is that $(W,Q)$ represents $0$, and by the Hasse-Minkowski theorem it suffices to prove this
everywhere locally.  As $W\otimes_\Q\R$ has signature $(2,2)$ it clearly represents $0$.
Fix a prime $\ell <\infty$.   The quadratic space $W_\ell$ has discriminant
$d_1d_2\in\Q_\ell^\times/(\Q_\ell^\times)^2$ and Hasse invariant
$$
(a_1,d_1)_\ell  \cdot  (a_2,d_2)_\ell  \cdot  (d_1,-d_2)_\ell   \cdot  ( -1,-1)_\ell.
$$
If $d_1 d_2$ is not a square in $\Q_\ell^\times$ then $W_\ell$ represents $0$ by
\cite[Chapter IV.2.2]{serre73}.   Thus we may assume that $d_1=d_2$ up to a square in
$\Q_\ell^\times$.  As $a_i$ is the norm of a fractional ideal in  $K_{i,\ell}$ we may factor
$a_i=u_i\cdot b_i$ with $b_i$ equal to the norm of some element in $K_\ell^\times$ and
$u_i\in\Z_\ell^\times$.  As we assume that $\gcd(d_1,d_2)=1$, at least one of $K_1$ and
$K_2$ is unramified at $\ell$.    Thus $u_1$ is either a norm from $K_{1,\ell}^\times$ or a norm from
$K_{2,\ell}^\times$, and so either $(u_1,d_1)_\ell=1$  or $(u_1,d_2)_\ell=1$.  But
$(u_1,d_1)_\ell=(u_1,d_2)_\ell$ as $d_1=d_2$ up to  a square.  Thus we have
$$
(a_1,d_1)_\ell = (u_1,d_1)_\ell =1.
$$
The same argument shows that $(a_2,d_2)_\ell=1$, and as $(d_1,-d_2)_\ell =1$
is obvious we find that the Hasse  invariant of $W_\ell$ is $(-1,-1)_\ell$.  Again by
\cite[Chapter IV.2.2]{serre73} the quadratic space $W_\ell$ represents $0$.
Having proved that the quadratic space $(W,Q)$ represents $0$, we deduce that there is an
$m\in\Q^\times$ that is  represented both by $Q_1$ and by $Q_2$.  Choosing
$r_i\in K_i^\times$ such that $Q_1(r_1)=m=Q_2(r_2)$ we see  that the fractional ideal
$\mathfrak{b}_i=\mathfrak{a}_i r_i$ lies in the same ideal class as $\mathfrak{a}_i$, and that
\begin{equation}\label{common norm}
\mathrm{Nm}_{K_1/\Q}(\mathfrak{b}_1) = \mathrm{Nm}_{K_2/\Q}(\mathfrak{b}_2) .
\end{equation}
Thus we have proved that every element of $\Gamma$ has the form
$( \mathfrak{b}_1  ,  \mathfrak{b}_2 )$ with  $\mathfrak{b}_1$ and $\mathfrak{b}_2$
satisfying (\ref{common norm}).  Now choose $t_i\in \widehat{K}_i^\times$ satisfying
$$
t_i \widehat{\co}_{K,i}^\times = \mathfrak{b}_i\widehat{\co}_{K,i}^\times.
$$
The relation (\ref{common norm}) implies that there is a $u\in \widehat{\Z}^\times$ such that
$$
\mathrm{Nm}_{K_1/\Q}(t_1) = u\cdot \mathrm{Nm}_{K_2/\Q}(t_2).
$$
The hypothesis  $\gcd(d_1,d_2)=1$ implies that
$$
\widehat{\Z}^\times  = \mathrm{Nm}_{K_1/\Q}(\widehat{\co}_{K_1}^\times)  \cdot
 \mathrm{Nm}_{K_2/\Q}(\widehat{\co}_{K_2}^\times).
$$
Factoring $u$ as the product of the norm of some  $v_1^{-1}\in \widehat{\co}_{K_1}^\times$ and the  norm of
some  $v_2\in \widehat{\co}_{K_2}^\times$ we may then replace $t_i$ by $t_iv_i$ so that $(t_1,t_2)\in T(\A_f)$.
This proves the surjectivity of (\ref{torus ideals}), and completes the proof.
\end{proof}

For any scheme $S$ the  group $\Gamma$ acts on the set $[\mathcal{X}(S)]$ on the right  by Serre's tensor construction
\cite[Section 7]{conrad04}
$$
  (\mathbf{E}_1,\mathbf{E}_2) \otimes  ( \mathfrak{a}_1 , \mathfrak{a}_2 )
  = (\mathbf{E}_1\otimes \mathfrak{a}_1, \mathbf{E}_2\otimes  \mathfrak{a}_2 )
$$
(the tensor products on the right are over $\co_{K_1}$ and $\co_{K_2}$, respectively).

\begin{Rem}\label{four orbits}
The classical theory of complex multiplication implies that the action of $\Gamma$ on
$[\mathcal{X}(\C)]$ breaks $[\mathcal{X}(\C)]$  into a disjoint union of four simply transitive orbits.
The orbits are indexed by the set of pairs
 $$
\{ (\pi_1,\pi_2) :  \pi_i \in \Hom_{\Q-\mathrm{alg}}( K_i , \C)  \},
 $$
and the isomorphism class of a CM pair $(\mathbf{E}_1,\mathbf{E}_2) \in \mathcal{X}(\C)$
lies in the orbit indexed by $(\pi_1,\pi_2)$  if and only if  the action of $K_i$ on the $1$-dimensional $\C$-vector
space $\mathrm{Lie}(E_i)$  is through $\pi_i$ for each $i\in\{1,2\}$.
\end{Rem}

Suppose  $(\mathbf{E}_1,\mathbf{E}_2)$ is a supersingular CM pair over an algebraically closed
field of nonzero characteristic. If follows from (\ref{basic hermitian}) and  \cite[Corollary V.6.1.3]{Knus} that the
restriction of the action (\ref{K action})   to the subgroup $S(\Q) \subset K^\times$ identifies
 $$
 S \iso \mathrm{Res}_{F/\Q} \mathrm{SO}( V(\mathbf{E}_1,\mathbf{E}_2) , \cmdeg).
 $$
 The group  $T(\Q)$ then  acts on  $V(\mathbf{E}_1,\mathbf{E}_2)$ through orthogonal transformations
 by composing with  the homomorphism $\eta:T\map{}S$, and this action is given by the simple formula
\begin{equation*}
t \action j = \kappa_2(t_2)\circ j \circ \kappa_1(t_1)^{-1}
\end{equation*}
for   $t=(t_1,t_2)\in T(\Q)$.    To understand the relation between the action of $T$ on
$V(\mathbf{E}_1,\mathbf{E}_2)$ and the action of $\Gamma$ on the set of all supersingular CM pairs,
fix a $t=(t_1,t_2) \in T(\A_f)$ and let $(\mathfrak{a}_1,\mathfrak{a}_2)$ be the image of $t$
under (\ref{torus ideals}).   For $i\in \{1,2\}$ there is an $\co_K$-linear quasi-isogeny
$$
f_i \in \Hom( \mathbf{E}_i ,  \mathbf{E}_i\otimes\mathfrak{a}_i )\otimes_\Z\Q
$$
defined by $f_i(x)=x\otimes 1$, and the $\widehat{K}$-linear isomorphism of
$\widehat{F}$-quadratic spaces
$$
V(\mathbf{E}_1,\mathbf{E}_2)
\iso
V(\mathbf{E}_1\otimes\mathfrak{a}_1,\mathbf{E}_2\otimes\mathfrak{a}_2)
$$
defined by $j\mapsto  f_2\circ j \circ f_1^{-1}$ identifies
$\widehat{L}(\mathbf{E}_1\otimes\mathfrak{a}_1,\mathbf{E}_2\otimes\mathfrak{a}_2)$
with the $\widehat{\co}_K$-submodule
$$
t\action \widehat{L}(\mathbf{E}_1,\mathbf{E}_2)
= \{ \kappa_2(t_2)\circ j\circ \kappa_1(t_1)^{-1} : j\in \widehat{L}(\mathbf{E}_1,\mathbf{E}_2)  \}
$$
of $\widehat{V}(\mathbf{E}_1,\mathbf{E}_2)$.

As above, let $(\mathbf{E}_1,\mathbf{E}_2)$ be a supersingular CM pair over an algebraically closed \
field of nonzero characteristic. Given a prime $\ell$ and an $\alpha\in F_{\ell}^\times$,  the
\emph{orbital integral} at $\ell$ is defined  by
\begin{equation}\label{orbital}
O_\ell(\alpha,\mathbf{E}_1,\mathbf{E}_2)
= \sum_{t \in \Q_\ell^\times \backslash T(\Q_\ell)/ U_\ell }
\mathbf{1}_{L_\ell(\mathbf{E}_1,\mathbf{E}_2)} (t^{-1} \action j )
\end{equation}
if there exists a $j\in V_\ell(\mathbf{E}_1,\mathbf{E}_2)$ satisfying $\cmdeg(j)=\alpha$.
If no such $j$ exists then set $O_\ell(\alpha,\mathbf{E}_1,\mathbf{E}_2) =0$.
As $T(\Q_\ell)$ acts transitively on the set of all
$j\in V_\ell(\mathbf{E}_1,\mathbf{E}_2)$ for which $\cmdeg(j)=\alpha$,
the orbital integral does not depend on the choice of $j$ used in its definition.
If $t \in T(\A_f)$ has image $(\mathfrak{a}_1,\mathfrak{a}_2) \in \Gamma$ under the isomorphism
(\ref{torus ideals}) then
\begin{eqnarray*}
O_\ell(\alpha,\mathbf{E}_1 \otimes \mathfrak{a}_1,\mathbf{E} \otimes \mathfrak{a}_2)
&=&
\sum_{ s\in \Q_\ell^\times\backslash T(\Q_\ell) /U_\ell }
\mathbf{1}_{L_\ell(\mathbf{E}_1\otimes\mathfrak{a}_1,\mathbf{E}_2\otimes\mathfrak{a}_2)}(s^{-1} \action j) \\
& = &
\sum_{ s\in \Q_\ell^\times\backslash T(\Q_\ell) /U_\ell }
 \mathbf{1}_{ t\action L_\ell(\mathbf{E}_1,\mathbf{E}_2)}(s^{-1} \action j) \\
& = & O_\ell(\alpha,\mathbf{E}_1,\mathbf{E}_2),
\end{eqnarray*}
and so  the orbital integral is constant on $\Gamma$-orbits.

\begin{Lem}\label{C stab}
Let $k$ be an algebraically closed field and recall $\mathbf{w}_i = |\co_{K_i}^\times|$.
 Every  $x\in [\mathcal{X}(k)]$ has trivial stabilizer in $\Gamma$ and satisfies
$$
 \big| \Aut_{\mathcal{X}(k)}(x)   \big| =\mathbf{w}_1\mathbf{w}_2.
$$
\end{Lem}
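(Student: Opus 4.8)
The claim has two parts: that $\Aut_{\mathcal{X}(k)}(x) = \mathbf{w}_1\mathbf{w}_2$ for every $x = (\mathbf{E}_1,\mathbf{E}_2)$, and that the $\Gamma$-action is free. The plan is to treat these in turn, reducing the second to the first via the group-theoretic apparatus of Section \ref{sect2.3}.

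For the automorphism count, I would argue that an automorphism of the CM pair $(\mathbf{E}_1,\mathbf{E}_2)$ in $\mathcal{X}(k)$ is exactly a pair $(f_1,f_2)$ with each $f_i$ an $\co_{K_i}$-linear automorphism of $E_i$. Since $\kappa_i:\co_{K_i}\map{}\End(E_i)$ gives an action of a quadratic imaginary maximal order, the $\co_{K_i}$-linear automorphisms of $E_i$ are precisely the units $\kappa_i(\co_{K_i}^\times)$; indeed any such automorphism lies in $\End(E_i)$, commutes with $\kappa_i(\co_{K_i})$, and being an automorphism of finite order together with $\co_{K_i}$-linearity forces it into the image of $\co_{K_i}^\times$. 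Hence $\Aut_{\mathcal{X}(k)}(x) = \co_{K_1}^\times\times\co_{K_2}^\times$, which has order $\mathbf{w}_1\mathbf{w}_2$. This part works uniformly whether $k$ has characteristic $0$ (where the CM structure pins down $\End(E_i)\otimes\Q\iso K_i$) or positive characteristic (where the curves may be supersingular, but the $\co_{K_i}$-linearity constraint still cuts the automorphism group down to the units of the order acting through $\kappa_i$).

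For freeness of the $\Gamma$-action, the natural approach is to show that a nontrivial stabilizer would produce an extra isomorphism not accounted for above. If $(\mathfrak{a}_1,\mathfrak{a}_2)\in\Gamma$ stabilizes $x$, then Serre's tensor construction gives an isomorphism $(\mathbf{E}_1,\mathbf{E}_2)\iso(\mathbf{E}_1\otimes\mathfrak{a}_1,\mathbf{E}_2\otimes\mathfrak{a}_2)$, i.e. each $E_i\iso E_i\otimes\mathfrak{a}_i$ as elliptic curves with $\co_{K_i}$-action. By the classical theory of complex multiplication (in characteristic $0$; via the reduction theory and the dictionary with ideal classes in positive characteristic), such an isomorphism forces $\mathfrak{a}_i$ to be principal, hence trivial in $\mathrm{Pic}(\co_{K_i})$. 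I would phrase this cleanly using the identification $\mathrm{Pic}(\co_{K_i})\iso K_i^\times\backslash\widehat{K}_i^\times/\widehat{\co}_{K_i}^\times$, under which the tensor construction is simply multiplication by the idele representing $\mathfrak{a}_i$, and an isomorphism of the tensored pair with the original amounts to that idele lying in $K_i^\times\widehat{\co}_{K_i}^\times$.

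The main obstacle is the positive-characteristic (supersingular) case of freeness, where the simple-transitivity statement of Remark \ref{four orbits} is a characteristic-$0$ statement and does not directly apply. Here I would lift: choose characteristic-$0$ CM pairs reducing to $(\mathbf{E}_1,\mathbf{E}_2)$, or work directly with the Dieudonn\'e/quasi-isogeny description used in the proof of Lemma \ref{Lem:CM model I}, so that the $\Gamma$-action on supersingular pairs is still governed by the ideal-class bookkeeping of Proposition \ref{Prop:index}. The key point to verify is that the isomorphism $E_i\iso E_i\otimes\mathfrak{a}_i$, when it exists in characteristic $p$, still detects principality of $\mathfrak{a}_i$ — this follows because the isomorphism is necessarily $\co_{K_i}$-linear and hence, after passing to prime-to-$p$ Tate modules and the $p$-adic Dieudonn\'e module, corresponds to an adelic element of $\widehat{K}_i^\times$ lying in $K_i^\times\widehat{\co}_{K_i}^\times$. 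Once freeness is established in both characteristics, the lemma is complete.
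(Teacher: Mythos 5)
Your proposal is correct, and its first half matches the paper: an $\co_{K_i}$-linear automorphism of $E_i$ lies in the centralizer of $\kappa_i(\co_{K_i})$ in $\End(E_i)$, which is an order in $K_i$ containing the maximal order $\co_{K_i}$ and hence equal to it --- note that it is this maximality argument, not finiteness of order, that does the work in the supersingular case --- so $\Aut_{\mathcal{X}(k)}(x)\iso \co_{K_1}^\times\times\co_{K_2}^\times$, exactly as in the paper. For freeness, however, you take a genuinely different route. The paper argues uniformly in all characteristics in one module-theoretic step: an isomorphism $(\mathbf{E}_1,\mathbf{E}_2)\iso(\mathbf{E}_1\otimes\mathfrak{a}_1,\mathbf{E}_2\otimes\mathfrak{a}_2)$ gives $\Hom_{\co_{K_i}}(E_i,E_i)\iso\Hom_{\co_{K_i}}(E_i,E_i\otimes\mathfrak{a}_i)$, the latter is $\End_{\co_{K_i}}(E_i)\otimes\mathfrak{a}_i$ by \cite[Lemma 7.14]{conrad04}, and since $\End_{\co_{K_i}}(E_i)\iso\co_{K_i}$ (the same centralizer fact as above) this reads $\co_{K_i}\iso\mathfrak{a}_i$ as $\co_{K_i}$-modules, i.e.\ $\mathfrak{a}_i$ is principal --- with no case split on the characteristic or on ordinary versus supersingular. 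Your two-pronged plan can be completed, and your adelic variant is essentially the paper's argument unfolded: composing the isomorphism $E_i\iso E_i\otimes\mathfrak{a}_i$ with the inverse of the canonical quasi-isogeny $x\mapsto x\otimes 1$ gives a $K_i$-linear quasi-endomorphism, hence an element $\lambda\in K_i^\times$ (centralizer again), and since $\mathrm{Ta}_\ell(E_i)$, resp.\ the Dieudonn\'e module, is free of rank one over $\co_{K_i,\ell}$, resp.\ $\co_{K_i,p}$, one gets $\lambda\widehat{\co}_{K_i}=\mathfrak{a}_i\widehat{\co}_{K_i}$, so $\mathfrak{a}_i$ is principal. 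One caution about your lifting alternative: when $p$ is ramified in $K_1$ or $K_2$ the reduction map $[\mathcal{X}(\C_p)]\map{}[\mathcal{X}(\F_p^\alg)]$ is two-to-one (see the proof of Proposition \ref{Prop:naked count}), so freeness in characteristic $0$ does not descend formally; a stabilizing $\gamma\in\Gamma$ could a priori interchange the two lifts of a fixed point, and you must rule this out --- for instance by observing that the two lifts have different CM types while the $\Gamma$-action preserves the CM type (Remark \ref{four orbits}). In sum, the paper's route buys brevity and characteristic-independence; yours buys a concrete ideal-class interpretation of the stabilizer, at the cost of this extra case analysis, with your Tate-module/Dieudonn\'e version the cleaner of your two options.
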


\begin{proof}
Suppose we have a $(\mathfrak{a}_1,\mathfrak{a}_2 ) \in \Gamma$ and a CM pair
$(\mathbf{E}_1,\mathbf{E}_2)$ defined over $k$ with the property that
$$
(\mathbf{E}_1,\mathbf{E}_2) \iso
(\mathbf{E}_1\otimes  \mathfrak{a}_1, \mathbf{E}_2\otimes  \mathfrak{a}_2 ).
$$
In particular there is an isomorphism of $\co_{K_i}$-modules
$$
\Hom_{\co_{K_i}}(E_i, E_i) \iso \Hom_{\co_{K_i}}(E_i, E_i\otimes \mathfrak{a}_i)
$$
and hence, by \cite[Lemma 7.14]{conrad04},
$$
\End_{\co_{K_i}}(E_i) \iso  \End_{\co_{K_i}}(E_i)  \otimes \mathfrak{a}_i .
$$
Both as a ring and as  an $\co_{K_i}$-module $\End_{\co_{K_i}}(E_i)  \iso \co_{K_i}$, and so
$\mathfrak{a}_i\iso \co_{K_i}$ as an $\co_{K_i}$-module.  Thus $\mathfrak{a}_i$ is a principal ideal.
The equality $|\Aut_{\mathcal{X}(k)}(x) | =\mathbf{w}_1\mathbf{w}_2$ is clear from
$\Aut_{\co_{K_i}}(E_i)\iso \co_{K_i}^\times$.
\end{proof}

\begin{Lem}\label{Lem:unit calc}
We have the equalities
$$
| S(\Q)\cap V |  =  | (T(\Q)\cap U)/\{\pm 1\} | = \frac{\mathbf{w}_1 \mathbf{w}_2}{2}.
$$
\end{Lem}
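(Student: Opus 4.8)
The plan is to compute the middle group $T(\Q)\cap U$ by hand, read off the second equality directly, and then transport the count across $\eta$ to reach $S(\Q)\cap V$. First I would determine $T(\Q)\cap U$: a pair $(t_1,t_2)\in T(\Q)$ lies in $U$ precisely when each $t_i\in K_i^\times\cap\widehat{\co}_{K_i}^\times=\co_{K_i}^\times$. Because the norm form of an imaginary quadratic field is positive definite, every element of $\co_{K_i}^\times$ has $\mathrm{Nm}_{K_i/\Q}$ equal to $1$, so the constraint $\nu_1(t_1)=\nu_2(t_2)$ defining $T$ holds automatically, and
$$
T(\Q)\cap U=\co_{K_1}^\times\times\co_{K_2}^\times,
$$
a group of order $\mathbf{w}_1\mathbf{w}_2$. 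The diagonally embedded $\{\pm 1\}$ lies in this group and acts freely, since no nonzero element satisfies $t=-t$; therefore $|(T(\Q)\cap U)/\{\pm 1\}|=\mathbf{w}_1\mathbf{w}_2/2$, which is the second claimed equality.

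Next I would compare the two outer quantities through $\eta$. If $t\in T(\Q)\cap U$ then $\eta(t)\in\eta(T(\Q))\cap\eta(U)=S(\Q)\cap V$, so $\eta$ restricts to a homomorphism $T(\Q)\cap U\to S(\Q)\cap V$. By Proposition \ref{Prop 2.3.1} the kernel of $\eta$ on $T(\A_f)$ is the diagonal copy of $\A_f^\times$, so the kernel of this restriction is $\Q^\times\cap U$; a diagonal rational that is an integral unit at every finite place is $\pm 1$, so the kernel is exactly $\{\pm 1\}$. This yields an injection $(T(\Q)\cap U)/\{\pm 1\}\hookrightarrow S(\Q)\cap V$.

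The remaining and essential point is the surjectivity of this injection, which I expect to be the main obstacle. Given $s\in S(\Q)\cap V$, I would use surjectivity of $\eta:T(\Q)\to S(\Q)$ (Proposition \ref{Prop 2.3.1} for $k=\Q$) to write $s=\eta(t)$ with $t=(t_1,t_2)\in T(\Q)$, and the definition of $V$ to write $s=\eta(u)$ with $u\in U$. Then $\eta(t^{-1}u)=1$ puts $t^{-1}u$ in the diagonal $\A_f^\times$, so $u=(t_1 c,t_2 c)$ for some $c\in\A_f^\times$, and membership $u\in U$ reads $t_i c\in\widehat{\co}_{K_i}^\times$ for both $i$. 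Choosing a positive rational $b$ with $b\widehat{\Z}=c\widehat{\Z}$, this forces the fractional $\co_{K_i}$-ideal generated by $t_i$ to equal $b^{-1}\co_{K_i}$, hence $t_i b\in\co_{K_i}^\times$ for the same $b$ and both $i$. Since $(b,b)$ lies in the diagonal kernel, replacing $t$ by $t\cdot(b,b)$ does not change $\eta(t)=s$ and produces an element of $T(\Q)\cap U$ mapping to $s$. The one thing to verify carefully is that a single rational scalar $b$ simultaneously principalizes the ideals generated by $t_1$ and $t_2$; this works exactly because the adelic scalar $c$ is common to both coordinates. Surjectivity then gives the first equality, completing the proof.
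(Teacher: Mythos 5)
Your proof is correct, and on the first equality it takes a genuinely different route from the paper's. The shared part: like the paper, you identify $T(\Q)\cap U=\co_{K_1}^\times\times\co_{K_2}^\times$ (your observation that the norm condition is automatic because every unit of $\co_{K_i}$ has norm $1$ is the right justification, which the paper leaves implicit), and you use Proposition \ref{Prop 2.3.1} to see that the kernel of $\eta$ on $T(\Q)\cap U$ is $\Q^\times\cap U=\{\pm 1\}$, giving the injection $(T(\Q)\cap U)/\{\pm 1\}\hookrightarrow S(\Q)\cap V$. At that point the paper does \emph{not} prove surjectivity directly; it closes the count from above instead, noting $S(\Q)\cap V\subset S(\Q)\cap\widehat{\co}_K^\times=\mu_K$ (a norm-one element of $K^\times$ that is a unit at every finite place is an $\co_K$-unit all of whose archimedean absolute values equal $1$, hence a root of unity by Kronecker), and then using $|\mu_K|=\mathbf{w}_1\mathbf{w}_2/2$ to sandwich $|S(\Q)\cap V|$ between the two bounds. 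You instead prove surjectivity by an adelic factorization: writing $s=\eta(t)=\eta(u)$ with $t\in T(\Q)$ and $u\in U$, the exactness of $1\to\A_f^\times\to T(\A_f)\to S(\A_f)\to 1$ gives $u=t\cdot(c,c)$ with $c\in\A_f^\times$, and principalizing $c$ by a positive rational $b$ lets you replace $t$ by $t\cdot(b,b)\in T(\Q)\cap U$ without changing its image; the point you flag---that a single $b$ serves both coordinates---is exactly right because $c$ is diagonal. Comparing the two: your route is not more elementary (your surjectivity of $\eta:T(\Q)\map{}S(\Q)$ rests on Hilbert 90 via Proposition \ref{Prop 2.3.1}, where the paper's upper bound rests on Kronecker's theorem), but it buys two small things. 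It exhibits $\eta$ as inducing an isomorphism $(T(\Q)\cap U)/\{\pm 1\}\iso S(\Q)\cap V$ directly, rather than deducing a bijection a posteriori from an injection between finite sets of equal size; and it never computes $\mu_K$, which is the one place the paper's proof quietly uses the running hypothesis $\gcd(d_1,d_2)=1$ (for instance with $K_1=\Q(i)$ and $K_2=\Q(\sqrt{-2})$ one has $|\mu_K|=8>\mathbf{w}_1\mathbf{w}_2/2$ and the sandwich would not close, whereas your argument still applies verbatim).
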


\begin{proof}
The relation $T(\Q) \cap U = \co_{K_1}^\times \times \co_{K_2}^\times$ implies the second equality
in the statement of the lemma.  By Proposition \ref{Prop 2.3.1} the kernel of
$$
T(\Q) \cap U \map{\eta} S(\Q)\cap V
$$
is $\Q^\times\cap U=\{\pm 1\}$, and so  $|S(\Q)\cap V| \ge  \mathbf{w}_1 \mathbf{w}_2/2$. On the other hand,
$$
S(\Q) \cap V \subset S(\Q) \cap \widehat{\co}_K^\times =\mu_K
$$
 (the group of roots of unity in $K$), and so
 $$
 |S(\Q) \cap V| \le |\mu_K| = \frac{\mathbf{w}_1 \mathbf{w}_2}{2}.
 $$
 \end{proof}

\begin{Prop}\label{Prop:unfolding}
Suppose we are given a totally positive $\alpha\in F^\times$, a prime $p$ nonsplit in both $K_1$ and $K_2$,
and a supersingular CM pair  $(\mathbf{E}_1,\mathbf{E}_2)$ over $\F_p^\alg$.  Then
$$
\sum_{ (\mathfrak{a}_1,\mathfrak{a}_2)  \in \Gamma}
\big| \{ j \in L(\mathbf{E}_1 \otimes\mathfrak{a}_1,\mathbf{E}_2\otimes\mathfrak{a}_2)
: \cmdeg(j) = \alpha \} \big|
= \frac{\mathbf{w}_1\mathbf{w}_2}{2 }
\prod_{\ell<\infty }  O_\ell(\alpha,\mathbf{E}_1,\mathbf{E}_2) .
$$
\end{Prop}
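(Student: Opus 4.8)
The plan is to convert the global left-hand count into a product of local orbital integrals by an orbit-counting argument, the key structural input being that $T$ is a torus, so that $T(\A_f)$ is abelian. First I would pass to adelic language. By Proposition \ref{Prop:index} I index the outer sum by representatives $t$ of the double cosets $T(\Q)\backslash T(\A_f)/U\iso\Gamma$. For such a $t$ the $\widehat K$-linear isometry $j\mapsto f_2\circ j\circ f_1^{-1}$ recalled just before the proposition carries $\widehat L(\mathbf{E}_1\otimes\mathfrak{a}_1,\mathbf{E}_2\otimes\mathfrak{a}_2)$ onto $t\action\widehat L(\mathbf{E}_1,\mathbf{E}_2)$ and preserves $\cmdeg$. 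Writing $\widehat L=\widehat L(\mathbf{E}_1,\mathbf{E}_2)$ and $\Omega_\alpha=\{j\in V(\mathbf{E}_1,\mathbf{E}_2):\cmdeg(j)=\alpha\}$, the left-hand side becomes $\sum_t\#\{j\in\Omega_\alpha:t^{-1}\action j\in\widehat L\}$. By (\ref{basic hermitian}) the space $(V(\mathbf{E}_1,\mathbf{E}_2),\cmdeg)$ is one-dimensional over $K$, so when $\Omega_\alpha\neq\emptyset$ it is a torsor under $S(\Q)$ acting through the identification $S\iso\mathrm{Res}_{F/\Q}\SO(V(\mathbf{E}_1,\mathbf{E}_2),\cmdeg)$; fixing $j_0\in\Omega_\alpha$ and using Proposition \ref{Prop 2.3.1}, the map $\gamma\mapsto\gamma\action j_0$ induces $\Q^\times\backslash T(\Q)\iso\Omega_\alpha$, the stabilizer of $j_0$ in $T(\Q)$ being the diagonal $\Q^\times=\ker\eta$.

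The heart of the proof is to count, in two ways, the $T(\Q)$-orbits on the set $\mathcal{P}$ of pairs $(tU,j)$ with $t\in T(\A_f)$, $j\in\Omega_\alpha$ and $t^{-1}\action j\in\widehat L$, where $T(\Q)$ acts diagonally by $\gamma\cdot(tU,j)=(\gamma tU,\gamma\action j)$. Projecting to $j$: each orbit has a unique representative of the form $(sU,j_0)$, and $(sU,j_0)$, $(s'U,j_0)$ are equivalent precisely when $s'\in\Q^\times sU$, whence $\#(\mathcal{P}/T(\Q))=\#(\Q^\times\backslash Y/U)$ for $Y=\{s\in T(\A_f):s^{-1}\action j_0\in\widehat L\}$. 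Projecting instead to $t$: the orbits lying over a fixed double coset $[t]$ are the orbits of $T(\Q)\cap tUt^{-1}$ on $\{j\in\Omega_\alpha:t^{-1}\action j\in\widehat L\}$. Because $T(\A_f)$ is abelian we have $tUt^{-1}=U$ and $T(\Q)\cap U=\co_{K_1}^\times\times\co_{K_2}^\times$; this finite group acts through $\eta$ with kernel $\{\pm1\}$ and image $\mu_K=S(\Q)\cap V$, which acts freely on the torsor $\Omega_\alpha$. By Lemma \ref{Lem:unit calc} every orbit therefore has size $|\mu_K|=\mathbf{w}_1\mathbf{w}_2/2$, so $\#(\mathcal{P}/T(\Q))=\tfrac{2}{\mathbf{w}_1\mathbf{w}_2}\cdot(\text{left-hand side})$.

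To finish I would identify $\#(\Q^\times\backslash Y/U)$ with the product of orbital integrals. Since $O_\ell$ is independent of the chosen local vector, I may compute it at the image of $j_0$, giving $O_\ell=\#(\Q_\ell^\times\backslash Y_\ell/U_\ell)$ and hence $\prod_{\ell<\infty}O_\ell=\#(\A_f^\times\backslash Y/U)$. It therefore suffices to show that the natural surjection $\Q^\times\backslash Y/U\to\A_f^\times\backslash Y/U$ is a bijection, and this is exactly where the class number one of $\Q$ enters: the diagonal $\widehat\Z^\times$ lies in $U$ (again using that $T(\A_f)$ is abelian) and $\A_f^\times=\Q^\times\widehat\Z^\times$, so $\Q^\times csU=\Q^\times sU$ for every $c\in\A_f^\times$. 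Combining the two counts gives $\tfrac{2}{\mathbf{w}_1\mathbf{w}_2}\cdot(\text{left-hand side})=\prod_{\ell<\infty}O_\ell$, which is the claim.

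It remains to treat the degenerate case $\Omega_\alpha=\emptyset$, in which the left-hand side is zero. Here I must check that the right-hand side vanishes as well, and this is the one genuinely delicate point: a nonzero finite orbital integral would make $\alpha$ a local norm from $K/F$ at every finite place, while the total positivity of $\alpha$ (and of the scaling $\beta$ from (\ref{basic hermitian})) supplies the archimedean norm conditions, so the Hasse norm theorem for the quadratic extension $K/F$ would force $\Omega_\alpha\neq\emptyset$, a contradiction. Hence $\prod_{\ell<\infty}O_\ell=0$ and both sides vanish. I expect the main obstacle to be essentially bookkeeping: tracking the factor $\mathbf{w}_1\mathbf{w}_2/2$ consistently through the two orbit counts and handling the $\Q^\times$-versus-$\A_f^\times$ normalization, both of which hinge on $T$ being a torus.
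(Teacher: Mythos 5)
Your proof is correct and is essentially the paper's own argument in different packaging: your double count of $T(\Q)$-orbits on pairs $(tU,j)$ reproduces the paper's chain of equalities (unfolding over $T(\Q)\backslash T(\A_f)/U \iso \Gamma$ via Proposition \ref{Prop:index}, using the simple transitivity of $S(\Q)$ on $\{j : \cmdeg(j)=\alpha\}$, and extracting the factor $\mathbf{w}_1\mathbf{w}_2/2 = |S(\Q)\cap V|$ from Lemma \ref{Lem:unit calc}), and your appeal to $\A_f^\times = \Q^\times\widehat{\Z}^\times$ is the same inclusion $\A_f^\times\subset T(\Q)U$ the paper uses to obtain (\ref{transfer}). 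Your handling of the case $\Omega_\alpha=\emptyset$ by the Hasse norm theorem for the quadratic extension $K/F$ is equivalent to the paper's invocation of the Hasse--Minkowski theorem, since $(V(\mathbf{E}_1,\mathbf{E}_2),\cmdeg)$ is a scaled norm form of $K/F$.
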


\begin{proof}
Using the isomorphisms (\ref{transfer}) and (\ref{torus ideals}), the left hand side of the desired equality
is equal to
\begin{eqnarray}\lefteqn{
 \sum_{ ( \mathfrak{a}_1 , \mathfrak{a}_2) \in \Gamma  }
\sum_{ \substack{ j\in V(\mathbf{E}_1\otimes \mathfrak{a}_1,\mathbf{E}_2\otimes \mathfrak{a}_2) \\
\cmdeg(j)=\alpha } }
 \mathbf{  1 }_{L(\mathbf{E}_1\otimes \mathfrak{a}_1,\mathbf{E}_2\otimes \mathfrak{a}_2)} (j)
  \label{orbital chain I} } \\
 & = &
  \sum_{ t \in T(\Q)\backslash T(\A_f)/U  }
\sum_{ \substack{ j\in V(\mathbf{E}_1,\mathbf{E}_2) \\ \cmdeg(j)=\alpha } }
 \mathbf{  1 }_{t\action \widehat{L}(\mathbf{E}_1 ,\mathbf{E}_2)} (j) \nonumber  \\
 & = &
  \sum_{ s \in S(\Q)\backslash S(\A_f)/V  }
\sum_{ \substack{ j\in V(\mathbf{E}_1,\mathbf{E}_2) \\ \cmdeg(j)=\alpha } }
 \mathbf{  1 }_{s\action \widehat{L}(\mathbf{E}_1 ,\mathbf{E}_2)} (j).\nonumber
 \end{eqnarray}
Let us assume  there is some  $j_0\in  V(\mathbf{E}_1,\mathbf{E}_2)$ for which
$\cmdeg(j_0)=\alpha$.  As the group $S(\Q)$ acts simply transitively on the set of
all such $j_0$,  the final expression in (\ref{orbital chain I})  may be rewritten as
\begin{eqnarray}\lefteqn{
\sum_{ s \in S(\Q)\backslash S(\A_f)/V  }
\sum_{ \substack{ j\in V(\mathbf{E}_1,\mathbf{E}_2) \\ \cmdeg(j)=\alpha } }
 \mathbf{  1 }_{s\action \widehat{L}(\mathbf{E}_1 ,\mathbf{E}_2)} (j)  \nonumber } \\
&=&
  \sum_{ s \in S(\Q)\backslash S(\A_f)/V  }
\sum_{ \gamma \in  S(\Q) }
 \mathbf{  1 }_{s\action \widehat{L}(\mathbf{E}_1 ,\mathbf{E}_2)} (\gamma^{-1} j_0) \nonumber \\
   & = &
|S(\Q)\cap V|  \sum_{ s \in  S(\A_f)/V  }
 \mathbf{  1 }_{s \action \widehat{L}(\mathbf{E}_1 ,\mathbf{E}_2)} ( j_0)  \nonumber   \\
  & = &
\frac{\mathbf{w}_1 \mathbf{w}_2}{2} \prod_{\ell} O_\ell(\alpha,\mathbf{E}_1,\mathbf{E}_2).  \label{orbital chain II}
  \end{eqnarray}
In the final equality we have used Proposition \ref{Prop 2.3.1} and Lemma  \ref{Lem:unit calc}.
If no such $j_0$ exists then (by the Hasse-Minkowski Theorem)
both the first and last expression in (\ref{orbital chain II}) vanish.
\end{proof}


\subsection{Calculation of orbital integrals}
\label{ss:orbital}


Fix a prime $p$ that is nonsplit in both $K_1$ and $K_2$, and a supersingular CM pair
$(\mathbf{E}_1,\mathbf{E}_2)$ over $\F_p^\alg$.   In this subsection we will evaluate the local
orbital integral of (\ref{orbital}) for every prime $\ell$, and so
obtain an explicit formula for the left hand side of the equality of Proposition \ref{Prop:unfolding}.

Let $\mathfrak{p}\subset\co_F$ be the   reflex prime of $(\mathbf{E}_1,\mathbf{E}_2)$ in
the sense of Definition \ref{Def:reflex prime}.  Fix a finite idele $\beta\in \widehat{F}^\times$ such that
$\beta\widehat{\co}_F=\mathfrak{p}\mathfrak{D}^{-1}\widehat{\co}_F$.  As in the introduction,
 for any prime $\ell$ and any fractional $\co_{F,\ell}$-ideal $\mathfrak{b}$ we define
$\rho_\ell(\mathfrak{b})$ to be the number of ideals  $\mathfrak{B}\subset \co_{K,\ell}$
for which $\mathrm{Nm}_{K/F}(\mathfrak{B}) = \mathfrak{b}$.

\begin{Lem}\label{Lem:orbital eval ell}
If  $\alpha\in F_\ell^\times$ and $\ell\not=p$ then
$$
O_\ell(\alpha,\mathbf{E}_1,\mathbf{E}_2)  = \rho_\ell(\alpha\mathfrak{D}_\ell).
$$
\end{Lem}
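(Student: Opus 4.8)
The goal is to evaluate
$$
O_\ell(\alpha,\mathbf{E}_1,\mathbf{E}_2)
= \sum_{t \in \Q_\ell^\times \backslash T(\Q_\ell)/ U_\ell }
\mathbf{1}_{L_\ell(\mathbf{E}_1,\mathbf{E}_2)} (t^{-1} \action j )
$$
for $\ell \neq p$, where $j$ is any fixed element of $V_\ell(\mathbf{E}_1,\mathbf{E}_2)$ with $\cmdeg(j)=\alpha$ (which exists only when $\alpha$ is locally represented; otherwise both sides vanish, so I may assume it exists). The plan is to transport the entire computation into the explicit quadratic model supplied by Lemma \ref{Lem:CM model}. That lemma gives, for $\ell \neq p$, a $K_\ell$-linear isometry
$$
(V_\ell(\mathbf{E}_1,\mathbf{E}_2) , \cmdeg) \iso ( K_\ell, \beta \cdot \mathrm{Nm}_{K_{\ell}/F_{\ell}} )
$$
carrying $L_\ell(\mathbf{E}_1,\mathbf{E}_2)$ to $\co_{K,\ell}$, where $\beta\co_{F,\ell}=\mathfrak{D}^{-1}_\ell$. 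Under this identification the action of $t=(t_1,t_2)\in T(\Q_\ell)$ on $V_\ell$ becomes multiplication by $\eta(t)=\nu(t)^{-1}(t_1\otimes t_2)\in S(\Q_\ell)$, i.e.\ multiplication by an element of $K_\ell^\times$ of norm one down to $F_\ell$. Thus the orbital integral rewrites as a sum over $S(\Q_\ell)/V_\ell$ of the characteristic function of $\co_{K,\ell}$ evaluated at $s^{-1}j$, and by \ref{Prop 2.3.1} and \ref{transfer} the index set $\Q_\ell^\times\backslash T(\Q_\ell)/U_\ell$ maps bijectively to $S(\Q_\ell)/V_\ell$.

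\textbf{Reduction to counting lattices.} With $j$ a chosen element of $K_\ell$ satisfying $\beta\,\mathrm{Nm}_{K_\ell/F_\ell}(j)=\alpha$, the condition $\mathbf{1}_{\co_{K,\ell}}(s^{-1}j)=1$ says precisely that $s^{-1}j\in\co_{K,\ell}$, equivalently $j\in s\co_{K,\ell}$. So I am counting the $S(\Q_\ell)/V_\ell$-cosets whose associated $\co_{K,\ell}$-lattice $s\co_{K,\ell}$ contains $j$. The key structural point is that $S(\Q_\ell)$ is the norm-one subgroup of $K_\ell^\times$, and $s\mapsto s\co_{K,\ell}$ sets up a bijection between $S(\Q_\ell)/V_\ell$ and the set of $\co_{K,\ell}$-fractional ideals $\mathfrak{B}$ with $\mathrm{Nm}_{K_\ell/F_\ell}(\mathfrak{B})=\co_{F,\ell}$. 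The constraint $j\in\mathfrak{B}$ together with $\mathrm{Nm}(\mathfrak{B})=\co_{F,\ell}$ translates, after computing the ideal generated by $j$ via $\mathrm{Nm}_{K_\ell/F_\ell}(j\co_{K,\ell})=\alpha\beta^{-1}\co_{F,\ell}=\alpha\mathfrak{D}_\ell$, into counting integral ideals $\mathfrak{B}\subset\co_{K,\ell}$ with $\mathrm{Nm}_{K_\ell/F_\ell}(\mathfrak{B})=\alpha\mathfrak{D}_\ell$. That count is by definition $\rho_\ell(\alpha\mathfrak{D}_\ell)$, giving the claimed formula.

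\textbf{Main obstacle.} The delicate step is the translation between the double-coset index set and ideals of $\co_{K,\ell}$, and in particular verifying that the compact open $V_\ell=\eta(U_\ell)$ is exactly the stabilizer making $s\mapsto s\co_{K,\ell}$ a clean bijection onto norm-one ideals—this rests on the surjectivity of $\widehat{\co}_K^\times\to\widehat{\co}_F^\times$ (used already in Theorem \ref{Thm:quadratic model}) and the fact that $K_\ell/F_\ell$ is unramified, so that the local norm map on ideals behaves as expected. I would treat the split and inert cases for $\ell$ in $K_1,K_2$ uniformly through the Hermitian model rather than case-by-case, since the model of \ref{Lem:CM model} already absorbs the local behavior into the single datum $\beta\co_{F,\ell}=\mathfrak{D}^{-1}_\ell$. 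The only genuine bookkeeping is confirming that the $V_\ell$-orbit count matches $\rho_\ell$ exactly (no stray factor), which follows because $V_\ell$ acts as the stabilizer of the standard lattice $\co_{K,\ell}$ within the norm-one group; I expect this to be the one place demanding care with the precise definition of $V_\ell$.
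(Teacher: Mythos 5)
Your opening moves coincide with the paper's: the model of Lemma \ref{Lem:CM model} and the transfer of the index set $\Q_\ell^\times\backslash T(\Q_\ell)/U_\ell$ to $S(\Q_\ell)/V_\ell$ via Proposition \ref{Prop 2.3.1}. After that you genuinely diverge. The paper evaluates the coset sum by a four-way case analysis on the splitting of $\ell$ in $K_1$ and $K_2$, exhibiting explicit representatives for $S(\Q_\ell)/V_\ell$ in each case and matching the count against $\rho_\ell(\alpha\mathfrak{D}_\ell)$ case by case. You instead convert the sum into an ideal count once and for all: $s\mapsto s\co_{K,\ell}$ identifies $S(\Q_\ell)/V_\ell$ with the fractional $\co_{K,\ell}$-ideals of norm $\co_{F,\ell}$, and $\mathfrak{B}\mapsto j\mathfrak{B}^{-1}$ bijects those containing $j$ with the integral ideals of norm $\mathrm{Nm}_{K_\ell/F_\ell}(j)\co_{F,\ell}=\alpha\beta^{-1}\co_{F,\ell}=\alpha\mathfrak{D}_\ell$, which is the definition of $\rho_\ell(\alpha\mathfrak{D}_\ell)$; the degenerate cases ($\alpha$ not represented, or $\alpha\mathfrak{D}_\ell$ not integral) correctly make both sides vanish. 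This endgame is cleaner and explains conceptually why the answer is an ideal count, where the paper's computation buys explicit representatives at the cost of repetition.

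There is, however, one real gap, at exactly the step you flag: your justification that $V_\ell$ ``acts as the stabilizer of the standard lattice'' is circular. The stabilizer of $\co_{K,\ell}$ in $S(\Q_\ell)$ is by definition $S(\Q_\ell)\cap\co_{K,\ell}^\times$, and what you must prove is $\eta(U_\ell)=S(\Q_\ell)\cap\co_{K,\ell}^\times$; the inclusion $\subseteq$ is trivial, but the reverse does not follow from the surjectivity of $\widehat{\co}_K^\times\map{}\widehat{\co}_F^\times$ (that surjectivity only yields that $s\mapsto s\co_{K,\ell}$ is \emph{onto} the norm-one ideals). Here is a short argument filling it. Given a norm-one unit $z\in\co_{K,\ell}^\times$, Proposition \ref{Prop 2.3.1} over $\Q_\ell$ gives $t=(t_1,t_2)\in T(\Q_\ell)$ with
$$
z=\eta(t)=\nu(t)^{-1}(t_1\otimes t_2)=\overline{t}_1^{-1}\otimes t_2 ;
$$
since $z$ is a unit, $\overline{t}_1\co_{K,\ell}=t_2\co_{K,\ell}$. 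Applying $\mathrm{Nm}_{K_\ell/K_{1,\ell}}$ and $\mathrm{Nm}_{K_\ell/K_{2,\ell}}$ to this ideal equality, and writing $\nu=\mathrm{Nm}_{K_1/\Q}(t_1)=\mathrm{Nm}_{K_2/\Q}(t_2)$, yields $t_i^2\nu^{-1}\in\co_{K_i,\ell}^\times$ for $i=1,2$. As $\gcd(d_1,d_2)=1$, at least one of $K_{1,\ell}$, $K_{2,\ell}$ is unramified over $\Q_\ell$, which forces $\ord_\ell(\nu)$ to be even; rescaling $t$ by $\ell^{-\ord_\ell(\nu)/2}\in\Q_\ell^\times$ leaves $\eta(t)$ unchanged and makes $\nu$ a unit, whence both $t_i$ are units, so $t\in U_\ell$ and $z\in V_\ell$. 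Note that this step is where the splitting behavior secretly enters even in your uniform treatment — it cannot be waved away by unramifiedness of $K_\ell/F_\ell$ alone — but once it is supplied your proof is complete.
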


\begin{proof}
Fix an isomorphism
$$
(V_\ell(\mathbf{E}_1,\mathbf{E}_2) , \cmdeg) \iso (K_\ell, \beta_\ell \cdot \mathrm{Nm}_{K_{\ell}/F_{\ell}} )
$$
as in Lemma \ref{Lem:CM model}.  Proposition \ref{Prop 2.3.1}  implies that
$(t_1,t_2)\mapsto \nu(t)^{-1} (t_1\otimes t_2)$ defines an isomorphism
\begin{equation}\label{orbital cosets}
\Q_\ell^\times \backslash T(\Q_\ell)/U_\ell \map{} S(\Q_\ell)/V_\ell
\end{equation}
which allows us to rewrite the orbital integral (\ref{orbital})  as
\begin{equation}\label{ell re-orbital}
O_\ell(\alpha,\mathbf{E}_1,\mathbf{E}_2) =
\sum_{s \in  S(\Q_\ell)/ V_\ell } \mathbf{1}_{\co_{K,\ell}} (s^{-1}  \phi )
\end{equation}
where $\phi\in K_\ell$ satisfies $\mathrm{Nm}_{K_\ell/F_{\ell}}(\phi) = \alpha \beta^{-1}_\ell$.
If no such $\phi$ exists then of course the orbital integral vanishes.

Suppose first that $\ell$ is inert in both $K_1$ and $K_2$, so that
$$
\co_{K,\ell}\iso \Z_{\ell^2}\times\Z_{\ell^2} \hspace{1cm} \co_{F,\ell} \iso \Z_\ell\times\Z_\ell.
$$
In this case $\Q_\ell^\times\backslash T(\Q_\ell)/U_\ell = \{1\}$ and  (\ref{ell re-orbital}) shows that
$O_\ell(\alpha,\mathbf{E}_1,\mathbf{E}_2)=1$ if there is a $\phi\in K_\ell$ satisfying
$\mathrm{Nm}_{K_\ell/F_{\ell}}(\phi) = \alpha \beta^{-1}_\ell$.
Otherwise $O_\ell(\alpha,\mathbf{E}_1,\mathbf{E}_2) =0$.
It follows that   $O_\ell(\alpha,\mathbf{E}_1,\mathbf{E}_2) = \rho_\ell(\alpha\mathfrak{D}_\ell)$
as both sides are equal to  $1$ if  $\ord_w(\alpha\beta^{-1}_\ell)$ is even and nonnegative for both
places $w$ of $F$ above $\ell$,  and otherwise both sides are zero.

Suppose next that $\ell$ is inert in $K_1$ and is ramified in $K_2$.  Then  $F_{\ell}/\Q_\ell$ is a
ramified field extension and $K_\ell/F_{\ell}$ is an unramified field extension. Again one has
$\Q_\ell^\times\backslash T(\Q_\ell)/U_\ell = \{1\}$ and  (\ref{ell re-orbital}) shows that
$O_\ell(\alpha,\mathbf{E}_1,\mathbf{E}_2)=1$ if there is a $\phi\in K_\ell$ satisfying
$\mathrm{Nm}_{K_\ell/F_{\ell}}(\phi) = \alpha \beta^{-1}_\ell$.
Otherwise $O_\ell(\alpha,\mathbf{E}_1,\mathbf{E}_2) =0$.    It follows that
$O_\ell(\alpha,\mathbf{E}_1,\mathbf{E}_2) = \rho_\ell(\alpha\mathfrak{D}_\ell)$, as both sides are
equal to $1$ if  $\ord_w(\alpha\beta^{-1}_\ell)$ is even and nonnegative for the unique
place $w$ of $F$ above $\ell$,  and otherwise both sides are zero.  The case of
$\ell$ ramified in $K_1$ and inert in $K_2$ is identical.

Suppose next that $\ell$ is split in $K_1$ and nonsplit in $K_2$.  Fix an isomorphism
$\co_{K_1,\ell}\iso \Z_\ell\times\Z_\ell$ and a uniformizer $\varpi\in \co_{K_{2,\ell}}$.
Let $\sigma$ be the nontrivial Galois automorphism of $K_{2,\ell}$  and define
$$
t_1=(1,\mathrm{Nm}_{K_{2,\ell}/\Q_\ell}(\varpi) ) \in K_{1,\ell}^\times
\hspace{1cm}
t_2=\varpi^\sigma  \in K_{2,\ell}^\times .
$$
Then $\Q_\ell^\times\backslash T(\Q_\ell) /U_\ell$ is the infinite cyclic group generated by
$t=(t_1,t_2)$.  If we  identify
$$
K_\ell \iso K_{1,\ell}\otimes_{\Q_\ell} K_{2,\ell}  \iso K_{2,\ell}  \times K_{2,\ell}
$$
using  $(x_1,x_2)\otimes y\mapsto (x_1y, x_2y^\sigma)$  then
\begin{equation}\label{simple orbital I}
F_{\ell}  \iso   \{ (a,b)\in K_{2,\ell}  \times K_{2,\ell}  : a=b\}
\end{equation}
and
$$
S(\Q_\ell) \iso \{ (a,b) \in  K_{2,\ell}^\times  \times K_{2,\ell}^\times   : ab=1\}.
$$
Using the isomorphism (\ref{orbital cosets}) and the above generator
$t\in \Q_\ell^\times\backslash T(\Q_\ell)/U_\ell$
we find that $S(\Q_\ell)/V_\ell$ is the infinite cyclic group generated by $(\varpi ,\varpi^{-1})$.
It now follows from (\ref{ell re-orbital}) that
$$
O_\ell(\alpha,\mathbf{E}_1,\mathbf{E}_2)
= \sum_{i=-\infty}^\infty \mathbf{1}_{\co_{K_2,\ell}}(\varpi^i \phi_1)\cdot
\mathbf{1}_{ \co_{K_2,\ell} }(\varpi^{-i} \phi_2)
$$
where $(\phi_1,\phi_2)\in  K_{2,\ell}^\times  \times K_{2,\ell}^\times$  is any element that satisfies
$$
(\phi_1\phi_2,\phi_1\phi_2) = \alpha\beta^{-1}_\ell
$$
under the identification (\ref{simple orbital I}).   If we let $w$ be the unique place of $F$ above
$\ell$ then  $O_\ell(\alpha,\mathbf{E}_1,\mathbf{E}_2) = \rho_\ell(\alpha\mathfrak{D}_\ell)$ as both sides
are $1+\ord_w(\alpha\mathfrak{D}_\ell)$ if $\ord_w(\alpha\mathfrak{D}_\ell)\ge 0$, and otherwise both sides are zero.
The case of $\ell$ nonsplit in $K_1$ and split in $K_2$ is identical.

Finally suppose that $\ell$ is split in both $K_1$ and $K_2$ and fix isomorphisms
$$
K_1\iso \Q_\ell\times\Q_\ell  \hspace{1cm}  K_2\iso \Q_\ell\times\Q_\ell .
$$
Define
$$
\rho_{i,j}=(\ell^i,\ell^j) \in \Q_\ell^\times\times\Q_\ell^\times.
$$
The group $\Q_\ell^\times\backslash T(\Q_\ell)/U_\ell$ is then isomorphic to the quotient of
$$
\{ (\rho_{a,b},\rho_{c,d}) \in K_{1,\ell}^\times\times K_{2,\ell}^\times : a+b=c+d \}
$$
by the subgroup $ \{ (\rho_{a,b},\rho_{c,d}) \in K_{1,\ell}^\times\times K_{2,\ell}^\times : a=b=c=d \}.$
If we  identify
\begin{equation}\label{simple orbital II}
\co_{K,\ell}\iso \co_{K_1,\ell} \otimes \co_{K_2,\ell} \iso \Z_\ell\times\Z_\ell\times\Z_\ell\times\Z_\ell
\end{equation}
via $(x_1,x_2)\otimes(y_1,y_2)\mapsto ( x_1y_1, x_2y_2, x_1y_2, x_2y_1 )$  then
$$
\co_{F , \ell} =\{ (z_1,z_2, z_3,z_4) \in  \Z_\ell\times\Z_\ell\times\Z_\ell\times\Z_\ell : z_1=z_2, z_3=z_4 \}
$$
and
$$
S(\Q_\ell)\iso \{ (z_1,z_2, z_3,z_4) \in  \Z_\ell\times\Z_\ell\times\Z_\ell\times\Z_\ell : z_1z_2 = 1,z_3z_4=1 \} .
$$
The isomorphism (\ref{orbital cosets})  takes $(\rho_{a,b},\rho_{c,d})$ to the quadruple
$( \ell^i,\ell^{-i}, \ell^j, \ell^{-j} ) \in S(\Q_\ell)$ where $i=c-b=a-d$ and $j=d-b=a-c$, and a complete
set of coset representatives for $S(\Q_\ell)/V_\ell$ is given by the set
$\{   ( \ell^i,\ell^{-i}, \ell^j, \ell^{-j} )  : i,j\in\Z\}.$  It now follows from (\ref{ell re-orbital}) that
$$
O_\ell(\alpha,\mathbf{E}_1,\mathbf{E}_2) =
\sum_{ -\infty < i,j < \infty} \mathbf{1}_{\Z_\ell}(\ell^i \phi_1) \cdot \mathbf{1}_{ \Z_\ell }(\ell^{-i} \phi_2) \cdot
\mathbf{1}_{\Z_\ell}(\ell^j \phi_3) \cdot \mathbf{1}_{ \Z_\ell }(\ell^{-j} \phi_4)
$$
where $(\phi_1,\phi_2,\phi_3,\phi_4)\in   \Z_\ell\times\Z_\ell\times\Z_\ell\times\Z_\ell\iso \co_{F,\ell} $  satisfies
$$
(\phi_1\phi_2,\phi_1\phi_2,\phi_3\phi_4,\phi_3\phi_4) = \alpha\beta^{-1}_\ell
$$
under (\ref{simple orbital II}).    If we let $w_1, w_2$ be the two places of $F$ above $\ell$ then
$O_\ell(\alpha,\mathbf{E}_1,\mathbf{E}_2) = \rho_\ell(\alpha\mathfrak{D}_\ell)$ as both sides are
  $$
  (1+\ord_{w_1}(\alpha\mathfrak{D}_\ell)) (1+\ord_{w_2}(\alpha\mathfrak{D}_\ell))
  $$
   if $\ord_{w_1}(\alpha\mathfrak{D}_\ell)\ge 0$ and  $\ord_{w_2}(\alpha\mathfrak{D}_\ell)\ge 0$,
   and otherwise both sides are zero.
\end{proof}

\begin{Lem}\label{Lem:orbital eval p}
For any $\alpha\in F_{p}^\times$
$$
O_p(\alpha,\mathbf{E}_1,\mathbf{E}_2) = \rho_p(\alpha\mathfrak{p}^{-1}\mathfrak{D}_p).
$$
\end{Lem}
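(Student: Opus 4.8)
The plan is to follow the proof of Lemma~\ref{Lem:orbital eval ell} almost verbatim, the single change being that the $p$-adic model of the quadratic space is now supplied by Lemma~\ref{Lem:CM model I} in place of Lemma~\ref{Lem:CM model}. I would begin by fixing a $K_p$-linear isomorphism
$$
(V_p(\mathbf{E}_1,\mathbf{E}_2),\cmdeg) \iso (K_p, \beta\cdot\mathrm{Nm}_{K_p/F_p}),
$$
carrying $L_p(\mathbf{E}_1,\mathbf{E}_2)$ onto $\co_{K,p}$, where this time $\beta\in F_p^\times$ satisfies $\beta\co_{F,p}=\mathfrak{p}\mathfrak{D}_p^{-1}$. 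Exactly as in the passage from (\ref{orbital cosets}) to (\ref{ell re-orbital}), Proposition~\ref{Prop 2.3.1} rewrites the orbital integral as
$$
O_p(\alpha,\mathbf{E}_1,\mathbf{E}_2)=\sum_{s\in S(\Q_p)/V_p}\mathbf{1}_{\co_{K,p}}(s^{-1}\phi),
$$
where $\phi\in K_p$ is any element with $\mathrm{Nm}_{K_p/F_p}(\phi)=\alpha\beta^{-1}$, the sum being zero if no such $\phi$ exists.

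The point that makes this easy is that, since $p$ is nonsplit in both $K_1$ and $K_2$, only the cases of Lemma~\ref{Lem:orbital eval ell} in which the coset space is trivial can occur. Following Remark~\ref{Rem:prime description}, I split into two possibilities. If $p$ is inert in both $K_1$ and $K_2$, then $p$ splits in $F$, both places $w\mid p$ are inert in $K$, and $\mathfrak{D}_p=\co_{F,p}$; as in the inert--inert case of Lemma~\ref{Lem:orbital eval ell} one has $\Q_p^\times\backslash T(\Q_p)/U_p=\{1\}$, the integral factors over the two places, and at each place it reduces to a single membership test. If instead $p$ is ramified in one of $K_1,K_2$ and inert in the other (the ramified--ramified case being excluded by $\gcd(d_1,d_2)=1$), then $p$ is ramified in $F$, the unique place above $p$ is inert in $K$, and one is in the inert--ramified case of Lemma~\ref{Lem:orbital eval ell}, where again the coset space is trivial. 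In both cases $S(\Q_p)/V_p=\{1\}$, so $O_p(\alpha,\mathbf{E}_1,\mathbf{E}_2)=\mathbf{1}_{\co_{K,p}}(\phi)$.

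It then remains to identify this with $\rho_p$. Because $K_w/F_w$ is unramified (inert) at every place $w\mid p$, an element $\phi$ of the prescribed norm exists in $\co_{K,p}$ precisely when $\mathrm{ord}_w(\alpha\beta^{-1})$ is even and nonnegative for each such $w$, and in that event it is unique up to the norm-one units absorbed into $V_p$; this is exactly the count of ideals $\mathfrak{B}\subset\co_{K,p}$ with $\mathrm{Nm}_{K/F}(\mathfrak{B})=\alpha\beta^{-1}\co_{F,p}$. Since $\beta$ now generates $\mathfrak{p}\mathfrak{D}_p^{-1}$ rather than $\mathfrak{D}_p^{-1}$, the relevant ideal is $\alpha\beta^{-1}\co_{F,p}=\alpha\mathfrak{p}^{-1}\mathfrak{D}_p$, and therefore $O_p(\alpha,\mathbf{E}_1,\mathbf{E}_2)=\rho_p(\alpha\mathfrak{p}^{-1}\mathfrak{D}_p)$. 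I anticipate no real obstacle: the whole argument is the $\ell=p$ specialization of Lemma~\ref{Lem:orbital eval ell}, and the only care needed is to track the extra factor of $\mathfrak{p}$ coming from the shifted normalization of $\beta$ at the reflex prime through the final valuation-parity count.
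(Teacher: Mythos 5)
Your proposal is correct and matches the paper's own argument: both reduce the orbital integral, via the triviality of $\Q_p^\times\backslash T(\Q_p)/U_p$ (equivalently $S(\Q_p)/V_p$), to a single membership test in $\co_{K,p}$ using the model of Lemma \ref{Lem:CM model I}, and then compare valuations at the inert places above $p$, with the shift $\beta\co_{F,p}=\mathfrak{p}\mathfrak{D}_p^{-1}$ producing the factor $\mathfrak{p}^{-1}$. The only cosmetic difference is that the paper deduces the triviality of the coset space in one line from the fact that $p$ is unramified in at least one of $K_1$, $K_2$, whereas you obtain it from the two-case analysis of Remark \ref{Rem:prime description}; the conclusion is identical.
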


\begin{proof}
As $p$ is unramified in at least one of $K_1$ or $K_2$, it is easy to see that
$$
\Q_p^\times\backslash T(\Q_p)/U_p =\{1\}.
$$
Thus  the orbital integral $O_p(\alpha,\mathbf{E}_1,\mathbf{E}_2)$ is $1$ if there is a
$j\in L_p(\mathbf{E}_1,\mathbf{E}_2)$ satisfying $\cmdeg(j)=\alpha$ and is $0$ otherwise.
Using the model of Lemma \ref{Lem:CM model I} we see that $O_p(\alpha,\mathbf{E}_1,\mathbf{E}_2)=1$
if and only if there is a $j\in \co_{K,p}$ satisfying
$$
\mathrm{Nm}_{K_p/F_{p}} (j) =\alpha\beta_p^{-1}.
$$
As each prime of $F$ above $p$ is inert in $K$,
such a $j$ exists if and only if  $\ord_w(\alpha \beta_p^{-1})$ is even and nonnegative for each
place $w$ of $F$ above $p$.  Using
$$\ord_w(\alpha\beta_p^{-1}) = \ord_w(\alpha\mathfrak{p}^{-1}\mathfrak{D}_p)$$
we find that both sides of the desired equality are $1$ if  $\ord_w(\alpha\mathfrak{p}^{-1}\mathfrak{D}_p)$
is even and nonnegative for both places $w$ of $F$ above $p$, and otherwise both sides of the
equality are $0$.
\end{proof}

Recall from the introduction that for any fractional $\co_F$-ideal $\mathfrak{b}$, $\rho(\mathfrak{b})$
is defined to be the number of ideals $\mathfrak{B}\subset\co_K$ for which
$\mathrm{Nm}_{K/F}(\mathfrak{B}) =\mathfrak{b}$.

\begin{Thm}\label{Thm:global orbital}
For any $\alpha\in F^\times$ we have
$$
\prod_{\ell} O_\ell(\alpha,\mathbf{E}_1,\mathbf{E}_2) = \rho(\alpha\mathfrak{D}\mathfrak{p}^{-1}).
$$
\end{Thm}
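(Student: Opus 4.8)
The plan is to reduce the theorem entirely to the two local computations already carried out, namely Lemma \ref{Lem:orbital eval ell} and Lemma \ref{Lem:orbital eval p}, glued together by the multiplicativity (\ref{product ideals}) of $\rho$. The key observation is purely book-keeping: the ideal on the right-hand side, $\alpha\mathfrak{D}\mathfrak{p}^{-1}$, has a local component at each prime $\ell$ that matches exactly the argument appearing in the corresponding orbital integral lemma. So the whole proof is a verification that the local factors line up.

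First I would apply the product formula (\ref{product ideals}) to the fractional $\co_F$-ideal $\mathfrak{b}=\alpha\mathfrak{D}\mathfrak{p}^{-1}$, giving
$$
\rho(\alpha\mathfrak{D}\mathfrak{p}^{-1}) = \prod_\ell \rho_\ell(\alpha\mathfrak{D}\mathfrak{p}^{-1}).
$$
Next I would analyze each local factor. Since $\mathfrak{p}$ is a prime of $F$ lying above $p$, for every $\ell\neq p$ the localization $\mathfrak{p}_\ell$ equals $\co_{F,\ell}$, so $(\alpha\mathfrak{D}\mathfrak{p}^{-1})_\ell = \alpha\mathfrak{D}_\ell$ and hence $\rho_\ell(\alpha\mathfrak{D}\mathfrak{p}^{-1}) = \rho_\ell(\alpha\mathfrak{D}_\ell)$. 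By Lemma \ref{Lem:orbital eval ell} this is precisely $O_\ell(\alpha,\mathbf{E}_1,\mathbf{E}_2)$. At the remaining prime $\ell=p$ the factor $\mathfrak{p}^{-1}$ is nontrivial, so $(\alpha\mathfrak{D}\mathfrak{p}^{-1})_p = \alpha\mathfrak{p}^{-1}\mathfrak{D}_p$, whence $\rho_p(\alpha\mathfrak{D}\mathfrak{p}^{-1}) = \rho_p(\alpha\mathfrak{p}^{-1}\mathfrak{D}_p)$, which by Lemma \ref{Lem:orbital eval p} equals $O_p(\alpha,\mathbf{E}_1,\mathbf{E}_2)$.

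Combining these, every local factor $\rho_\ell(\alpha\mathfrak{D}\mathfrak{p}^{-1})$ coincides with $O_\ell(\alpha,\mathbf{E}_1,\mathbf{E}_2)$, and multiplying over all $\ell$ yields
$$
\prod_\ell O_\ell(\alpha,\mathbf{E}_1,\mathbf{E}_2) = \prod_\ell \rho_\ell(\alpha\mathfrak{D}\mathfrak{p}^{-1}) = \rho(\alpha\mathfrak{D}\mathfrak{p}^{-1}),
$$
which is the claimed identity. There is no genuine obstacle here: all the substantive work is contained in the two orbital-integral lemmas, and the only point demanding care is keeping track of the localization of the twisting factor $\mathfrak{p}^{-1}$, which is a unit away from $p$ and contributes the single shift $\mathfrak{D}_p \mapsto \mathfrak{p}^{-1}\mathfrak{D}_p$ at $p$. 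One should also note that the identity holds trivially in the degenerate case where no $j$ representing $\alpha$ exists locally, since then the relevant orbital integral and the corresponding $\rho_\ell$ both vanish, so the two sides agree factor by factor regardless.
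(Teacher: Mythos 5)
Your proof is correct and takes exactly the paper's approach: the paper's own proof of Theorem \ref{Thm:global orbital} consists of precisely the citation of Lemmas \ref{Lem:orbital eval ell} and \ref{Lem:orbital eval p} together with the product formula (\ref{product ideals}). Your version simply spells out the localization book-keeping (that $\mathfrak{p}^{-1}$ is trivial at every $\ell\neq p$ and shifts $\mathfrak{D}_p$ to $\mathfrak{p}^{-1}\mathfrak{D}_p$ at $p$, and that both sides vanish in the degenerate case) which the paper leaves implicit.
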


\begin{proof}
This is clear from Lemmas \ref{Lem:orbital eval ell} and \ref{Lem:orbital eval p}, and the
product formula (\ref{product ideals}).
\end{proof}


\subsection{Deformation theory}


Let $p$ be a prime that is nonsplit in both $K_1$ and $K_2$.  This implies that all CM pairs over
$\F_p^\alg$ are supersingular. Let $W=W(\F_p^\alg)$ be the ring of Witt vectors of $\F_p^\alg$, and let $
\Z_{p^2}\subset W$ be the  ring of Witt vectors of $\F_{p^2}$.
  Denote by  $\mathcal{CLN}$ be the category of complete local Noetherian $W$-algebras with
  residue field $\F_p^\alg$.    Let $\mathfrak{g}$ be the unique (up to isomorphism) connected
  $p$-divisible group of height $2$ and dimension $1$ over $\F_p^\alg$, and set $\Delta=\End(\mathfrak{g})$.
    Thus $\mathfrak{g}$ is isomorphic  to the $p$-divisible group of any supersingular elliptic curve over
    $\F_p^\alg$, and $\Delta$ is the maximal order in a quaternion division algebra
  over $\Q_p$.    Let $x\mapsto x^\iota$ denote the main involution on $\Delta$, let
$\mathfrak{m}_\Delta\subset\Delta$ be the maximal ideal, and let $\ord_\Delta$ be the valuation on
$\Delta$ defined by
$$
\ord_\Delta(j)=k \iff j\in \mathfrak{m}_\Delta^k\smallsetminus \mathfrak{m}_\Delta^{k+1}.
$$

  For any $\Z_p$-subalgebra $\co\subset\Delta$ denote by  $\mathrm{Def}(\mathfrak{g},\co)$
  the functor  from   $\mathcal{CLN}$  to the category of sets that
  assigns to an object $R$ of $\mathcal{CLN}$ the set of isomorphism  classes of deformations of
  $\mathfrak{g}$, with its $\co$-action, to $R$.    Suppose that $\co=\co_L$ is the maximal order in
  a quadratic field extension of $\Q_p$ and let $\pi_L\in \co_L$ be a uniformizer.
  Let $\mathcal{W}_L$ be the completion of the  ring of integers  of the
  maximal unramified extension of $L$, and choose a continuous ring homomorphism $W\map{}\mathcal{W}_L$.
  By Lubin-Tate theory (see for example \cite[Theorem 3.8]{ARGOS-7}) the deformation functor
  $\mathrm{Def}(\mathfrak{g},\co_L)$   is represented by  $\mathcal{W}_L$, so that
  $$
  \mathrm{Def}(\mathfrak{g},\co_L)(R) \iso \Hom_\mathcal{CLN}(\mathcal{W}_L,R).
  $$
  Let $\mathfrak{G}$ be the
  universal deformation of $\mathfrak{g}$, with its $\co_L$-action, to
  $\mathcal{W}_L$, and let $\mathfrak{G}_k$ be the reduction of
  $\mathfrak{G}$ to    $\mathcal{W}_L / \pi_L^k \mathcal{W}_L$.    It is a result of Gross
(see \cite[Proposition 3.3]{gross86} or  \cite[Theorem 1.4]{ARGOS-8})   that the reduction map
 $\End(\mathfrak{G}_k)\map{}\End(\mathfrak{g})$ identifies
\begin{equation}\label{gross deform}
\End(\mathfrak{G}_k) \iso \co_L+ \pi_L^{k-1}\Delta.
\end{equation}
Given any $j\in\Delta\smallsetminus \co_L$, it follows from \cite[Proposition 2.9]{rapoport96}
 that  the functor  $\mathrm{Def}(\mathfrak{g},\co_L[j])$ is represented by
 $\mathcal{W}_L/\pi_L^k\mathcal{W}_L$
 where $k$ is the largest integer such that $j$ lifts (necessarily uniquely)
 to an endomorphism of $\mathfrak{G}_k$.

  Given a CM pair $(\mathbf{E}_1,\mathbf{E}_2)$ over $\F_p^\alg$  let
$\mathrm{Def}(\mathbf{E}_1,\mathbf{E}_2)$ be the functor that assigns to every object $R$
of $\mathcal{CLN}$ the set of isomorphism classes of deformations of the pair
$(\mathbf{E}_1,\mathbf{E}_2)$ to $R$.  Fix isomorphisms
of $p$-divisible groups
$$
E_1[p^\infty] \iso \mathfrak{g} \iso E_2[p^\infty].
$$
Let $\co_{L_i} \subset \Delta$
be the image of $\kappa_i: \co_{K_i,p} \map{}\Delta$ and let $L_i\iso K_{i,p}$ be the fraction field of
$\co_{L_i}$.   It follows from the Serre-Tate theorem and the comments of the preceding paragraph that the
functor
$$
\mathrm{Def}(\mathbf{E}_1,\mathbf{E}_2) \iso \mathrm{Def}(\mathfrak{g},\co_{L_1}) \times
\mathrm{Def}(\mathfrak{g},\co_{L_2}),
$$
is represented by $\mathcal{W}_{L_1}\widehat{\otimes}_W\mathcal{W}_{L_2}$.

\begin{Prop}\label{Prop:naked count}
Let  $\mathfrak{p}$ be a  prime of $F$ above $p$.   There are $2\cdot |\Gamma|$ isomorphism
classes of CM pairs over $\F_p^\alg$ having reflex prime $\mathfrak{p}$.
\end{Prop}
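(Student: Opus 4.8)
The plan is to reduce the count to a count of $\Gamma$-orbits, and then to classify the orbits by their pair of Lie-algebra characters. By Lemma \ref{C stab} the group $\Gamma$ acts \emph{freely} on $[\mathcal{X}(\F_p^\alg)]$, so every orbit has exactly $|\Gamma|$ elements. I would first check that the reflex prime is constant on $\Gamma$-orbits: Serre's tensor construction gives $\mathrm{Lie}(E_i\otimes\mathfrak{a}_i)\iso \mathrm{Lie}(E_i)\otimes_{\co_{K_i}}\mathfrak{a}_i$, and tensoring the one-dimensional representation $\kappa_i^{\mathrm{Lie}}$ by an invertible $\co_{K_i}$-module does not change the character $\kappa_i^{\mathrm{Lie}}$; hence $(\mathbf{E}_1,\mathbf{E}_2)$ and $(\mathbf{E}_1,\mathbf{E}_2)\otimes(\mathfrak{a}_1,\mathfrak{a}_2)$ induce the same homomorphism (\ref{reflex map}) and have the same reflex prime. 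Consequently the number of CM pairs with a fixed reflex prime $\mathfrak{p}$ equals $|\Gamma|$ times the number of orbits with reflex prime $\mathfrak{p}$, and it suffices to produce exactly two such orbits.

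Next I would show that the assignment sending a supersingular CM pair to its pair of Lie characters $(\kappa_1^{\mathrm{Lie}},\kappa_2^{\mathrm{Lie}})$ descends to a \emph{bijection} between $\Gamma$-orbits and admissible character pairs. Surjectivity is the existence part of the theory of complex multiplication in characteristic $p$: for $p$ nonsplit in $K_i$ there is a supersingular elliptic curve over $\F_p^\alg$ realizing each embedding $\co_{K_i}/\mathfrak{P}_i\hookrightarrow\F_p^\alg$ as its Lie character, where $\mathfrak{P}_i$ is the prime of $K_i$ above $p$. Injectivity amounts to the transitivity statement that $\mathrm{Pic}(\co_{K_i})$ acts transitively, via Serre tensor by ideals, on the isomorphism classes of supersingular $(E_i,\kappa_i)$ with a \emph{fixed} Lie character: any two such curves are supersingular, hence isogenous, and a suitable prime-to-$p$ $\co_{K_i}$-linear isogeny (whose kernel defines a proper $\co_{K_i}$-ideal) exhibits one as $\otimes\,\mathfrak{a}_i$ of the other, the fixed-character constraint pinning down the factor. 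I expect this transitivity to be the main obstacle, since it is the substantive input from characteristic-$p$ CM theory; everything else is bookkeeping.

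It then remains to count admissible character pairs inducing a given reflex prime. The number of embeddings $\co_{K_i}/\mathfrak{P}_i\hookrightarrow\F_p^\alg$ is $2$ when $p$ is inert in $K_i$ (residue field $\F_{p^2}$) and $1$ when $p$ is ramified (residue field $\F_p$). Since $\gcd(d_1,d_2)=1$, at most one of $K_1,K_2$ is ramified at $p$, so there are either $4$ character pairs (both inert) or $2$ (one ramified). The reflex prime attached to a pair $(\sigma_1,\sigma_2)$ is determined by the kernel of (\ref{reflex map}) reduced modulo $p$, i.e.\ by the composite $t_1\otimes t_2\mapsto \sigma_1(t_1)\sigma_2(t_2)$ on $\co_K/p\co_K$. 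In the inert-inert case this is a map $\F_{p^2}\otimes_{\F_p}\F_{p^2}\iso \F_{p^2}\times\F_{p^2}$, and two pairs share a kernel precisely when they differ by simultaneous postcomposition with the Frobenius of $\F_p^\alg$ (as $x\mapsto x^p$ is injective, while $F^2$ fixes $\F_{p^2}$, the pairs $(\sigma_1 F,\sigma_2)$ and $(\sigma_1,\sigma_2 F)$ also share a kernel). Thus the four pairs split $2$-$2$ between the two primes of $K$ above $p$, matching the two reflex primes of Remark \ref{Rem:prime description}.

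Assembling these, in the inert-inert case there are $4$ character pairs and $2$ reflex primes, distributed evenly, so each reflex prime comes from exactly $2$ pairs; in the case where $p$ is ramified in one field there are $2$ character pairs and, again by Remark \ref{Rem:prime description}, a single reflex prime, so that prime comes from both pairs. In every case exactly two admissible character pairs induce a given reflex prime $\mathfrak{p}$, hence exactly two $\Gamma$-orbits, and therefore $2\cdot|\Gamma|$ isomorphism classes of CM pairs over $\F_p^\alg$ with reflex prime $\mathfrak{p}$, as claimed.
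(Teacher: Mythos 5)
Your proposal is correct, but it proves the proposition by a genuinely different route than the paper. The paper never classifies objects in characteristic $p$ directly: it computes the deformation functor $\mathrm{Def}(\mathbf{E}_1,\mathbf{E}_2)$ via Lubin--Tate theory, deduces that the reduction map $[\mathcal{X}(\C_p)]\map{}[\mathcal{X}(\F_p^\alg)]$ is a bijection when $p$ is inert in both fields and two-to-one when $p$ is ramified in one of them, imports the count of $4\cdot|\Gamma|$ classes over $\C$ from Remark \ref{four orbits}, and, in the inert case, splits the classes evenly between the two reflex primes using the involution $\kappa_2\mapsto\kappa_2\circ(x\mapsto\overline{x})$. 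You instead stay entirely in characteristic $p$: the free action of $\Gamma$ (Lemma \ref{C stab}), invariance of the Lie characters under Serre tensor, a Deuring-style bijection between $\Gamma$-orbits and admissible pairs of Lie characters, and an explicit residue-field computation of which character pairs give which kernel of (\ref{reflex map}) --- your two-two splitting of the four pairs in the inert-inert case, governed by simultaneous versus one-sided composition with Frobenius, is exactly right and replaces the paper's conjugation trick. What your route buys is independence from the lifting theory and from the complex uniformization count; what the paper's route buys is that the transitivity of $\mathrm{Pic}(\co_{K_i})$ on fixed-character supersingular CM curves comes for free from the bijection with the transitive picture over $\C$, whereas for you it is the one substantive statement left unproved --- as you yourself flag.

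That transitivity statement is true and your sketch is completable, but the point where the fixed Lie character ``pins down the factor'' deserves to be made precise, since it is exactly where the argument could fail. Nonvanishing of $M=\Hom_{\co_{K_i}}(E_i,E_i')$ follows by decomposing $\Hom(E_i,E_i')\otimes_\Z\Q$ as a module over $K_i\otimes_\Q K_i\iso K_i\times K_i$: both the linear and conjugate-linear eigenspaces are one-dimensional over $K_i$ inside the quaternion algebra. The existence of an element of $M$ of degree prime to $p$ is where equality of Lie characters enters: identifying $\Hom(E_i,E_i')\otimes_\Z\Z_p\iso\Delta$, two embeddings of $\co_{K_i,p}$ into $\Delta$ are conjugate by a \emph{unit} of $\Delta$ if and only if their reductions modulo $\mathfrak{m}_\Delta$ (that is, their Lie characters) agree, because conjugation by the uniformizer $\Pi$ twists the reduction by Frobenius; when the characters agree one may normalize the two embeddings to be equal, and then the linear homomorphisms at $p$ form the order $\co_L$, which contains units. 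Granting this, the kernel of a prime-to-$p$ linear isogeny is of the form $E_i[\mathfrak{a}]$, since $E_i[n]$ is free of rank one over $\co_{K_i}/n$ for $p\nmid n$ and its $\co_{K_i}$-submodules are cut out by ideals, and the identification $E_i/E_i[\mathfrak{a}]\iso E_i\otimes\mathfrak{a}^{-1}$ is standard from \cite[Section 7]{conrad04}. With that lemma supplied, your bookkeeping (four character pairs splitting two-two between the two reflex primes in the inert-inert case; two pairs both inducing the unique reflex prime in the ramified case) yields exactly the paper's count of $2\cdot|\Gamma|$.
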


\begin{proof}
Let $(\mathbf{E}_1,\mathbf{E}_2)$ be a  CM pair over $\F_p^\alg$.
Suppose first that $p$ is unramified in both $K_1$ and $K_2$, so that
$\mathrm{Def}(\mathbf{E}_1,\mathbf{E}_2)$ is represented by $W\widehat{\otimes}_W W\iso W$.
If $R$ is any object of $\mathcal{CLN}$ then
$$
\mathrm{Def}(\mathbf{E}_1,\mathbf{E}_2) (R) \iso \Hom_{\mathcal{CLN}}(W,R)
$$
consists of a single point.  From this it follows easily that the pair $(\mathbf{E}_1,\mathbf{E}_2)$
admits a unique lift to characteristic $0$, and that the reduction map
$$
[\mathcal{X}(\C_p)] \map{} [ \mathcal{X}(\F_p^\alg) ]
$$
is a bijection.  As explained in Remark \ref{four orbits} there are $4\cdot |\Gamma|$ isomorphism
classes of CM pairs over $\C_p$, and hence also $4\cdot |\Gamma|$ isomorphism classes of CM pairs
over $\F_p^\alg$.  If $(\mathbf{E}_1,\mathbf{E}_2)$ has reflex prime $\mathfrak{p}$ then we may consider the pair
$(\mathbf{E}_1,\mathbf{E}_2')$ obtained by replacing $\mathbf{E}_2=(E_2,\kappa_2)$ by the pair
$\mathbf{E}'_2=(E_2, \kappa'_2)$, where $\kappa'_2(x) = \kappa_2(\overline{x})$.  It is easy to check that
the CM pair $(\mathbf{E}_1,\mathbf{E}_2')$ has reflex prime $\mathfrak{p}'\not=\mathfrak{p}$, and hence
 half of the  $4\cdot |\Gamma|$ isomorphism classes of supersingular CM pairs have
reflex prime $\mathfrak{p}$, and half have reflex prime $\mathfrak{p}'$.

Now suppose that $p$ is ramified in one of $K_1$ or $K_2$, and for simplicity suppose it is $K_1$.
Then $\mathrm{Def}(\mathbf{E}_1,\mathbf{E}_2)$ represented by
$\mathcal{W}_{L_1}\widehat{\otimes}_W \mathcal{W}_{L_2} \iso \mathcal{W}_{L_1}$, and so for any object
$R$ of $\mathcal{CLN}$
$$
\mathrm{Def}(\mathbf{E}_1,\mathbf{E}_2) (R) \iso \Hom_{\mathcal{CLN}}(\mathcal{W}_{L_1},R).
$$
Assuming that $R$ is large enough to contain a subring isomorphic to $\mathcal{W}_{L_1}$,
the set on the right  consists of two points
(which are interchanged by pre-composing with the nontrivial element of
$\Aut(\mathcal{W}_{L_1}/W)$).   It follows from this that $(\mathbf{E}_1,\mathbf{E}_2)$ admits
precisely two nonisomorphic deformations to characteristic $0$, and that the reduction map
$$
[\mathcal{X}(\C_p)] \map{} [ \mathcal{X}(\F_p^\alg) ]
$$
is two-to-one.  As in the preceding paragraph, there are $4\cdot |\Gamma|$ isomorphism classes of
CM pairs over $\C_p$, and therefore there are half as many over $\F_p^\alg$.  As $p$ is ramified in
$F$ ($p\co_F=\mathfrak{p}^2$), we find that there are $2\cdot |\Gamma|$ isomorphism classes of CM
pairs over $\F_p^\alg$,  all having  reflex prime $\mathfrak{p}$.
\end{proof}

Fix a CM pair $(\mathbf{E}_1,\mathbf{E}_2)$ over $\F_p^\alg$ and let
 $\mathfrak{p}$ be the reflex prime of $(\mathbf{E}_1,\mathbf{E}_2)$.  Given any nonzero
 $j\in L(\mathbf{E}_1,\mathbf{E}_2)$ let  $\mathrm{Def}(\mathbf{E}_1,\mathbf{E}_2,j)$
be the functor that assigns to every object $R$ of $\mathcal{CLN}$ the set of isomorphism classes of deformations
of $ (\mathbf{E}_1,\mathbf{E}_2, j)$ to $R$.
  As above choose  an  isomorphism of $p$-divisible groups $E_i[p^\infty] \iso \mathfrak{g}$.  Such
  choices determine an isomorphism
  $$
  L_p(\mathbf{E}_1,\mathbf{E}_2) \iso \Delta
  $$
  of $\Z_p$-modules, and allow us to view $\deg$ as a $\Z_p$-quadratic form  on $\Delta$.  The quadratic form
  $\deg$ on $\Delta$   is  a $\Z_p^\times$-multiple of the   reduced norm.

  \begin{Lem}\label{Lem:local ring unr}
If  $p$ is inert in both $K_1$ and $K_2$ then the deformation functor $\mathrm{Def}(\mathbf{E}_1,\mathbf{E}_2,j)$
is represented by a local Artinian $W$-algebra of length
\begin{equation}\label{unr length}
  \frac{\ord_\mathfrak{p}(\cmdeg(j)) +1   }{2}.
\end{equation}
  \end{Lem}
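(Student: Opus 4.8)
The plan is to reduce the statement to a deformation computation for $p$-divisible groups and then read off the length from Gross's description of the endomorphism rings of the reductions of the canonical lift. By the Serre--Tate theorem the functor $\mathrm{Def}(\mathbf{E}_1,\mathbf{E}_2,j)$ is identified with the functor of deformations of the $p$-divisible groups $E_1[p^\infty]\iso\mathfrak{g}\iso E_2[p^\infty]$, together with their $\co_{K_i,p}$-actions and the homomorphism $j$. Since $p$ is inert in both $K_1$ and $K_2$, both $L_i\iso K_{i,p}$ are unramified over $\Q_p$, so $\mathcal{W}_{L_1}=\mathcal{W}_{L_2}=W$ and, as recorded just before Proposition \ref{Prop:naked count}, the CM pair deforms uniquely over $W$ (and over every Artinian quotient of $W$). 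Hence $\mathrm{Def}(\mathbf{E}_1,\mathbf{E}_2,j)$ is the closed subfunctor of $\mathrm{Def}(\mathbf{E}_1,\mathbf{E}_2)\iso\mathrm{Spf}(W)$ cut out by the condition that $j$ lift. As $W$ is a discrete valuation ring with uniformizer $p$, this subfunctor is represented by a quotient $W/p^kW$, where $k$ is the largest level to which $j$ lifts; the length in the statement is exactly this integer $k$, and the computation below shows it is finite and equal to the claimed value.

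To compute $k$ I would, exactly as in the proof of Lemma \ref{Lem:CM model I}, use a prime-to-$p$ isogeny to arrange $E_1=E_2=E$, so that $\Delta=\End(\mathfrak{g})$ contains both $\co_{L_1}$ and $\co_{L_2}$ and $j\in\Delta$. Gross's theorem (\ref{gross deform}) gives $\End(\mathfrak{G}_{i,k})=\co_{L_i}+p^{k-1}\Delta$ for the canonical lift carrying the $\co_{L_i}$-action; the analogous question for the module $\Hom(\mathfrak{G}_{1,k},\mathfrak{G}_{2,k})$ is governed, via Grothendieck--Messing and the explicit Hodge filtrations of the two canonical lifts, by the requirement that $j$ carry the filtration attached to $\co_{L_1}$ into the filtration attached to $\co_{L_2}$ modulo $p^k$. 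The crucial point is that these filtrations are dictated by the Lie algebra actions $\kappa_i^{\mathrm{Lie}}$, which is precisely the data defining the reflex prime $\mathfrak{p}$. Carrying out the linear algebra should show that the obstruction to lifting $j$ is sensitive only to the component of $j$ at $\mathfrak{p}$ in the decomposition $F_p=F_\mathfrak{p}\times F_{\mathfrak{p}'}$, and that $j$ lifts to level $k$ if and only if $\ord_\mathfrak{p}(\cmdeg(j))\ge 2k-1$.

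Finally I would translate this inequality into the closed form of the lemma. Because $p$ is inert in both $K_i$ we have $p\nmid D$, hence $\mathfrak{D}_p=\co_{F,p}$ and $F_p\iso\Q_p\times\Q_p$ with both primes inert in $K$. The model of Lemma \ref{Lem:CM model I}, with $\beta\co_{F,p}=\mathfrak{p}\mathfrak{D}_p^{-1}=\mathfrak{p}\co_{F,p}$, then gives $\ord_\mathfrak{p}(\cmdeg(j))=1+\ord_p\mathrm{Nm}_{K_\mathfrak{p}/\Q_p}(j_\mathfrak{p})$, an odd integer (note it is the factor $\mathfrak{p}$ in Lemma \ref{Lem:CM model I}, absent in Lemma \ref{Lem:CM model}, that produces this odd shift and ties the answer to the reflex prime). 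Consequently the largest $k$ with $\ord_\mathfrak{p}(\cmdeg(j))\ge 2k-1$ is exactly $(\ord_\mathfrak{p}(\cmdeg(j))+1)/2$, as asserted.

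I expect the main obstacle to be the middle step: making precise, through the crystalline description of the two canonical lifts, that the lifting of the homomorphism $j$ between two \emph{distinct} canonical lifts is obstructed solely by its reflex-prime component, and pinning down the exact cutoff $2k-1$ rather than merely an order of magnitude. Everything else---the Serre--Tate reduction, the identification of the representing ring as $W/p^kW$, and the final arithmetic of $\ord_\mathfrak{p}$---is routine once this deformation-theoretic heart is in place.
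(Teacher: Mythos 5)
Your outer skeleton coincides with the paper's: Serre--Tate, the identification $\mathrm{Def}(\mathbf{E}_1,\mathbf{E}_2)\iso \mathrm{Spf}(W)$ in the inert case, representability of the triple's deformation functor by $W/p^kW$ with $k$ the largest level to which $j$ lifts, and the final arithmetic converting the cutoff $\ord_\mathfrak{p}(\cmdeg(j))\ge 2k-1$ into the stated length $(\ord_\mathfrak{p}(\cmdeg(j))+1)/2$. But there is a genuine gap at exactly the point you flag as the ``deformation-theoretic heart.'' You set up the lifting problem for a homomorphism between two \emph{distinct} canonical lifts $\mathfrak{G}^{(1)},\mathfrak{G}^{(2)}$ attached to the two subrings $\co_{L_1},\co_{L_2}\subset\Delta$, and then defer the decisive step --- that the obstruction sees only the reflex-prime component of $j$, with exact cutoff $2k-1$ rather than some other bound --- to unspecified ``linear algebra'' via Grothendieck--Messing. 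That cutoff \emph{is} the content of the lemma, and nothing in your sketch pins it down. It is not a formality: in the ramified case (Lemma \ref{Lem:local ring ram}), where the paper really does face a Hom-lifting problem between two distinct lifts, it needs the nontrivial input of \cite[Proposition 5.2]{ARGOS-8} to resolve it.

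The paper avoids the two-lift problem in the inert case by one move your proposal is missing: since $p$ is inert in both $K_1$ and $K_2$, both $\co_{L_i}$ are isomorphic to $\Z_{p^2}$, and the isomorphisms $E_i[p^\infty]\iso\mathfrak{g}$ may be chosen so that $\kappa_1$ and $\kappa_2$ have the \emph{same} image $\co_L\subset\Delta$. Then the universal deformation of the pair has $p$-divisible group $(\mathfrak{G},\mathfrak{G})$ for a single canonical lift $\mathfrak{G}$, a lift of $j$ is an \emph{endomorphism}, and Gross's theorem (\ref{gross deform}) applies verbatim: $j$ lifts modulo $p^k$ if and only if $j\in\co_L+p^{k-1}\Delta$. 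Fixing $\Pi$ with $u\Pi=\Pi u^\iota$ and writing $\Delta=\co_L\oplus\co_L\Pi$, an orthogonal decomposition for $\deg$, the paper computes $\Psi(\cmdeg(j))=(\deg(j_+),\deg(j_-))$ and identifies the minus-component with the reflex prime $\mathfrak{p}$ via the Lie actions, whence $j\in\co_L+p^{k-1}\Delta$ iff $\ord_p(\deg(j_-))\ge 2k-1$ iff $\ord_\mathfrak{p}(\cmdeg(j))\ge 2k-1$. Your route via crystalline Hodge filtrations could plausibly be pushed through (it is morally how the reflex prime enters), but as written it asserts the key equivalence instead of proving it, so the proposal is a plan rather than a proof; your closing steps, while correct, are conditional on that unproved middle step.
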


  \begin{proof}
    The isomorphisms $E_i[p^\infty] \iso \mathfrak{g}$
  may be chosen so that $\kappa_1:\co_{K_1,p} \map{}\Delta$ and $\kappa_2:\co_{K_2,p} \map{}\Delta$ have the
  same image $\co_L\iso \Z_{p^2}$, and we may fix a uniformizing parameter $\Pi\in \Delta$ with the property
  $u \Pi = \Pi u^\iota$ for every $u\in \co_L$.  There is then a decomposition of left $\co_L$-modules
  $$
  \Delta = \Delta_+ \oplus \Delta_-
  $$
  where $\Delta_+=\co_L$ and $\Delta_-=\co_L\Pi$, and this decomposition
 is orthogonal with respect to the quadratic form $\deg$ on $\Delta$.    Now define
$f_\pm:\co_{K,p}\map{} \co_L$  by
\begin{align*}
f_+(x_1\otimes x_2) &=  \kappa_2(x_2) \kappa_1(\overline{x}_1)\\
f_-(x_1\otimes x_2) &=    \kappa_2(x_2) \kappa_1(x_1)
\end{align*}
and denote by $\Psi$ the isomorphism
$$
\Psi=f_+\times f_- :\co_{K,p}  \map{}\co_L\times\co_L.
$$
Let $\mathfrak{p}_-=\mathfrak{p}$ be the reflex prime of the pair $(\mathbf{E}_1,\mathbf{E}_2)$, and let
$\mathfrak{p}_+$ be the other prime of $F$ above $p$.  The map $f_\pm$ factors through the completion
of $\co_{K,p}$ at the prime $\mathfrak{p}_\pm$.
The action (\ref{K action})  of $\co_K$ on $\Delta$ is by (for $j=j_++j_-$)
$$
x \action j =  f_+(x) j_+ +  f_-(x) j_-
$$
and the $\co_{F,p}$-quadratic form $\cmdeg$ on $\Delta$ takes the explicit form
$$
\Psi(\cmdeg(j)) = ( \deg(j_+), \deg(j_-)).
$$
It follows that
\begin{align*}
\ord_{\mathfrak{p}_+} (\cmdeg(j)) &= \ord_p(\deg(j_+)) \\
\ord_{\mathfrak{p}_-} (\cmdeg(j)) &= \ord_p(\deg(j_-)) .
\end{align*}
In particular for any positive $k\in \Z$ we have
\begin{align*}
j \in \co_L + p^{k-1} \Delta & \iff j_-\in p^{k-1} \co_L\Pi \\
&\iff \ord_p(\deg(j_-))  \ge 2k-1 \\
&\iff \frac{\ord_\mathfrak{p}(\cmdeg(j)) +1   }{2} \ge k.
\end{align*}

The deformation functor $\mathrm{Def}(\mathfrak{g},\co_L)$ is represented by $W$, and hence
$$
\mathrm{Def}(\mathbf{E}_1,\mathbf{E}_2)\iso \mathrm{Def}(\mathfrak{g},\co_L) \times \mathrm{Def}(\mathfrak{g},\co_L)
$$
is represented by $W\widehat{\otimes}_W W\iso W$.
If we let $\mathfrak{G}$ be the universal deformation of $\mathfrak{g}$ with its $\co_L$-action
then the  $p$-divisible group of the universal deformation of  $(\mathbf{E}_1,\mathbf{E}_2)$ is identified with
$(\mathfrak{G},\mathfrak{G})$, and  the deformation functor of the triple $(\mathbf{E}_1,\mathbf{E}_2,j)$
is represented by $W/p^kW$ where $k$ is the largest integer for which $j$ lifts to an
endomorphism of the reduction  $\mathfrak{G} \otimes_W W/p^kW$.  Combining (\ref{gross deform}) with the calculation of preceding paragraph, this $k$
is given by the formula (\ref{unr length}).
\end{proof}

\begin{Lem}\label{Lem:local ring ram}
If  $p$ is ramified in one of $K_1$ or $K_2$ then the deformation functor
$\mathrm{Def}(\mathbf{E}_1,\mathbf{E}_2,j)$ is represented by a local Artinian $W$-algebra of
length
$$
  \frac{\ord_\mathfrak{p}(\cmdeg(j)) + \ord_{\mathfrak{p}}(\mathfrak{D})+ 1   }{2}.
$$
\end{Lem}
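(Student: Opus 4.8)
The plan is to follow the proof of Lemma \ref{Lem:local ring unr} step for step, the one genuinely new phenomenon being that the two CM curves no longer possess a common deformation. Since $(\mathbf{E}_1,\mathbf{E}_2)$ is supersingular and $\gcd(d_1,d_2)=1$, exactly one of $K_1,K_2$ is ramified at $p$ and the other is inert; I may assume $p$ is ramified in $K_1$ and inert in $K_2$, so that $\co_{L_1}$ is the maximal order in a ramified quadratic extension $L_1$ of $\Q_p$ while $\co_{L_2}\iso\Z_{p^2}$ is unramified. Then $\mathcal{W}_{L_2}\iso W$, the functor $\mathrm{Def}(\mathbf{E}_1,\mathbf{E}_2)$ is represented by $\mathcal{W}_{L_1}\widehat{\otimes}_W W\iso\mathcal{W}_{L_1}$, and since $\mathcal{W}_{L_1}$ is a discrete valuation ring with residue field $\F_p^\alg$ one has $\length(\mathcal{W}_{L_1}/\pi_{L_1}^k)=k$. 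Writing $\mathfrak{G}^{(1)}$ for the universal (Lubin-Tate) deformation of $(E_1,\kappa_1)$ over $\mathcal{W}_{L_1}$ and $\mathfrak{G}^{(2)}$ for the rigid canonical lift of $(E_2,\kappa_2)$ pulled back from $W$, the functor $\mathrm{Def}(\mathbf{E}_1,\mathbf{E}_2,j)$ is the closed subfunctor of $\mathrm{Spf}\,\mathcal{W}_{L_1}$ over which $j$ deforms, hence is represented by $\mathcal{W}_{L_1}/\pi_{L_1}^k$ with $k$ the largest integer for which $j$ lifts to $\Hom(\mathfrak{G}^{(1)}_k,\mathfrak{G}^{(2)}_k)$. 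It therefore suffices to prove $k=\ord_\Delta(j)+1$.

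Here lies the one real difference from Lemma \ref{Lem:local ring unr}, and the step I expect to be the crux: because $K_1$ and $K_2$ meet $p$ differently, $\mathfrak{G}^{(1)}$ and $\mathfrak{G}^{(2)}$ are genuinely distinct deformations of $\mathfrak{g}$ (the first has ramified CM, the second unramified CM), so $j$ must be lifted as a homomorphism between different $p$-divisible groups rather than, as before, as an endomorphism of a single one. I would resolve this by Grothendieck-Messing theory: on the Dieudonn\'e module $M$ of $\mathfrak{g}$ over $R_k=\mathcal{W}_{L_1}/\pi_{L_1}^k$ the homomorphism $j$ lifts precisely when $j(\mathrm{Fil}_1)\subseteq\mathrm{Fil}_2$, where $\mathrm{Fil}_2\subset M$ is the constant $\co_{L_2}$-eigenline cutting out the canonical lift and $\mathrm{Fil}_1\subset M$ is the $\co_{L_1}$-stable line cutting out $\mathfrak{G}^{(1)}_k$. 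As $\mathfrak{G}^{(1)}$ is the full ramified Lubin-Tate family, $\mathrm{Fil}_1$ departs from $\mathrm{Fil}_2$ at exact rate $\pi_{L_1}$; inserting this into the action of $\Delta=\Z_{p^2}\oplus\Z_{p^2}\Pi$ on $M$ (with $\co_{L_2}=\Z_{p^2}$ acting diagonally and $\kappa_1(\sqrt{d_1})$ a unit multiple of $\Pi$) turns the condition $j(\mathrm{Fil}_1)\subseteq\mathrm{Fil}_2$ into $j\in\pi_{L_1}^{k-1}\Delta=\mathfrak{m}_\Delta^{k-1}$. In other words the lifting criterion is Gross's formula (\ref{gross deform}) with the summand $\co_{L_1}$ deleted, the deletion being exactly the effect of the target carrying $\co_{L_2}$- rather than $\co_{L_1}$-CM. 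Since $\kappa_1(\sqrt{d_1})$ is a uniformizer of $\Delta$, this gives largest $k=\ord_\Delta(j)+1$, and the precise lifting statement can be quoted from \cite{gross86,rapoport96} just as in the inert case.

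It remains to rewrite $\ord_\Delta(j)+1$ in the stated form, and for this I use Lemma \ref{Lem:CM model I}, which identifies $(V_p(\mathbf{E}_1,\mathbf{E}_2),\cmdeg)$ with $(K_p,\beta\cdot\mathrm{Nm}_{K_p/F_p})$ as $\co_{K,p}$-quadratic modules for some $\beta$ with $\beta\co_{F,p}=\mathfrak{p}\mathfrak{D}^{-1}_p$, carrying $L_p(\mathbf{E}_1,\mathbf{E}_2)$ to $\co_{K,p}$, and which simultaneously identifies $\deg$ with $\mathrm{Nm}$ on $\Delta$. Under the first identification $j$ corresponds to some $z\in\co_{K,p}$, and the $\co_{F,p}$-action $t\action j=\kappa_2(t_2)\circ j\circ\kappa_1(\overline{t}_1)$ sends a uniformizer of $F_p$ to the operator $\kappa_2(\sqrt{d_2})\circ(\,\cdot\,)\circ\kappa_1(\sqrt{d_1})$, which satisfies $\mathfrak{q}^n\action\Delta=\mathfrak{m}_\Delta^n$ because $\kappa_2(\sqrt{d_2})$ is a unit and $\kappa_1(\sqrt{d_1})$ a uniformizer of $\Delta$. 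Hence $\ord_\Delta(j)=\ord_\mathfrak{q}(z)$, and since $K_p/F_p$ is unramified and $\cmdeg(j)=\beta z\overline{z}$,
\[
\ord_\mathfrak{p}(\cmdeg(j))=\ord_\mathfrak{p}(\beta)+2\ord_\mathfrak{q}(z)=\bigl(1-\ord_\mathfrak{p}(\mathfrak{D})\bigr)+2\ord_\Delta(j).
\]
Solving for $\ord_\Delta(j)$ and adding $1$ yields
\[
\length\bigl(\mathrm{Def}(\mathbf{E}_1,\mathbf{E}_2,j)\bigr)=\ord_\Delta(j)+1=\frac{\ord_\mathfrak{p}(\cmdeg(j))+\ord_\mathfrak{p}(\mathfrak{D})+1}{2},
\]
which is the asserted formula.
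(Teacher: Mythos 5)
Your reduction of the lemma to the single statement that $j$ lifts to a homomorphism $\mathfrak{G}^{(1)}_k\map{}\mathfrak{G}^{(2)}_k$ exactly for $k\le \ord_\Delta(j)+1$ (equivalently $\Hom(\mathfrak{G}^{(1)}_k,\mathfrak{G}^{(2)}_k)=\mathfrak{m}_\Delta^{k-1}$), and your conversion of $\ord_\Delta(j)+1$ into the stated formula via Lemma \ref{Lem:CM model I}, both match the paper (which takes $p$ ramified in $K_2$ rather than $K_1$, an immaterial difference). The genuine gap is at the crux you yourself flagged: the sentence ``$\mathrm{Fil}_1$ departs from $\mathrm{Fil}_2$ at exact rate $\pi_{L_1}$'' is asserted, not proved, and it is not something you can quote --- it is equivalent to the statement being established. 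Gross's formula (\ref{gross deform}) concerns endomorphisms of a single deformation carrying an $\co_L$-action, and neither \cite{gross86} nor \cite{rapoport96} computes homomorphisms between two \emph{distinct} deformations of $\mathfrak{g}$ (one with ramified CM, one with unramified CM), so the closing claim that the lifting statement ``can be quoted from \cite{gross86,rapoport96} just as in the inert case'' fails precisely at the new phenomenon. Two further problems: the Grothendieck--Messing framework as you invoke it over $\mathcal{W}_{L_1}/\pi_{L_1}^k$ requires divided powers on the kernel of the thickening, which for a ramified quadratic base ($e=2$) fails when $p=2$ --- a case the lemma's hypothesis permits (e.g.\ $d_1=-4$, $d_2=-3$); and your normalization $\kappa_1(\sqrt{d_1})$ is not a uniformizer of $\Delta$ at $p=2$ (for $d_1=-4$ one has $\ord_\Delta(\kappa_1(\sqrt{d_1}))=2$), so the identification $\ord_\Delta(j)=\ord_\mathfrak{q}(z)$ must be justified, as the paper does, with a genuine uniformizer $\varpi$ of the ramified $\co_{K_i,p}$.

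The paper closes the same gap by a purely algebraic device that you would do well to adopt. Factor $j=\Pi^m u$ with $u\in\Delta^\times$ and $m=\ord_\Delta(j)$. If the unit $u$ lifted to a homomorphism at level $k\ge 2$, the lift would be an isomorphism, and conjugating by $u$ the uniformizer $\Pi$ (which lifts to an endomorphism of the ramified Lubin--Tate family) would produce a lifted uniformizer on the other deformation; since the unramified order $\Z_{p^2}$ together with any uniformizer generates $\Delta$ as a $\Z_p$-algebra, all of $\Delta$ would then lift, contradicting --- via (\ref{gross deform}) --- that $\mathrm{Def}(\mathfrak{g},\Delta)$ is represented by $\F_p^\alg$. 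Hence $u$ lifts to level $1$ and no further, and \cite[Proposition 5.2]{ARGOS-8} then propagates this to show $j=\Pi^m u$ lifts to level $m+1$ and no further. That reference (Wewers), not \cite{gross86} or \cite{rapoport96}, is what your proof actually needs; with it, or with an honest display/Dieudonn\'e computation of the universal $\co_{L_1}$-stable filtration in place of the asserted ``rate of departure,'' the rest of your argument goes through.
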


\begin{proof}
The prime $p$ is ramified in one of $K_1$ or $K_2$ and is inert in the other.
For simplicity let us assume that $p$ is ramified in $K_2$ and inert in $K_1$.  Let $\co_{L_i}$ be the image
of $\kappa_i:\co_{K_i,p}\map{}\Delta$.  If we choose a uniformizing parameter $\varpi\in \co_{K_2,p}$ then
$\pi=1\otimes\varpi$ is a uniformizing parameter of $\co_{K,p}$ and $\Pi = \kappa_2(\varpi)$ is a uniformizing
parameter of both $\co_{L_2}$ and $\Delta$.   The action (\ref{K action}) of $\pi \in \co_{K,p}$ on
$L_p(\mathbf{E}_1,\mathbf{E}_2) \iso \Delta$ is simply left multiplication by $\Pi$.  Let $\beta\in \co_{F,p}$
be as in Lemma \ref{Lem:CM model I}, and choose an $\co_{K,p}$-linear  isomorphism
$$
\big( \co_{K,p}, \beta\cdot \mathrm{Nm}_{K/F} \big) \iso \big( \Delta, \cmdeg \big).
$$
As this isomorphism carries $\pi^k \co_{K,p}$ isomorphically to $\Pi^k\Delta$, if we view
$j$ as both an element of $\Delta$ and an element of $\co_{K,p}$ we have
\begin{align*}
\ord_\Delta(j) & =  \ord_\mathfrak{p}(j)  \\
& = \frac{ \ord_\mathfrak{p}(\mathrm{Nm}_{K/F}(j) ) }{2} \\
& = \frac{ \ord_\mathfrak{p}(\beta\cdot  \mathrm{Nm}_{K/F}(j) ) +\ord_\mathfrak{p}(\mathfrak{D}) -1 }{2} \\
& = \frac{ \ord_\mathfrak{p}(\cmdeg(j) ) +\ord_\mathfrak{p}(\mathfrak{D}) -1 }{2} .
\end{align*}

 The functor $\mathrm{Def}(\mathfrak{g},\co_{L_i})$ is represented by $\mathcal{W}_{L_i}$.
 In particular  $\mathrm{Def}(\mathfrak{g},\co_{L_1})$ is represented by $W$, and so
 $\mathfrak{g}$, with its $\co_{L_1}$-action, admits a unique deformation to $\mathcal{W}_{L_2}$
 corresponding to the unique element of  $\Hom_\mathcal{CLN}(W,\mathcal{W}_{L_2})$.
 Call this deformation $\mathfrak{G}^{(1)}$,   let
$\mathfrak{G}^{(2)}$ be the universal deformation of $\mathfrak{g}$ to $\mathcal{W}_{L_2}$, and let
$\mathfrak{G}^{(i)}_m$ be the reduction of $\mathfrak{G}^{(i)}$ to $\mathcal{W}_{L_2}/(\pi^m)$.    The deformation
functor $\mathrm{Def}(\mathbf{E}_1,\mathbf{E}_2)$ is then represented by
$W\widehat{\otimes}_W\mathcal{W}_{L_2} \iso \mathcal{W}_{L_2}$,  the $p$-divisible group of the universal deformation
of the pair $(\mathbf{E}_1,\mathbf{E}_2)$ is $(\mathfrak{G}^{(1)},\mathfrak{G}^{(2)})$, and
the functor $\mathrm{Def}(\mathbf{E}_1,\mathbf{E}_2,j)$ is represented by $\mathcal{W}_{L_2}/(\pi^k)$
where $k$ is the largest integer such that the homomorphism $j:\mathfrak{g}\map{}\mathfrak{g}$ lifts to a homomorphism
$\mathfrak{G}^{(1)}_k \map{}\mathfrak{G}^{(2)}_k$.

Factor $j=\Pi^m \cdot u$ with $u\in \Delta^\times$ and $m=\ord_\Delta(j)$.
 Suppose $u$ lifts to a homomorphism $\mathfrak{G}^{(1)}_k\map{}\mathfrak{G}^{(2)}_k$.
 This lift is necessarily an isomorphism, and as $\Pi\in \co_{L_2}$ lifts to an endomorphism of
 $\mathfrak{G}^{(2)}$ we find that  $u^{-1}\circ \Pi \circ u$ also  lifts to an
an endomorphism of $\mathfrak{G}_k^{(1)}$.  But $\co_{L_1}$ and  $u^{-1}\circ \Pi \circ u$ generate
 all of $\Delta$ as a $\Z_p$-algebra (as do any subalgebra isomorphic to $\Z_{p^2}$ and
 uniformizer of $\Delta$).  Thus  every element of $\Delta$ lifts to an endomorphism of $\mathfrak{G}_k^{(1)}$.
It follows from (\ref{gross deform}) that the functor $\mathrm{Def}(\mathfrak{g},\Delta)$ is
represented by $\F_p^\alg$, and we deduce that $k=1$.  Thus $u$ lifts to an
endomorphism $\mathfrak{G}^{(1)}_1\map{}\mathfrak{G}^{(2)}_1$
but not to $\mathfrak{G}^{(1)}_2\map{}\mathfrak{G}^{(2)}_2$.   It now follows immediately from
\cite[Proposition 5.2]{ARGOS-8} that $j=\Pi^m\cdot u$ lifts to an endomorphism
$\mathfrak{G}_{m+1}^{(1)} \map{} \mathfrak{G}_{m+1}^{(2)}$ but not to
$\mathfrak{G}_{m+2}^{(1)} \map{} \mathfrak{G}_{m+2}^{(2)}$, and so
the functor $\mathrm{Def}(\mathbf{E}_1,\mathbf{E}_2,j)$ is represented by $\mathcal{W}_{L_2}/(\pi^{m+1})$ where
$$
m+1=\ord_\Delta(j)+1 =  \frac{ \ord_\mathfrak{p}(\cmdeg(j) ) +\ord_\mathfrak{p}(\mathfrak{D}) +1 }{2}.
$$
\end{proof}

Let $\mathcal{C}$ be any algebraic stack over $\Spec(\Z)$ and suppose
 $x\in \mathcal{C}(\F_p^\alg)$ is a geometric point.  An \emph{\'etale neighborhood} of
$x$ is a commutative diagram in the category of algebraic stacks
\begin{equation}\label{etale}
\xymatrix{
 & { U } \ar[d]  \\
{\Spec(\F_p^\alg) } \ar[r]_x  \ar[ur]^{\tilde{x}} & {\mathcal{C}}
}
\end{equation}
in which $U$ is a scheme and the vertical arrow is an \'etale morphism.  The \emph{strictly
Henselian local ring} of $\mathcal{C}$ at $x$ is the direct limit
$$
\co^\mathrm{sh}_{\mathcal{C},x} = \varinjlim_{(U,\tilde{x})} \co_{U,\tilde{x}}
$$
over all \'etale neighborhoods of $x$, where $\co_{U,\tilde{x}}$ is the usual local ring of the scheme
$U$ at (the image of) $\tilde{x}$.  As the name suggests, $\co^\mathrm{sh}_{\mathcal{C},x}$
is a strictly Henselian local ring, and has residue field $\F_p^\alg$.   If $T\subset W$ is a subring
that is \'etale as a $\Z$-algebra then $U=\mathcal{C}\times_{\Spec(\Z)} \Spec(T)$ is naturally an \'etale neighborhood
of $x$, and so $\co^\mathrm{sh}_{\mathcal{C},x}$ is a $T$-algebra.  As the union of all such $T$
is  dense in $W$, the completed strictly Henselian local ring
$\widehat{\co}^\mathrm{sh}_{\mathcal{C},x}$ is naturally a $W$-algebra.

\begin{Prop}\label{Prop:local length}
Suppose $\alpha\in F^\times$ satisfies $\Sppt(\alpha)=\{p\}$.  For any
$x \in \mathcal{X}_\alpha(\F_p^\alg)$   the strictly Henselian local ring of
$\mathcal{X}_\alpha$   at $x$ is Artinian of length
$$
\nu_{\mathfrak{p}}(\alpha)= \frac{\ord_{\mathfrak{p}}(\alpha\mathfrak{D})+1}{2}
$$
where $\mathfrak{p}$ is the reflex prime of the pair  $(\mathbf{E}_1,\mathbf{E}_2)$ underlying the
triple $x=(\mathbf{E}_1,\mathbf{E}_2, j)$.
\end{Prop}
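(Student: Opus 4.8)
The plan is to identify the completed strictly Henselian local ring $\widehat{\co}^\mathrm{sh}_{\mathcal{X}_\alpha,x}$ with the universal deformation ring of the triple $x=(\mathbf{E}_1,\mathbf{E}_2,j)$, and then to read off its length from the computations of the previous subsection. First I would record that, since $\mathcal{X}_\alpha(\F_p^\alg)$ is nonempty, Proposition \ref{Prop 2.2.2} guarantees that $p$ is nonsplit in both $K_1$ and $K_2$ and that $\cmdeg(j)=\alpha$; in particular $x$ lies in characteristic $p$, consistent with $\Sppt(\alpha)=\{p\}$.

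Next I would invoke the deformation-theoretic description of the local structure of the Deligne--Mumford stack $\mathcal{X}_\alpha$. Choosing an \'etale neighborhood $(U,\tilde{x})$ as in (\ref{etale}), the morphism $U\to\mathcal{X}_\alpha$ is formally \'etale, so it induces an isomorphism $\widehat{\co}^\mathrm{sh}_{\mathcal{X}_\alpha,x}\cong\widehat{\co}_{U,\tilde{x}}$ and identifies the deformation functor of $\tilde{x}$ on $U$ with that of $x$ on $\mathcal{X}_\alpha$. Because $\mathcal{X}_\alpha$ is the moduli stack of triples and because the condition $\cmdeg=\alpha$ is locally constant in families, this latter functor is precisely $\mathrm{Def}(\mathbf{E}_1,\mathbf{E}_2,j)$ on $\mathcal{CLN}$. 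Thus $\widehat{\co}^\mathrm{sh}_{\mathcal{X}_\alpha,x}$ pro-represents $\mathrm{Def}(\mathbf{E}_1,\mathbf{E}_2,j)$.

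With this identification in hand, I would apply the computation of the representing ring in the two possible cases. If $p$ is inert in both $K_1$ and $K_2$, then by Remark \ref{Rem:prime description} $p$ splits in $F$, so $\ord_\mathfrak{p}(\mathfrak{D})=0$, and Lemma \ref{Lem:local ring unr} gives that $\mathrm{Def}(\mathbf{E}_1,\mathbf{E}_2,j)$ is represented by an Artinian $W$-algebra of length
$$
\frac{\ord_\mathfrak{p}(\cmdeg(j))+1}{2}=\frac{\ord_\mathfrak{p}(\alpha\mathfrak{D})+1}{2}=\nu_\mathfrak{p}(\alpha).
$$
If instead $p$ is ramified in one of $K_1$ or $K_2$ (it cannot ramify in both, since $\gcd(d_1,d_2)=1$), then Lemma \ref{Lem:local ring ram} gives length
$$
\frac{\ord_\mathfrak{p}(\cmdeg(j))+\ord_\mathfrak{p}(\mathfrak{D})+1}{2}=\frac{\ord_\mathfrak{p}(\alpha\mathfrak{D})+1}{2}=\nu_\mathfrak{p}(\alpha),
$$
again using $\cmdeg(j)=\alpha$. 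In either case the representing ring is Artinian, hence complete, so $\co^\mathrm{sh}_{\mathcal{X}_\alpha,x}=\widehat{\co}^\mathrm{sh}_{\mathcal{X}_\alpha,x}$ is Artinian of the asserted length, a value manifestly independent of $x$.

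The routine parts---the case analysis and the arithmetic of $\ord_\mathfrak{p}$---are immediate once the two Lemmas are available. The one step requiring care, and the main obstacle, is the clean identification in the second paragraph: one must justify that the formal completion of the stack's strict henselization genuinely pro-represents $\mathrm{Def}(\mathbf{E}_1,\mathbf{E}_2,j)$, rather than some quotient of it by automorphisms. Here it is essential that the automorphism factor $e_x$ already appears separately in the definition (\ref{arakelov degree}) of the Arakelov degree, so that the length entering the present proposition is precisely the length of the full deformation ring; the finite group $\Aut(x)$ affects the global stacky structure but not the strictly Henselian local ring, which is computed on an \'etale chart where the Serre--Tate theorem reduces the deformation problem to that of the associated $p$-divisible groups treated in Lemmas \ref{Lem:local ring unr} and \ref{Lem:local ring ram}.
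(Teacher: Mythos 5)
Your proposal is correct and takes essentially the same route as the paper's own proof: the paper likewise identifies $\widehat{\co}^\mathrm{sh}_{\mathcal{X}_\alpha,x}$ with the ring pro-representing $\mathrm{Def}(\mathbf{E}_1,\mathbf{E}_2,j)$ via formally \'etale neighborhoods, and then deduces the length from Lemmas \ref{Lem:local ring unr} and \ref{Lem:local ring ram}. Your additional remarks---the explicit check that both lemma formulas reduce to $\nu_{\mathfrak{p}}(\alpha)$ (using $\ord_{\mathfrak{p}}(\mathfrak{D})=0$ when $p$ is inert in both $K_1$ and $K_2$, so split in $F$), and the observation that $\Aut(x)$ does not affect the strictly Henselian local ring---merely spell out steps the paper leaves implicit.
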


\begin{proof}
  Suppose $R$ is any local Artinian $W$-algebra with residue field $\F_p^\alg$, and
$z \in \mathrm{Def}(\mathbf{E}_1,\mathbf{E}_2,j)(R)$.  Then $z$ determines  a point
$z\in \mathcal{X}_\alpha(R)$ whose image under the reduction map
$\mathcal{X}_\alpha(R)\map{}\mathcal{X}_\alpha(\F_p)$ is $x$, and so we have
a commutative diagram
$$
\xymatrix{
{ \Spec(R) } \ar[rd]^z  \\
{\Spec(\F_p^\alg) } \ar[r]_x  \ar[u] & {\mathcal{X}_\alpha}.
}
$$
Given an  \'etale neighborhood (\ref{etale}) of $x$ there is a unique $\tilde{z}:\Spec(R)\map{} U$
making the diagram
$$
\xymatrix{
{ \Spec(R) } \ar[rd] \ar[r]^{\tilde{z}} & {U} \ar[d]\\
{\Spec(\F_p^\alg) } \ar[r]_x\ar[ur]  \ar[u] & {\mathcal{X}_\alpha}
}
$$
commute (the morphism $U\map{}\mathcal{X}_\alpha$ is formally \'etale).
The morphism of schemes  $\tilde{z}$ induces a ring homomorphism  $\tilde{z}:\co_{U,\tilde{x}}\map{} R$, and
by varying the \'etale  neighborhood we obtain a map
$\tilde{z}: \co^\mathrm{sh}_{\mathcal{X}_\alpha,x} \map{}R$ that  induces the identity  on  residue fields.
 In particular $\tilde{z}$ extends uniquely to
$\tilde{z} \in \Hom_{\mathcal{CLN}}(  \widehat{\co}^\mathrm{sh}_{\mathcal{X}_\alpha,x} , R)$.
It is now easy to check that the construction $z\mapsto \tilde{z}$ establishes a  bijection
$$
\mathrm{Def}(\mathbf{E}_1,\mathbf{E}_2,j)(R) \map{}
\Hom_{\mathcal{CLN}}(  \widehat{\co}^\mathrm{sh}_{\mathcal{X}_\alpha,x} , R)
$$
for every Artinian $R$ in $\mathcal{CLN}$, and that  the functor
$\mathrm{Def}(\mathbf{E}_1,\mathbf{E}_2,j)$ is represented by the completed strictly Henselian
local ring $ \widehat{\co}^\mathrm{sh}_{\mathcal{X}_\alpha,x}$.  The claim is now immediate
from Lemmas \ref{Lem:local ring unr} and \ref{Lem:local ring ram}.
\end{proof}


\subsection{Final formula}
\label{ss:arithmetic section}


Recall our notation:  $\chi$ is the quadratic Hecke character associated to the extension $K/F$;
if $\alpha\in F^\times$ is totally positive then
 $\Diff(\alpha)$ is  the set of all finite primes $\mathfrak{p}$ of $F$ such that
$\chi_\mathfrak{p}(\alpha\mathfrak{D} )=-1$;
for a fractional $\co_F$-ideal $\mathfrak{b}$ we denote by $\rho(\mathfrak{b})$  the number of ideals
$\mathfrak{B}\subset \co_E$ satisfying $\mathrm{Nm}_{K/F}(\mathfrak{B}) = \mathfrak{b}$.

\begin{Thm}\label{Thm:local length}
Suppose $\alpha\in F^\times$.  If $|\Diff(\alpha)| > 1$ then  $\mathcal{X}_\alpha=\emptyset$.
If $\Diff(\alpha)=\{\mathfrak{p}\}$ and $p\Z=\mathfrak{p}\cap \Z$ then
$\mathcal{X}_\alpha$ is supported in characteristic $p$,
the strictly Henselian local ring of every geometric point $x\in  \mathcal{X}_\alpha(\F_p^\alg)$ is
Artinian of length
$$
\nu_{\mathfrak{p}}(\alpha)= \frac{1}{2}\cdot \ord_{\mathfrak{p}}(\alpha\mathfrak{p}\mathfrak{D}),
$$
and the CM pair $(\mathbf{E}_1,\mathbf{E}_2)$ underlying the triple $x=(\mathbf{E}_1,\mathbf{E}_2,j)$
has reflex prime $\mathfrak{p}$.
\end{Thm}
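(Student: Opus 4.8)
The plan is to reduce every assertion to a single local lemma: \emph{a geometric point of $\mathcal{X}_\alpha$ with reflex prime $\mathfrak{p}$ forces $\Diff(\alpha)=\{\mathfrak{p}\}$}, and then to read off the length from Proposition \ref{Prop:local length}. First I would set up: by Corollary \ref{Cor 2.2.4}, a nonempty $\mathcal{X}_\alpha$ has $\alpha$ totally positive and all of its geometric points supersingular and lying over a single rational prime, so I may fix a point $x=(\mathbf{E}_1,\mathbf{E}_2,j)\in\mathcal{X}_\alpha(\F_q^\alg)$ together with its reflex prime $\mathfrak{p}$. I also record the elementary identity $\frac{\ord_\mathfrak{p}(\alpha\mathfrak{D})+1}{2}=\frac12\ord_\mathfrak{p}(\alpha\mathfrak{p}\mathfrak{D})$, using $\ord_\mathfrak{p}(\mathfrak{p})=1$, which reconciles the formula here with that of Proposition \ref{Prop:local length}.

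The heart of the argument is the following local computation, which I expect to be the main obstacle. Since $K/F$ is unramified at every finite prime, each local character $\chi_w$ is unramified; thus $\chi_w(\alpha\mathfrak{D})=1$ when $w$ splits in $K$, while $\chi_w(\alpha\mathfrak{D})=(-1)^{\ord_w(\alpha\mathfrak{D})}$ when $w$ is inert, so that
\begin{equation*}
\Diff(\alpha)=\{\, w \text{ finite, inert in } K \ :\ \ord_w(\alpha\mathfrak{D}) \text{ odd}\,\}.
\end{equation*}
Now I invoke Theorem \ref{Thm:quadratic model}: choosing $\beta\in\widehat{F}^\times$ with $\beta\widehat{\co}_F=\mathfrak{p}\mathfrak{D}^{-1}\widehat{\co}_F$, the element $j$ corresponds to some $x\in\widehat{\co}_K$ with $\mathrm{Nm}_{K/F}(x)=\alpha\beta^{-1}$. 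For a finite place $w$ inert in $K$ the extension $K_w/F_w$ is the unramified quadratic extension, so the norm of any nonzero element of $\co_{K,w}$ has even nonnegative valuation; hence $\ord_w(\alpha\beta^{-1})$ is even for every such $w$. Feeding in $\ord_w(\beta)=\ord_w(\mathfrak{p}\mathfrak{D}^{-1})$ translates this into the statements that $\ord_w(\alpha\mathfrak{D})$ is even for inert $w\neq\mathfrak{p}$ and that $\ord_\mathfrak{p}(\alpha\mathfrak{D})$ is odd. Comparing with the displayed description of $\Diff(\alpha)$ gives $\Diff(\alpha)=\{\mathfrak{p}\}$. The delicate point is precisely this bookkeeping: it relies on the exact normalization $\beta\widehat{\co}_F=\mathfrak{p}\mathfrak{D}^{-1}$ supplied by Theorem \ref{Thm:quadratic model}, and on distinguishing the reflex prime from all other inert places by the parity of $\ord_w(\alpha\beta^{-1})$.

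Granting the lemma, the theorem follows formally. For the first assertion I argue by contraposition: if $\mathcal{X}_\alpha\neq\emptyset$, any geometric point yields a reflex prime $\mathfrak{p}$ with $\Diff(\alpha)=\{\mathfrak{p}\}$, so $|\Diff(\alpha)|=1$; equivalently $|\Diff(\alpha)|>1$ forces $\mathcal{X}_\alpha=\emptyset$. For the second assertion, suppose $\Diff(\alpha)=\{\mathfrak{p}\}$ with $\mathfrak{p}$ lying above $p$. If $\mathcal{X}_\alpha=\emptyset$ there is nothing to prove; otherwise Proposition \ref{Prop 2.2.2} gives $\Sppt(\alpha)=\{q\}$ for the common characteristic $q$ of its (supersingular) points, and the reflex prime of any such point lies above $q$ and, by the lemma, equals the unique element $\mathfrak{p}$ of $\Diff(\alpha)$. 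Hence $q=p$, so $\mathcal{X}_\alpha$ is supported in characteristic $p$ and every underlying CM pair has reflex prime $\mathfrak{p}$. Finally, since $\Sppt(\alpha)=\{p\}$, Proposition \ref{Prop:local length} computes the strictly Henselian local ring at each geometric point to be Artinian of length $\frac{\ord_\mathfrak{p}(\alpha\mathfrak{D})+1}{2}$, which by the identity recorded above equals $\nu_\mathfrak{p}(\alpha)=\frac12\ord_\mathfrak{p}(\alpha\mathfrak{p}\mathfrak{D})$, completing the proof.
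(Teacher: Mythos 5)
Your proposal is correct and takes essentially the same route as the paper: use Corollary \ref{Cor 2.2.4} to reduce to a supersingular geometric point, use Theorem \ref{Thm:quadratic model} to show the reflex prime is the unique element of $\Diff(\alpha)$, and then restate Proposition \ref{Prop:local length}. The only difference is cosmetic --- you track parities of $\ord_w(\alpha\beta^{-1})$ where the paper phrases the same computation as the character identity $\chi_{\mathfrak l}(\alpha)=\chi_{\mathfrak l}(\mathfrak{p}\mathfrak{D}^{-1})$; just note that your parity argument at $w=\mathfrak{p}$ silently uses that the reflex prime is inert in $K$, which follows from Proposition \ref{Prop 2.2.2} and Remark \ref{Rem:prime description} and which the paper makes explicit.
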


\begin{proof}
Suppose $\mathcal{X}_\alpha\not=\emptyset$, and recall from
Corollary \ref{Cor 2.2.4} that $\mathcal{X}_\alpha$ is supported in a single nonzero characteristic $p$.
Fix a triple  $(\mathbf{E}_1,\mathbf{E}_2,j) \in \mathcal{X}_\alpha(\F_p^\alg)$
and let $\mathfrak{p}$ be the reflex prime of $(\mathbf{E}_1,\mathbf{E}_2)$.
As $p$ is nonsplit in both $K_1$ and $K_2$ it follows that
$\mathfrak{p}$ is inert in $K$, and so for every finite place $\mathfrak l$ of $F$
$$
\chi_\mathfrak l(\mathfrak{p})=\begin{cases}
-1 & \hbox{if }\mathfrak l=\mathfrak{p} \\
1 &\hbox{otherwise}.
\end{cases}
$$
On the other hand Theorem \ref{Thm:quadratic model} implies that $(\widehat{K},\beta x\overline{x})$
represents $\alpha$ for any $\beta\in \widehat{F}^\times$ satisfying
$\beta\widehat{\co}_F = \mathfrak{p}\mathfrak{D}^{-1}\widehat{\co}_F$.  This implies that
$\chi_\mathfrak l(\alpha) = \chi_\mathfrak l(\mathfrak{p}\mathfrak{D}^{-1})$ for every finite place $\mathfrak l$ of $F$, and we deduce that
$\Diff(\alpha)=\{\mathfrak{p}\}$.   We have now shown that if $\mathcal{X}_\alpha\not=\emptyset$
and is supported in characteristic $p$ then $\Diff(\alpha)$ contains a single prime $\mathfrak{p}$.  This
prime $\mathfrak{p}$ lies above $p$ and is equal to the reflex prime of every triple
$(\mathbf{E}_1,\mathbf{E}_2,j) \in \mathcal{X}_\alpha(\F_p^\alg)$.
The stated formula for the length of the strictly Henselian local ring of a geometric
point is now just a restatement of Proposition \ref{Prop:local length}.
\end{proof}

\begin{Thm}\label{Thm:final formula}
Suppose $\alpha\in F^\times$ is totally positive.
If $\Diff(\alpha)=\{\mathfrak{p}\}$ then
$$
\deg(\mathcal{X}_\alpha) = \frac{1}{2}\cdot \log(p) \cdot
\ord_\mathfrak{p}(\alpha\mathfrak{p}\mathfrak{D}) \cdot
\rho(\alpha\mathfrak{D}\mathfrak{p}^{-1})
$$
where $p\Z=\mathfrak{p}\cap\Z$, and the left hand side is the Arakelov degree of (\ref{arakelov degree}).
\end{Thm}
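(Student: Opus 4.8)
The plan is to unwind the definition (\ref{arakelov degree}) of the Arakelov degree and reduce the computation entirely to the weighted point-count produced by Proposition \ref{Prop:unfolding}. By Theorem \ref{Thm:local length}, under the hypothesis $\Diff(\alpha)=\{\mathfrak{p}\}$ the stack $\mathcal{X}_\alpha$ is supported in characteristic $p$, and the length of the strictly Henselian local ring at \emph{every} geometric point $x$ equals the constant $\nu_\mathfrak{p}(\alpha)=\frac{1}{2}\ord_\mathfrak{p}(\alpha\mathfrak{p}\mathfrak{D})$, independent of $x$. Pulling this constant out of the inner sum in (\ref{arakelov degree}) I would be left with
$$
\deg(\mathcal{X}_\alpha) = \log(p)\cdot \nu_\mathfrak{p}(\alpha)\cdot \sum_{x\in[\mathcal{X}_\alpha(\F_p^\alg)]} e_x^{-1},
$$
so the whole problem collapses to evaluating the weighted count $\sum_x e_x^{-1}$.

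Next I would translate this weighted count into the naive count of homomorphisms $j$ that appears in Proposition \ref{Prop:unfolding}. Each isomorphism class $x=(\mathbf{E}_1,\mathbf{E}_2,j)$ has automorphism group $e_x$ equal to the stabilizer of $j$ inside $\Aut(\mathbf{E}_1,\mathbf{E}_2)$, and by Lemma \ref{C stab} the latter has order $\mathbf{w}_1\mathbf{w}_2$. Grouping the classes $x$ according to their underlying CM pair and applying the orbit-stabilizer relation, the contribution of a single pair $(\mathbf{E}_1,\mathbf{E}_2)$ to $\sum_x e_x^{-1}$ is exactly $|\{j:\cmdeg(j)=\alpha\}|/(\mathbf{w}_1\mathbf{w}_2)$. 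Hence
$$
\sum_{x\in[\mathcal{X}_\alpha(\F_p^\alg)]} e_x^{-1} = \frac{1}{\mathbf{w}_1\mathbf{w}_2}\sum_{(\mathbf{E}_1,\mathbf{E}_2)} |\{j\in L(\mathbf{E}_1,\mathbf{E}_2):\cmdeg(j)=\alpha\}|,
$$
where the outer sum runs over the isomorphism classes of CM pairs with reflex prime $\mathfrak{p}$ (these are precisely the pairs supporting points of $\mathcal{X}_\alpha$, again by Theorem \ref{Thm:local length}).

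Finally I would evaluate the right-hand sum. By Proposition \ref{Prop:naked count} the CM pairs with reflex prime $\mathfrak{p}$ form exactly two free $\Gamma$-orbits: there are $2|\Gamma|$ of them, and $\Gamma$ acts without fixed points by Lemma \ref{C stab}. By Theorem \ref{Thm:quadratic model} all of these pairs share a common local structure $(\widehat{L},\cmdeg)\iso(\widehat{\co}_K,\beta\cdot\mathrm{Nm}_{K/F})$, so the orbital integrals $O_\ell(\alpha,\mathbf{E}_1,\mathbf{E}_2)$ depend only on $\alpha$ and $\mathfrak{p}$ and are therefore the same for every pair; consequently Proposition \ref{Prop:unfolding} yields the value $\frac{\mathbf{w}_1\mathbf{w}_2}{2}\prod_\ell O_\ell(\alpha,\mathbf{E}_1,\mathbf{E}_2)$ for each of the two orbits. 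Summing the two orbit-contributions and invoking Theorem \ref{Thm:global orbital}, which identifies $\prod_\ell O_\ell(\alpha,\mathbf{E}_1,\mathbf{E}_2)$ with $\rho(\alpha\mathfrak{D}\mathfrak{p}^{-1})$, I obtain
$$
\sum_{(\mathbf{E}_1,\mathbf{E}_2)} |\{j:\cmdeg(j)=\alpha\}| = 2\cdot\frac{\mathbf{w}_1\mathbf{w}_2}{2}\,\rho(\alpha\mathfrak{D}\mathfrak{p}^{-1}) = \mathbf{w}_1\mathbf{w}_2\,\rho(\alpha\mathfrak{D}\mathfrak{p}^{-1}).
$$
Dividing by $\mathbf{w}_1\mathbf{w}_2$ and substituting back through the previous two displays gives the claimed formula. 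The step requiring the most care, and the main obstacle, is the bookkeeping in the middle paragraph: matching the automorphism weights $e_x^{-1}$ of isomorphism classes of triples against the plain cardinalities counted in Proposition \ref{Prop:unfolding}, and in particular verifying that the two $\Gamma$-orbits of CM pairs contribute equally, so that the factor of $2$ from Proposition \ref{Prop:naked count} exactly cancels the $\frac{1}{2}$ appearing in Proposition \ref{Prop:unfolding}.
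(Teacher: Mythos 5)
Your proposal is correct and follows the paper's own proof essentially step for step: the constant local length $\nu_\mathfrak{p}(\alpha)$ from Theorem \ref{Thm:local length}, the groupoid (orbit--stabilizer) conversion of $\sum_x e_x^{-1}$ into $\tfrac{1}{\mathbf{w}_1\mathbf{w}_2}\sum |\{j\}|$ via Lemma \ref{C stab}, and then Proposition \ref{Prop:unfolding}, Theorem \ref{Thm:global orbital}, and Proposition \ref{Prop:naked count} to evaluate the weighted point count as $\rho(\alpha\mathfrak{D}\mathfrak{p}^{-1})$. The only difference is cosmetic bookkeeping: you sum over the two free $\Gamma$-orbits separately, whereas the paper sums over all of $[\mathcal{X}(\F_p^\alg)]_\mathfrak{p}$ and divides by $2\cdot|\Gamma|$, which is the same cancellation of the factor $2$ against the $\tfrac{1}{2}$ from Proposition \ref{Prop:unfolding}.
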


\begin{proof}
Let $[\mathcal{X}(\F_p^\alg)]_\mathfrak{p}$
denote the subset of $[\mathcal{X}(\F_p^\alg)]$ consisting of isomorphism classes of
supersingular CM pairs with
reflex prime $\mathfrak{p}$.  Combining  Theorem \ref{Thm:local length} with
Lemma \ref{C stab}   results in
\begin{align*}
\deg(\mathcal{X}_\alpha)  & = \log(p) \sum_{x\in [\mathcal{X}_\alpha(\F_p^\alg)] }
e_x^{-1} \cdot\length( \co^\mathrm{sh}_{  \mathcal{X}_\alpha, x  } ) \\
& = \log(p) \cdot \nu_\mathfrak{p}(\alpha)
\sum_{( \mathbf{E}_1,\mathbf{E}_2,j)\in [\mathcal{X}_\alpha(\F_p^\alg)]}
\frac{1}{ | \Aut(\mathbf{E}_1,\mathbf{E}_2,j) |} \\
&=
\log(p) \cdot \nu_\mathfrak{p}(\alpha)
  \sum_{ (\mathbf{E}_1,\mathbf{E}_2) \in [\mathcal{X}(\F_p^\alg)]_\mathfrak{p} }
  \sum_{ \substack{ j \in  L(\mathbf{E}_1,\mathbf{E}_2) \\  \cmdeg(j) = \alpha} }
  \frac{1}{ \mathbf{w}_1\mathbf{w}_2}.
  \end{align*}
 Now applying Proposition \ref{Prop:unfolding} and Theorem \ref{Thm:global orbital} to the
 final expression we find
 \begin{align*}
  \deg(\mathcal{X}_\alpha)
  &=
 \log(p) \cdot \frac{ \nu_\mathfrak{p}(\alpha) }{2\cdot |\Gamma| }
  \sum_{ (\mathbf{E}_1,\mathbf{E}_2) \in [\mathcal{X}(\F_p^\alg)]_\mathfrak{p}  }
 \prod_{\ell < \infty} O_\ell(\alpha,\mathbf{E}_1,\mathbf{E}_2) \\
 &=
 \log(p) \cdot \frac{ \nu_\mathfrak{p}(\alpha) }{2\cdot |\Gamma| }
  \sum_{ (\mathbf{E}_1,\mathbf{E}_2) \in [\mathcal{X}(\F_p^\alg)]_\mathfrak{p}  }
\rho(\alpha\mathfrak{D}\mathfrak{p}^{-1}),
\end{align*}
and applying Proposition \ref{Prop:naked count} to this equality results in
\begin{align*}
 \deg(\mathcal{X}_\alpha)
&=
 \log(p) \cdot  \nu_\mathfrak{p}(\alpha) \cdot \rho(\alpha\mathfrak{D}\mathfrak{p}^{-1})
\end{align*}
as desired.
\end{proof}

If $\mathfrak{b}$ is any fractional $\co_{F}$-ideal and $p$ is a prime that is nonsplit in both $K_1$
and $K_2$ we set
$$
f_p(\mathfrak{b}) = \sum_{\mathfrak{p}} \ord_{\mathfrak{p}}(\mathfrak{b}\mathfrak{p}) \cdot \rho(\mathfrak{b}\mathfrak{p}^{-1})
$$
where the sum is over the primes $\mathfrak{p}$ of $F$ above $p$.  If $p$ is a prime that splits in either
$K_1$ or $K_2$ we set $f_p(\mathfrak{b})=0$. It is clear from the definition that $f_p(\mathfrak{b})=0$
unless $\mathfrak{b}\subset \co_F$.

\begin{Thm} \label{Thm:singular moduli}
For any totally positive $\alpha\in F^\times$ the stack $\mathcal{X}_\alpha$ has Arakelov degree
\begin{equation}\label{sing mod}
\deg(\mathcal{X}_\alpha)  =  \frac{1}{2} \cdot \sum_p f_p(\alpha\mathfrak{D}) \log(p).
\end{equation}
\end{Thm}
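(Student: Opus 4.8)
The plan is to deduce the closed formula \eqref{sing mod} from the value of $\deg(\mathcal X_\alpha)$ already computed in Theorem \ref{Thm:final formula}, matching it against the arithmetic function $f_p$ by an elementary analysis of which primes contribute. The essential input is the description of $\Diff(\alpha)$ in terms of valuations. Because $K/F$ is unramified at every finite prime, each finite prime $\mathfrak q$ of $F$ is either split or inert in $K$, and the local character $\chi_{\mathfrak q}$ is trivial in the split case and the nontrivial unramified quadratic character in the inert case. Hence $\chi_{\mathfrak q}(\alpha\mathfrak D)=\chi_{\mathfrak q}(\varpi)^{\ord_{\mathfrak q}(\alpha\mathfrak D)}$, and I would record that $\mathfrak q\in\Diff(\alpha)$ precisely when $\mathfrak q$ is inert in $K$ and $\ord_{\mathfrak q}(\alpha\mathfrak D)$ is odd; in particular every prime of $\Diff(\alpha)$ is inert in $K$.

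The second ingredient is a vanishing criterion for $\rho$, which follows from the same local computations used in Lemmas \ref{Lem:orbital eval ell} and \ref{Lem:orbital eval p}: writing $\rho(\mathfrak c)=\prod_{\mathfrak r}\rho_{\mathfrak r}(\mathfrak c)$, the factor $\rho_{\mathfrak r}(\mathfrak c)$ vanishes unless $\mathfrak c$ is integral at $\mathfrak r$, and in addition unless $\ord_{\mathfrak r}(\mathfrak c)$ is even whenever $\mathfrak r$ is inert in $K$. Applying this with $\mathfrak c=\alpha\mathfrak D\mathfrak q^{-1}$, I conclude that the summand $\ord_{\mathfrak q}(\alpha\mathfrak D\mathfrak q)\cdot\rho(\alpha\mathfrak D\mathfrak q^{-1})$ of $f_p(\alpha\mathfrak D)$ vanishes as soon as $\Diff(\alpha)$ contains an inert prime $\mathfrak r\neq\mathfrak q$, since then $\ord_{\mathfrak r}(\alpha\mathfrak D\mathfrak q^{-1})=\ord_{\mathfrak r}(\alpha\mathfrak D)$ is odd.

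With these facts the proof divides into two cases. If $|\Diff(\alpha)|>1$, then for every prime $\mathfrak q$ the set $\Diff(\alpha)\smallsetminus\{\mathfrak q\}$ still contains an inert prime, so every summand of every $f_p(\alpha\mathfrak D)$ vanishes and the right side of \eqref{sing mod} is $0$; by Theorem \ref{Thm:local length} the stack $\mathcal X_\alpha$ is empty, so the left side is $0$ as well. If $\Diff(\alpha)=\{\mathfrak p\}$ with $\mathfrak p$ above $p$, the same criterion kills every summand except the one with $\mathfrak q=\mathfrak p$: for any other $\mathfrak q$ (lying over any rational prime) the inert prime $\mathfrak p\in\Diff(\alpha)$ differs from $\mathfrak q$. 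Thus $f_{p'}(\alpha\mathfrak D)=0$ for $p'\neq p$ and $f_p(\alpha\mathfrak D)=\ord_{\mathfrak p}(\alpha\mathfrak D\mathfrak p)\cdot\rho(\alpha\mathfrak D\mathfrak p^{-1})$, so the right side of \eqref{sing mod} equals $\tfrac12\log(p)\,\ord_{\mathfrak p}(\alpha\mathfrak D\mathfrak p)\,\rho(\alpha\mathfrak D\mathfrak p^{-1})$, which is precisely the value of $\deg(\mathcal X_\alpha)$ in Theorem \ref{Thm:final formula}.

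Two points need care, and I expect the consistency of the definition of $f_p$ to be the only genuine obstacle. First, the definition sets $f_p(\mathfrak b)=0$ when $p$ splits in $K_1$ or $K_2$; I must check this is compatible with the case analysis, which it is, because if $\Diff(\alpha)=\{\mathfrak p\}$ with $\mathfrak p\mid p$ then $p$ is automatically nonsplit in both $K_1$ and $K_2$. Indeed, $\mathfrak p$ is inert in $K=F(\sqrt{d_1})=F(\sqrt{d_2})$, whereas if $p$ split in some $K_i$ then $d_i$ would be a square in $\Q_p\subseteq F_{\mathfrak p}$, forcing $\mathfrak p$ to split in $K$. Second, one should note that no separate hypothesis $\alpha\in\mathfrak D^{-1}$ is needed, since when $\alpha\notin\mathfrak D^{-1}$ the integrality requirement in the vanishing criterion already forces both sides of \eqref{sing mod} to vanish, in agreement with Theorem \ref{Thm:final formula}.
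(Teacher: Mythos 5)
Your proposal is correct and takes essentially the same approach as the paper: both reduce \eqref{sing mod} to Theorems \ref{Thm:final formula} and \ref{Thm:local length} via the observation that every $\mathfrak{q}\in\Diff(\alpha)$ is inert in $K$ with $\ord_{\mathfrak{q}}(\alpha\mathfrak{D})$ odd, so that $\rho(\alpha\mathfrak{D}\mathfrak{q}^{-1})$, and hence every summand of $\sum_p f_p(\alpha\mathfrak{D})\log(p)$, vanishes except possibly the term at the unique prime of $\Diff(\alpha)$. Your two additional verifications (that $p$ is automatically nonsplit in both $K_1$ and $K_2$ when $\Diff(\alpha)=\{\mathfrak{p}\}$ with $\mathfrak{p}\mid p$, and that $\alpha\notin\mathfrak{D}^{-1}$ forces both sides to vanish) are sound and merely make explicit what the paper leaves implicit in the definition of $f_p$ and in its appeal to Theorem \ref{Thm:final formula}.
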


\begin{proof}
Suppose  $\mathfrak{q}\in \Diff(\alpha)$.   Then $\mathfrak{q}$ is inert in $K$ and
$\ord_\mathfrak{q}(\alpha\mathfrak{D})$ is odd.  If $\mathfrak{p}$ is any prime of $F$ that
 satisfies $\rho(\alpha\mathfrak{D}\mathfrak{p}^{-1})\not=0$, then
$\ord_{\mathfrak{q}}(\alpha\mathfrak{D}\mathfrak{p}^{-1})$ is even and so $\mathfrak{p}=\mathfrak{q}$.
Thus
$$
\frac{1}{2} \cdot \sum_p f_p(\alpha\mathfrak{D}) \log(p) =
\frac{1}{2} \cdot  \ord_\mathfrak{q}(\alpha\mathfrak{D}) \rho(\alpha\mathfrak{D}\mathfrak{q}^{-1}) \log(q)
$$
where $q\Z=\mathfrak{q}\cap \Z$.   In particular, if $\Diff(\alpha)=\{\mathfrak{q}\}$ then the
desired equality follows from Theorem \ref{Thm:final formula}.

Now suppose  $|\Diff(\alpha)| >1 $. The claim is that both sides of (\ref{sing mod}) are  equal to $0$.
The vanishing of the left hand side is precisely the final claim of Theorem \ref{Thm:local length}.
To see that the right hand side vanishes,  suppose  $\mathfrak{r},\mathfrak{q} \in \Diff(\alpha)$ with
$\mathfrak{r}\not=\mathfrak{q}$.   The hypothesis  $\mathfrak{r}\in \Diff(\alpha)$
implies that $\ord_\mathfrak{r}(\alpha\mathfrak{D})$ is odd.  Therefore
$\ord_\mathfrak{r}(\alpha\mathfrak{D} \mathfrak{q}^{-1} )$ is also odd  and so
$\rho(\alpha\mathfrak{D}\mathfrak{q}^{-1})=0$.   The calculation of the previous paragraph
now shows that the right hand side of (\ref{sing mod}) is equal to $0$.
\end{proof}

\begin{Cor}[Gross-Zagier]\label{Cor:GZ}
 For every $m\in \Q^\times$
$$
\deg(\mathcal{T}_m)  =  \frac{1}{2}    \sum_{
\substack{\alpha\in \mathfrak{D}^{-1} \\
\mathrm{Tr}_{F/\Q}(\alpha) = m \\ \alpha \gg 0 } }  \sum_p
f_p(\alpha\mathfrak{D})  \log(p) .
$$
\end{Cor}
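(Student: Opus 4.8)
The plan is to obtain this corollary by feeding the closed formula of Theorem \ref{Thm:singular moduli} into the disjoint-union decomposition (\ref{moduli decomp}). First I would record that the Arakelov degree (\ref{arakelov degree}) is additive over the decomposition $\mathcal{T}_m = \bigsqcup_{\mathrm{Tr}_{F/\Q}(\alpha)=m} \mathcal{X}_\alpha$: each $\mathcal{X}_\alpha$ is an open and closed substack of $\mathcal{T}_m$, so every geometric point of $\mathcal{T}_m$ lies on exactly one component $\mathcal{X}_\alpha$, and its automorphism group and strictly Henselian local ring are the same whether computed in $\mathcal{T}_m$ or in $\mathcal{X}_\alpha$. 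Summing (\ref{arakelov degree}) over the components then gives
$$
\deg(\mathcal{T}_m) = \sum_{\substack{\alpha\in F \\ \mathrm{Tr}_{F/\Q}(\alpha)=m}} \deg(\mathcal{X}_\alpha).
$$

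Next I would discard the vanishing terms. By Corollary \ref{Cor 2.2.4} the stack $\mathcal{X}_\alpha$ is empty, hence $\deg(\mathcal{X}_\alpha)=0$, unless $\alpha$ is totally positive; so the summation may be restricted to $\alpha\gg 0$. For each such $\alpha$ Theorem \ref{Thm:singular moduli} supplies $\deg(\mathcal{X}_\alpha) = \frac12\sum_p f_p(\alpha\mathfrak{D})\log(p)$, whence
$$
\deg(\mathcal{T}_m) = \frac12 \sum_{\substack{\alpha\in F,\ \alpha\gg 0 \\ \mathrm{Tr}_{F/\Q}(\alpha)=m}} \sum_p f_p(\alpha\mathfrak{D})\log(p).
$$
Finally, to match the stated index set I would restrict the outer sum to $\alpha\in\mathfrak{D}^{-1}$, which is legitimate because the remark following Theorem \ref{Thm:final formula} gives $f_p(\mathfrak{b})=0$ unless $\mathfrak{b}\subset\co_F$; thus $f_p(\alpha\mathfrak{D})$ already vanishes for $\alpha\notin\mathfrak{D}^{-1}$, and restricting the range of summation changes nothing. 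This yields exactly the asserted formula.

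The one point deserving care --- and the step I would flag as the main, if modest, obstacle --- is the finiteness needed to make the additivity and the rearrangement of sums rigorous. Each $\deg(\mathcal{X}_\alpha)$ is a finite quantity because $[\mathcal{X}_\alpha(\F_p^\alg)]$ is finite (Corollary \ref{Cor 2.2.4} confines the points to a single supersingular characteristic, and there are finitely many supersingular CM pairs). Moreover only finitely many $\alpha$ contribute to $\deg(\mathcal{T}_m)$, since a totally positive $\alpha\in\mathfrak{D}^{-1}$ with $\mathrm{Tr}_{F/\Q}(\alpha)=m$ has both real embeddings bounded between $0$ and $m$, and hence lies in a bounded region of the lattice $\mathfrak{D}^{-1}$. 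With finiteness secured, interchanging the summations over $\alpha$ and over $p$ is harmless, and the three displayed identities combine to prove the corollary.
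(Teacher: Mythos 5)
Your proposal is correct and is exactly the paper's argument: the paper proves this corollary in one line by combining Theorem \ref{Thm:singular moduli} with the decomposition (\ref{moduli decomp}), and your write-up simply makes explicit the routine points (additivity of the Arakelov degree over the open-closed decomposition, vanishing for $\alpha$ not totally positive or $\alpha\notin\mathfrak{D}^{-1}$, and finiteness of the index set) that the paper leaves implicit.
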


\begin{proof}
This is immediate from Theorem \ref{Thm:singular moduli} and the decomposition (\ref{moduli decomp}).
\end{proof}


\section{Eisenstein Series}\label{sect3}


 Let $\psi_{\mathbb Q} =\prod_{p} \psi_{\mathbb Q_p}$ be the unique
 unramified additive character $\mathbb Q \backslash \A\map{}\C^\times$ satisfying $\psi_{\mathbb R}(x)
 =e(x) =e^{2 \pi i x}$, and let $\psi_F = \psi_{\mathbb Q} \circ
 \tr_{F/\mathbb Q}$.  Denote by  $\sigma_1$ and $\sigma_2$  the two real
 embeddings of $F$,  let $\chi$ be the quadratic Hecke character of
 $F$ associated to $K$, and  let $\sqrt D \in F$ be a fixed square root of $D$, so that $\sqrt D\co_F =\mathfrak D$.

\subsection{Incoherent quadratic spaces and Hecke's Eisenstein series}

  Let $W=K$ viewed as an $F$-quadratic space  with the quadratic form
  $Q(x) =\frac{1}{\sqrt D} x \bar x$.   Define a collection $\{\mathcal{C}_v\}$ of $F_v$-quadratic spaces, one for
  each place $v$ of $F$, by taking $\mathcal C_{\mathfrak p} =W_{\mathfrak p}$ for every
  finite prime $\mathfrak p$ of $F$, and taking $\mathcal C_{\sigma_i}$ to be of
  signature $(2, 0)$ for the two infinite primes $\sigma_1$ and  $\sigma_2$ of
  $F$.   The product $\mathcal{C}=\prod_v\mathcal{C}_v$ is then an \emph{incoherent} quadratic space
  over $\A_{F}$ in the sense of \cite[Definition 2.1]{KuAnnals}.  For each place $v$ of $F$ let $\Sch(\mathcal{C}_v)$ be the space of
  Schwartz functions on $\mathcal{C}_v$, and let $\Sch(\mathcal{C}) = \otimes_v \Sch(\mathcal{C}_v)$ be the space of
  Schwartz functions on $\mathcal{C}$.  By means of the Weil representation $\omega=\omega_{\mathcal C,
  \psi_F}$ (see for example \cite{KuSplit}), one has an $\SL_2(\A_F)$-equivariant map
  $$
  \lambda:   \Sch(\mathcal C)  \rightarrow
  I(0, \chi), \quad \lambda(\phi)(g) =\omega(g)\phi(0).
  $$
  Here $I(s, \chi)=\hbox{Ind}_{B(\A_F)}^{\SL_2(\A_F)} (\chi)$ is the induced
  representation of $\SL_2(\A_F)$ consisting of smooth functions
  $\Phi(g, s)$ on $\SL_2(\A_F)$ satisfying
$$
\Phi(n(b) m(a) g, s) =\chi(a) |a|_{\A}^{s+1} \Phi(g, s),  \quad b
\in \A_F, a\in \A_F^\times,
$$
and
$$
 B=N M=\{ n(b) m(a) =\kzxz {1} {b} {0} {1} \kzxz  {a} {0} {0}
 {a^{-1}}:\, b \in F, a \in F^\times\}
 $$
viewed as an algebraic group over $F$.  We say $\Phi \in I(s, \chi)$ is \emph{standard} if $\Phi(g, s)$ is
independent of $s$ for $g$ in the maximal compact subgroup
$\SL_2(\widehat{\co}_F) \times \SO_2(F_\infty)$. We say $\Phi$ is
factorizable if $\Phi =\otimes  \Phi_v$ is the product of local
sections.  For a factorizable standard section $\Phi \in I(s,\chi)$ the Eisenstein series
 $$
 E(g, s, \Phi) = \sum_{\gamma \in B(F) \backslash \SL_2(F)}
 \Phi(\gamma g, s)
 $$
 is absolutely convergent when $\mathrm{Re } ( s) $ is sufficiently large,  and has meromorphic
 continuation with a functional equation in $s \mapsto  -s$.
 Moreover, it is holomorphic along the unitary axis $\mathrm{Re }( s) =0$.

\begin{Rem} Following \cite[Definition 5.1, (5.4)]{KuAnnals}, we denote by $\Diff(\mathcal C, \alpha)$ the
set of places $v$ of $F$ at which  $\mathcal C_v$ does not represent $\alpha$. Then
$\Diff(\mathcal C, \alpha) =\Diff(\alpha)$ for every totally positive $\alpha \in F$, where $\Diff(\alpha)$
is the set defined in the introduction.
\end{Rem}

\begin{Rem}
The incoherent  quadratic space $\mathcal C$ is closely related to the quadratic spaces studied in Section  \ref{sect2}.
Let $p$ be a prime that is nonsplit in $K_1$ and $K_2$, let $(\mathbf E_1, \mathbf E_2)$ be a  supersingular CM pair  over
$\mathbb F_p^\alg$, and let $\mathfrak p$ be the reflex prime of $(\mathbf E_1, \mathbf E_2)$
in the sense of Definition \ref{Def:reflex prime}.  By Theorem \ref{Thm:quadratic model},  for every place $v$ of $F$
$$
\big( V(\mathbf E_1,\mathbf E_2)\otimes_F F_v , \cmdeg\big)  \iso \mathcal C_v  \iff v\not=\mathfrak{p}
$$
 (see also Lemma \ref{lem3.1.3} below).
\end{Rem}

 For $\phi \in \Sch(\mathcal C)$ let $\Phi_\phi \in I(s, \chi)$ be the
 standard section  associated to $\lambda(\phi)$, characterized by
 $\Phi_\phi(g, 0) =\lambda(\phi)$, and abbreviate
 $$
 E(g, s, \phi) =E(g, s, \Phi_\phi).
 $$

 We now choose a particular $\phi_v^+\in \Sch(\mathcal{C}_v)$ for every place $v$ of $F$:
 for a finite prime $\mathfrak p$  set   $\phi_{\mathfrak p}^+ =\mathbf{1}_{\co_{K, \mathfrak p}}$, and for $l\in \{1,2\}$ set
$\phi_{\sigma_l}^+ = e^{- 2 \pi Q_{\sigma_l}(x)}$, where $Q_{\sigma_l}$ is the quadratic form on 
$\mathcal C_{\sigma_l}$. We  define  $\phi^{\mathcal C}  =\otimes_{v} \phi_v^+ \in \Sch(\mathcal C)$ and let 
$$\Phi^\mathcal C =\otimes_v \Phi_v^+ \in I(s, \chi)$$ be
the associated standard section, where  $\Phi_v^+$ is  the standard section associated to $\phi_v^+$. The
following is well-known.

\begin{Lem}  \label{lem3.0.1}\

\begin{enumerate}
\item
For all $s$ and $l\in \{1,2\}$,    $\Phi_{\sigma_l}^+(g,s)$ is the
normalized eigenfunction of $\SO_2(F_{\sigma_l})$ of
weight $1$,  i.e. $\Phi_{\sigma_i}^+(1,s)=1$ and
$$
\Phi_{\sigma_i}^+(g k_\theta,s) = \Phi_{\sigma_i}^+(g,s) \cdot e^{i \theta},
\quad k_\theta=\kzxz {\cos \theta} {\sin \theta} {-\sin \theta}
{\cos \theta} .
$$

\item
 For all $s$ and all finite primes $\mathfrak p$, $\Phi_\mathfrak p^+(g,s)$ is the spherical section in $I(s,
\chi_\mathfrak p)$, i.e. $\Phi_\mathfrak p^+(1,s) =1$ and
$$
\Phi_{\mathfrak p}^+(g k,s) =\Phi_{\mathfrak p}^+(g,s),  \quad k \in \SL_2(\co_{F_\mathfrak p}).
$$
\end{enumerate}
\end{Lem}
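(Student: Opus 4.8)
The plan is to reduce both statements to the explicit action of the Weil representation $\omega$ on the chosen Schwartz functions $\phi_v^+$. Recall that $\Phi_v^+$ is by construction the \emph{standard} section with $\Phi_v^+(g,0)=\lambda(\phi_v^+)(g)=\omega(g)\phi_v^+(0)$; since a standard section is determined for all $s$ by its restriction to the maximal compact at $s=0$, the transformation laws under the compact subgroups $\SO_2(F_{\sigma_l})$ and $\SL_2(\co_{F_\mathfrak p})$ need only be checked at $s=0$, where they become statements about $\omega$. Concretely I would use the formulas for $\omega$ on the generators $n(b)=\kzxz{1}{b}{0}{1}$, $m(a)=\kzxz{a}{0}{0}{a^{-1}}$, and $w=\kzxz{0}{1}{-1}{0}$: namely $\omega(n(b))\phi(x)=\psi_F(bQ(x))\phi(x)$, $\omega(m(a))\phi(x)=\chi(a)|a|^{n/2}\phi(ax)$, and $\omega(w)\phi=\gamma\,\hat\phi$, where $n=\dim_F W=2$ and $\gamma$ is the local Weil index. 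Since $n$ is even the representation descends to $\SL_2$, which is why no metaplectic cover appears.

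For part (1), at an archimedean place $\mathcal C_{\sigma_l}$ is positive definite of rank $2$, and I would show that the Gaussian $\phi_{\sigma_l}^+(x)=e^{-2\pi Q_{\sigma_l}(x)}$ is an eigenvector for $\omega(k_\theta)$ with eigenvalue $e^{i\theta}$. This is the classical fact that the Gaussian attached to a positive-definite rank-$n$ form is the lowest weight vector of weight $n/2$, here equal to $1$. Granting this, $\Phi_{\sigma_l}^+(gk_\theta,0)=\omega(g)\omega(k_\theta)\phi_{\sigma_l}^+(0)=e^{i\theta}\,\Phi_{\sigma_l}^+(g,0)$, and the normalization $\Phi_{\sigma_l}^+(1,0)=\phi_{\sigma_l}^+(0)=1$ is immediate; propagating to all $s$ via the Iwasawa decomposition and standardness yields the stated eigenfunction property and $\Phi_{\sigma_l}^+(1,s)=1$.

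For part (2), at a finite prime $\mathfrak p$ the point is that $\omega(k)\mathbf 1_{\co_{K,\mathfrak p}}=\mathbf 1_{\co_{K,\mathfrak p}}$ for every $k\in\SL_2(\co_{F_\mathfrak p})$, which I would verify on the three generators. For $n(b)$ with $b\in\co_{F,\mathfrak p}$ one uses that $Q(\co_{K,\mathfrak p})\subset\mathfrak D^{-1}$ (since $x\bar x\in\co_{F,\mathfrak p}$ and $1/\sqrt D$ generates $\mathfrak D^{-1}$) together with the fact that $\psi_{F,\mathfrak p}$ has conductor $\mathfrak D^{-1}$, so $\psi_F(bQ(x))=1$ on $\co_{K,\mathfrak p}$. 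For $m(a)$ with $a\in\co_{F,\mathfrak p}^\times$ one uses $|a|_\mathfrak p=1$ and that $\chi$ is unramified at $\mathfrak p$ (which holds because $\gcd(d_1,d_2)=1$ forces $K/F$ unramified), so $\chi(a)|a|_\mathfrak p=1$ and $ax$ ranges over $\co_{K,\mathfrak p}$. For $w$ one computes the Fourier transform: because $K/F$ is unramified and $1/\sqrt D$ generates $\mathfrak D^{-1}$, the lattice $\co_{K,\mathfrak p}$ is self-dual with respect to the pairing $(x,y)\mapsto\psi_{F,\mathfrak p}(\frac{1}{\sqrt D}\tr_{K/F}(x\bar{y}))$, exactly as in the proof of Lemma \ref{Lem:CM model}, so $\widehat{\mathbf 1_{\co_{K,\mathfrak p}}}=\mathbf 1_{\co_{K,\mathfrak p}}$ for self-dual measure. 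Then $\Phi_\mathfrak p^+(1,0)=\mathbf 1_{\co_{K,\mathfrak p}}(0)=1$, and right $\SL_2(\co_{F_\mathfrak p})$-invariance propagates to all $s$ as in part (1), identifying $\Phi_\mathfrak p^+$ with the normalized spherical section.

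The only genuinely non-formal input is the value of the Weil index $\gamma$ in $\omega(w)$ at the finite primes: the self-duality computation produces $\mathbf 1_{\co_{K,\mathfrak p}}$ only up to the constant $\gamma$, and one must know $\gamma=1$. This is the statement that an even-dimensional, unramified (self-dual) quadratic space has trivial local Weil index, which I expect to be the main point to pin down carefully; everything else is bookkeeping with the explicit Weil representation formulas.
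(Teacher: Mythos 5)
Your proposal is correct, but it takes a genuinely more self-contained route than the paper, whose entire proof consists of two citations plus a twisting trick: part (1) is quoted directly from \cite[Lemma 1.2]{Shi}, and for part (2) the paper sets $\psi_{\mathfrak p}'(x)=\psi_{\mathfrak p}(\tfrac{1}{\sqrt D}x)$, notes that $\psi_{\mathfrak p}'$ is unramified and that $\omega_{V_\mathfrak p,\psi_\mathfrak p}=\omega_{V'_\mathfrak p,\psi'_\mathfrak p}$ with $Q'(x)=x\bar x$, and then invokes \cite[Proposition 2.1]{YaValue} for the resulting unramified data. Your generator-by-generator verification is exactly the content of those cited results, unwound: your conductor computation (that $Q(\co_{K,\mathfrak p})\subset\mathfrak D^{-1}_\mathfrak p$ pairs trivially against $\psi_{F,\mathfrak p}$ of conductor $\mathfrak D^{-1}_\mathfrak p$) and your self-duality of $\co_{K,\mathfrak p}$ under the twisted trace pairing are precisely the cancellation that the paper packages as the substitution $\psi\mapsto\psi'$, and your observation that standardness lets one check the compact-group transformation laws at $s=0$ and propagate via the Iwasawa decomposition is implicitly used in the paper as well. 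You also correctly isolate the two non-formal inputs that the paper delegates to the literature: the weight-one eigenproperty of the rank-two Gaussian under $\omega(k_\theta)$ (this is what \cite{Shi} supplies) and the triviality of the normalized Weil index $\gamma$ in $\omega(w)\phi=\gamma\widehat{\phi}$ for the self-dual binary situation; the latter claim is true, since after the twist the data is unramified and $\gamma$ reduces to the local epsilon/lambda factor of the unramified quadratic character $\chi_\mathfrak p$ with respect to an unramified additive character, which is $1$ (and this holds at split and dyadic places too, where your self-duality argument via the perfect trace pairing for the unramified \'etale algebra $K_\mathfrak p/F_\mathfrak p$ goes through unchanged). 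What your approach buys is a proof readable without consulting \cite{Shi} and \cite{YaValue}, at the cost of having to pin down $\gamma=1$ carefully yourself, which you rightly flag as the one delicate point; what the paper's approach buys is brevity, by reducing to a standard unramified computation in which that Weil-index normalization has already been absorbed.
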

\begin{proof}  Claim (1) follows from \cite[Lemma 1.2]{Shi}. For (2), let 
$\psi_{\mathfrak p}'(x) = \psi_{\mathfrak p}(\frac{1}{\sqrt D} x)$
so that  $\psi_{\mathfrak p}'$ is unramified. Let $V_\mathfrak p'=K_\mathfrak p$ with $Q'(x) =x \bar x$ for 
$x \in K_\mathfrak p$. Then
$$
\omega_{V_\mathfrak p, \psi_\mathfrak p} = \omega_{V_\mathfrak p', \psi_\mathfrak p'}.
$$
Now (2) follows from \cite[Proposition 2.1]{YaValue}.
\end{proof}

Let $\mathbb{H}$ be the complex upper half-plane. For $\tau=(\tau_1, \tau_2) \in \mathbb H^2$ write
$\tau_l=u_l +i v_l$,  set
$$
g_{\tau_l}=n(u_l)\cdot  m(\sqrt{v_l})\in \SL_2(\R),
$$
and view   $g_\tau =(g_{\tau_1}, g_{\tau_2})$ as an element of
$$
\SL_2(F_{\sigma_1}) \times \SL_2(F_{\sigma_2})\subset\SL_2(\A_F).
$$
 Let
\begin{equation} \label{eq: Eisenstein}
E^*(\tau, s, \phi^{\mathcal C}) =\Lambda(s+1, \chi) E(\tau, s, \phi^{\mathcal C}) =\Lambda(s+1, \chi)(v_1
v_2)^{-\frac{1}2} E(g_\tau, s, \Phi^{\mathcal C})
\end{equation}
be the normalized Eisenstein series of weight $1$, where
$$
\Lambda(s, \chi) = D^{\frac{s}2} \Gamma_{\mathbb R}(s+1)^2
 L(s, \chi) =\Lambda(s,
\chi_1) \Lambda(s, \chi_2) .
$$
Here $\chi_i$ is the quadratic Dirichlet  character associated to $K_i$,
$$
\Gamma_{\mathbb R}(s)= \pi^{-\frac{s}2} \Gamma\left(\frac{s}2\right),
$$
and
$$
\Lambda(s, \chi_i) = |d_i |^{\frac{s}2} \Gamma_{\mathbb R}(s+1) L(s, \chi_i)
$$
is the completed Dirichlet $L$-function of $\chi_i$.
This Eisenstein series  is, up to scalar normalization, Hecke's famous Eisenstein series.

\begin{Prop} \label{Prop:Eisenstein} For $\tau =(\tau_1, \tau_2) \in \mathbb H^2$,
\begin{align*}
E^*(\tau, s, \phi^{\mathcal C}) &= E^*(\tau, s)
 \end{align*}
 where $E^*(\tau, s) =E^*(\tau_1, \tau_2, s)$ is the normalized Hecke Eisenstein series defined in the introduction.
\end{Prop}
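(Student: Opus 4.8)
The plan is to unfold the adelic Eisenstein series $E(g_\tau,s,\Phi^{\mathcal C})$ and to evaluate the factorizable standard section $\Phi^{\mathcal C}=\otimes_v\Phi_v^+$ one place at a time, matching the outcome against Hecke's series term by term. First I would parametrize the coset space $B(F)\backslash\SL_2(F)$ by bottom rows: since left multiplication by $n(b)m(a)$ scales the bottom row $(c,d)$ by $a^{-1}$, the assignment $\gamma\mapsto[(c,d)]$ identifies $B(F)\backslash\SL_2(F)\iso\mathbb P^1(F)$, the set of lines $\ell\subset F^2$. The intersection $\ell\cap\co_F^2$ is a rank-one $\co_F$-module with a well-defined ideal class $[\mathfrak a]$, and grouping the lines by this class produces the outer sum over $[\mathfrak a]\in\CL(F)$ in Hecke's expansion, while the lines within a fixed class give, after the completion discussed below, the inner sum over $(m,n)\in\mathfrak a^2/\co_F^\times$.

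Next I would compute each local factor $\Phi_v^+(\gamma g_\tau,s)$ through the Iwasawa decomposition $\gamma g_\tau=n\,m(a)k$, using the transformation law $\Phi_v^+(n\,m(a)k,s)=\chi_v(a)|a|_v^{s+1}\Phi_v^+(k,s)$. At a finite prime $\mathfrak p$ the element $g_\tau$ is trivial, and since $\Phi_{\mathfrak p}^+$ is the spherical section normalized by $\Phi_{\mathfrak p}^+(1,s)=1$ (Lemma \ref{lem3.0.1}(2)), the contribution is $\chi_{\mathfrak p}(a_{\mathfrak p})|a_{\mathfrak p}|_{\mathfrak p}^{s+1}$ with $a_{\mathfrak p}$ read off from $(c,d)$; taking the product over all finite primes collapses to $\chi(\mathfrak a)\norm(\mathfrak a)^{1+s}$. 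At each real place $\sigma_l$ the section $\Phi_{\sigma_l}^+$ is the normalized weight-one eigenfunction (Lemma \ref{lem3.0.1}(1)), so the Iwasawa decomposition of $\gamma g_{\tau_l}$ produces the archimedean factor $v_l^{(s+1)/2}(\sigma_l(m)\tau_l+\sigma_l(n))^{-1}|\sigma_l(m)\tau_l+\sigma_l(n)|^{-s}$, where the holomorphic automorphy factor arises from the phase $e^{i\theta}$ dictated by weight one. Multiplying the two archimedean factors reconstructs precisely $m(\tau_1,\tau_2)+n$ and its absolute value.

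Assembling the local contributions expresses $E(g_\tau,s,\phi^{\mathcal C})$ as a constant times Hecke's double sum: the product $\prod_l v_l^{(s+1)/2}=(v_1v_2)^{(s+1)/2}$ cancels against the factor $(v_1v_2)^{-1/2}$ in \eqref{eq: Eisenstein}, leaving the $(v_1v_2)^{s/2}$ of the introduction. It then remains to reconcile the normalizations. Writing $\Lambda(s+1,\chi)=D^{(s+1)/2}\,\Gamma_{\mathbb R}(s+2)^2\,L(s+1,\chi)$, the factor $D^{(s+1)/2}$ together with $\Gamma_{\mathbb R}(s+2)^2=(\pi^{-(s+2)/2}\Gamma((s+2)/2))^2$ reproduces exactly Hecke's prefactor, while $L(s+1,\chi)$ is precisely the factor that completes the projective sum over $\mathbb P^1(F)$ produced by unfolding into the full sum over $\mathfrak a^2/\co_F^\times$ in the introduction. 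I expect the main obstacle to lie in this bookkeeping rather than in any analytic subtlety: one must verify that the ideal-class decomposition of $\mathbb P^1(F)$ together with the spherical computation at the finite places reproduces the Hecke twist $\chi(\mathfrak a)\norm(\mathfrak a)^{1+s}$ with the correct ideal, and that the passage from primitive to arbitrary $(m,n)$ accounts exactly for $L(s+1,\chi)$.
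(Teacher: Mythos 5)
Your proposal is correct and follows essentially the same route as the paper's own proof: unfolding over $B(F)\backslash \SL_2(F)$ organized by ideal classes (your $\mathbb{P}^1(F)$ parametrization by bottom rows is just a repackaging of the paper's double-coset bijection $B(F)\backslash \SL_2(F)/\SL_2(\co_F)\rightarrow \CL(F)$ and the subsequent coset parametrization by primitive pairs $(c,d)$ with $I(c,d)=\mathfrak{a}$ modulo $\co_F^\times$), the same local Iwasawa computations via Lemma \ref{lem3.0.1}, and the same final observation that $L(s+1,\chi)$ exactly accounts for the passage from primitive pairs to the full sum over $\mathfrak{a}^2/\co_F^\times$. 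The normalization bookkeeping you flag as the main obstacle is precisely what the paper carries out, so nothing further is needed.
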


\begin{proof}
 Let $I(F)$ be the group of fractional ideals of $F$, and for any nonzero vector $(c,d)\in F^2$ define
$I(c,d) = \co_F c+\co_F d \in I(F)$. Given $\gamma\in \SL_2(F)$ write
\begin{equation}\label{gamma}
\gamma= \left(\begin{matrix} *  & * \\ c & d  \end{matrix}\right)
\end{equation}
and define  $f(\gamma) =I(c,d)$.  Then \cite[Section 1.3]{garrett}  implies that  
$f:\SL_2(F)\map{}I(F)$  induces a bijection
\begin{equation}\label{cusps}
 B(F) \backslash \SL_2(F) /\SL_2(\co_F) \rightarrow \CL(F).
\end{equation}
For each ideal $\mathfrak a \in I(F)$ fix  a $g_\mathfrak a \in \SL_2(F)$ satisfying
$f(g_\mathfrak a) =\mathfrak a$.   The bijectivity of (\ref{cusps}) implies that the function
$$
g_\mathfrak a \SL_2(\co_F) \map{} \{ (c, d) \in \mathfrak a^2:\, I(c, d) =\mathfrak a\}
$$
defined by  $\gamma\mapsto (c, d)$ induces a bijection
\begin{equation*}
B(F)  \backslash B(F) g_\mathfrak a \SL_2(\co_F) \rightarrow
  \{ (c, d) \in \mathfrak a^2:\, I(c, d) =\mathfrak a\}/\co_F^\times.
\end{equation*}

Now we are ready to prove the proposition using the usual unfolding technique. Recall from  (\ref{eq: Eisenstein})
$$
E(\tau, s, \phi^\mathcal C) =(v_1 v_2)^{-\frac{1}2}
 \sum_{\gamma \in B(F) \backslash \SL_2(F)} \Phi^\mathcal C(\gamma g_\tau, s).
$$
For $\gamma \in \SL_2(F)$ written as (\ref{gamma}), let $\widehat\gamma$  denote the  image of $\gamma$ in $\SL_2(\widehat F)$,
and let $\gamma_l =\sigma_l(\gamma)$ be the image of $\gamma$ in $\SL_2(F_{\sigma_l}) =\SL_2(\mathbb R)$.
If we  factor
$$
\widehat{\gamma} =n(b) m(a) k
$$
with $b \in \widehat F$,  $a \in \widehat F^\times$,  and $k \in \SL_2(\widehat{\co}_F)$
then $f(\gamma)$ is the ideal of $F$ associated to $a^{-1}$, and
$$
\widehat{\Phi}^\mathcal C(\widehat{\gamma}, s) = \chi(a) |a|^{s+1} = \chi(f(\gamma)) \norm(f(\gamma))^{s+1}
$$
by Lemma \ref{lem3.0.1}. Next, given $\tau_l=u_l+iv_l\in \mathbb{H}$ there is a $\tau_l' =u'_l+iv'_l\in\mathbb{H}$
such that
$$
\gamma_l g_{\tau_l} = g_{\tau_l'} k_{\theta_l},  \quad  k_{\theta_l} = \kzxz {\cos \theta_l} {\sin \theta_l} {-\sin \theta_l} {\cos \theta_l}.
$$
Writing $c_l =\sigma_l(c)$ and $d_l =\sigma_l(d)$ we have
$$
v_l' = \frac{v_l}{|c_l \tau_l + d_l|^2} \qquad e^{i \theta_l} = \frac{|c_l \tau_l + d_l|}{c_l \tau_l + d_l},
$$
so Lemma \ref{lem3.0.1} implies
$$
\Phi_{\sigma_l}^+(\gamma_l g_{\tau_l}, s) = (v_l')^{\frac{s+1}2} e^{i \theta_l}
   = \frac{v_l^{\frac{s+1}2}}{(c_l \tau_l +d_l) |c_l \tau_l + d_l|^s}.
$$
It now follows that
\begin{align*}
E(\tau, s, \phi^\mathcal C)
 &= \sum_{[\mathfrak a] \in \CL(F)} \sum_{\gamma \in B(F) \backslash  B(F) g_\mathfrak a \SL_2(\co_F)}(v_1 v_2)^{-\frac{1}2} \Phi^\mathcal C(\gamma g_\tau, s)
 \\
 &= \sum_{[\mathfrak a] \in \CL(F)} \chi(\mathfrak a) \norm(\mathfrak a)^{s+1}
  \sum_{\gamma \in B(F) \backslash  B(F) g_\mathfrak a \SL_2(\co_F)}
   \frac{(v_1 v_2)^{\frac{s}2}}{(c \tau +d) |c \tau +d|^s}
   \\
   &= \sum_{[\mathfrak a] \in \CL(F)} \chi(\mathfrak a) \norm(\mathfrak a)^{s+1}
    \sum_{\substack{(c, d) \in \mathfrak a^2/\co_F^\times \\ I(c, d) =\mathfrak a}}
     \frac{(v_1 v_2)^{\frac{s}2}}{(c \tau +d) |c \tau +d|^s}.
\end{align*}
On the other hand, for any $0 \ne (c, d) \in \mathfrak a^2$,  one has $0\ne I(c, d) \subset \mathfrak a$ and so
\begin{align*}
& \sum_{[\mathfrak a] \in \CL(F)} \chi(\mathfrak a) \norm(\mathfrak a)^{s+1}
  \sum_{0 \ne (c, d) \in \mathfrak a^2/\co_F^\times}  \frac{(v_1 v_2)^{\frac{s}2}}{(c \tau +d) |c \tau +d|^s}
  \\
   &= \sum_{[\mathfrak a] \in \CL(F)}
     \sum_{ \substack{ \mathfrak b  \subset \mathfrak a \\ \mathfrak{b}\not=0}}\chi(\mathfrak a) \norm(\mathfrak a)^{s+1}
     \sum_{\substack{(c, d) \in \mathfrak b^2/\co_F^\times \\ I(c, d) =\mathfrak b}}
     \frac{(v_1 v_2)^{\frac{s}2}}{(c \tau +d) |c \tau +d|^s}
     \\
     &= \sum_{[\mathfrak b] \in \CL(F)}  \chi(\mathfrak b) \norm(\mathfrak b)^{s+1}
       \sum_{\substack{(c, d) \in \mathfrak b^2/\co_F^\times \\ I(c, d) =\mathfrak b}}
     \frac{(v_1 v_2)^{\frac{s}2}}{(c \tau +d) |c \tau +d|^s}
      \sum_{\mathfrak c \subset \co_F} \chi(\mathfrak c) \norm(\mathfrak c)^{-s-1}.
\end{align*}
Therefore
$$
\sum_{[\mathfrak a] \in \CL(F)} \chi(\mathfrak a) \norm(\mathfrak a)^{s+1}
  \sum_{0 \ne (c, d) \in \mathfrak a^2/\co_F^\times}  \frac{(v_1 v_2)^{\frac{s}2}}{(c \tau +d) |c \tau +d|^s}
   =L(s, \chi) E(\tau, s, \phi^{\mathcal C})
$$
and the proposition is clear.
\end{proof}

We need some more notation. One has the Fourier expansion:
$$
E^*(\tau, s, \phi^{\mathcal C}) =\sum_{\alpha \in F}
E_\alpha^*(\tau, s, \phi^{\mathcal C}),
$$
where for $\alpha \ne 0$
$$
E_\alpha^*(\tau, s, \phi^\mathcal C)
 = \prod_{\mathfrak p< \infty} W_{\alpha, \mathfrak p}^*(1, s, \phi_\mathfrak p^+) \prod_{l=1}^2
 W_{\alpha, \sigma_l}^*(\tau_l, s, \phi_{\sigma_l}^+),
 $$
 and
 $$
 E_0^*(\tau, s, \phi^{\mathcal C}) =\Lambda(s+1, \chi) (v_1 v_2)^{\frac{s}2} +
  \prod_{\mathfrak p< \infty} W_{0, \mathfrak p}^*(1, s, \phi_\mathfrak p^+) \prod_{l=1}^2
 W_{0, \sigma_l}^*(\tau_l, s, \phi_{\sigma_l}^+).
 $$
 Here (for all $\alpha \in F$)
\begin{align*}
W_{\alpha, \mathfrak p}^*(1, s, \phi_\mathfrak p^+)& = L(s+1, \chi_\mathfrak p)
|D|_\mathfrak p^{-\frac{s+1}2}W_{\alpha, \mathfrak p}(1, s, \phi_\mathfrak p^+)
\\
&=L(s+1, \chi_\mathfrak p)
|D|_\mathfrak p^{-\frac{s+1}2} \int_{F_\mathfrak p} \Phi_\mathfrak p^+(w n(b), s)
\psi_{\mathfrak p}(-\alpha_\mathfrak p b) \, db ,
\end{align*}
 where $db$ is the Haar measure on $F_\mathfrak p$ self-dual with respect to $\psi_{\mathfrak p}$, $w=\kzxz {0} {-1} {1} {0}$,
and
$$
W_{\alpha, \sigma_l}^*(\tau_l, s, \phi_{\sigma_l}^+)
 = v_l^{-\frac{1}2} \Gamma_{\mathbb R}(s+2)
 W_{\alpha, \sigma_l}(g_{\tau_l}, s, \phi_{\sigma_l}^+)
 $$
with
$$
W_{\alpha, \sigma_l}(g_{\tau_l}, s, \phi_{\sigma_l}^+)=\int_{\mathbb R} \Phi_{\sigma_l}^+(w n(b) g_{\tau_l}, s) \psi_{\sigma_l}(- \alpha_l b)\, db.
$$

\subsection{Explicit Calculations} \label{sect:explicit}

We  now record the results of \cite[Proposition 2.1]{YaValue} and   \cite[Proposition 2.2]{YaValue} for the convenience of the
reader.

\begin{Prop} \label{prop3.3}   Let $\mathfrak p$ be  a finite prime  of $F$.
\begin{enumerate}
\item
For all $\alpha \in F$
$$
W_{\alpha, \mathfrak p}^*(1, s, \phi_{\mathfrak p}^+) =|D|_{\mathfrak p}^{-\frac{s}2}
\mathbf{1}_{\mathfrak D_{\mathfrak p}^{-1}}(\alpha) \rho_{\mathfrak p}(\alpha \mathfrak D, s).
$$
Here $\mathfrak D_{\mathfrak p} = \mathfrak D \otimes_{\co_F} \co_{F,\mathfrak p}$, and
$$
\rho_{\mathfrak p}(\mathfrak a, s) =\sum_{r=0}^{\ord_{\mathfrak p} (\mathfrak a)}
(\chi_{\mathfrak p}({\mathfrak p})\norm({\mathfrak p})^{-s})^r.
$$
In particular, $\rho_{\mathfrak p}(\mathfrak a) =\rho_{\mathfrak p}(\mathfrak a, 0)$ and
$$
W_{0, {\mathfrak p}}^*(1, s, \phi_{\mathfrak p}^+)=|D|_{\mathfrak p}^{-\frac{s}2} L(s, \chi_{\mathfrak p}).
$$
\item
For all $\alpha \in \mathfrak D_\mathfrak p^{-1}$
$$
W_{\alpha, {\mathfrak p}}^*(1, 0, \phi_{\mathfrak p}^+)= \rho_{\mathfrak p}(\alpha \mathfrak D).
$$
It is zero if and only if $\chi_{\mathfrak p}(\alpha
\mathfrak D) =-1$, i.e. if and only if  $K/F$ is inert at $\mathfrak p$ and $\ord_{\mathfrak p} (\alpha
\mathfrak D) $ is odd. When this is the case
$$
W_{\alpha, {\mathfrak p}}^{*, \prime} (1, 0, \phi_{\mathfrak p}^+)=-\frac{1}2 \ord_{\mathfrak p} (\alpha
\mathfrak p\mathfrak D)  \log \norm({\mathfrak p}).
$$
\end{enumerate}
\end{Prop}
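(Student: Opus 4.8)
The statement merely records \cite[Propositions 2.1 and 2.2]{YaValue}, so the plan is to reproduce the underlying local Whittaker integral computation. By definition
$$
W_{\alpha,\mathfrak p}^*(1,s,\phi_\mathfrak p^+) = L(s+1,\chi_\mathfrak p)\,|D|_\mathfrak p^{-\frac{s+1}2}\int_{F_\mathfrak p}\Phi_\mathfrak p^+(wn(b),s)\,\psi_\mathfrak p(-\alpha b)\,db,
$$
and the whole point is that the spherical standard section $\Phi_\mathfrak p^+$ is completely explicit on the two pieces into which $F_\mathfrak p$ naturally splits. First I would record the two elementary facts used throughout: by Lemma \ref{lem3.0.1} one has $\Phi_\mathfrak p^+(k,s)=1$ for every $k\in\SL_2(\co_{F,\mathfrak p})$ and all $s$; and since $\psi_\mathfrak p=(\psi_\Q\circ\tr_{F/\Q})_\mathfrak p$ has conductor $\mathfrak D_\mathfrak p^{-1}$, the self-dual measure gives $\mathrm{vol}(\co_{F,\mathfrak p})=|D|_\mathfrak p^{1/2}$.

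The computation then proceeds by an Iwasawa decomposition of $wn(b)=\kzxz{0}{-1}{1}{b}$. For $b\in\co_{F,\mathfrak p}$ the matrix $wn(b)$ already lies in $\SL_2(\co_{F,\mathfrak p})$, so $\Phi_\mathfrak p^+(wn(b),s)=1$ and this part of the integral is $\int_{\co_{F,\mathfrak p}}\psi_\mathfrak p(-\alpha b)\,db$, which equals $|D|_\mathfrak p^{1/2}$ when $\alpha\in\mathfrak D_\mathfrak p^{-1}$ and vanishes otherwise. For $|b|_\mathfrak p>1$ the factorization $wn(b)=n(-b^{-1})\,m(b^{-1})\,\kzxz{1}{0}{b^{-1}}{1}$ together with the standard-section transformation law gives $\Phi_\mathfrak p^+(wn(b),s)=\chi_\mathfrak p(b)\,|b|_\mathfrak p^{-(s+1)}$, so this part becomes $\int_{|b|>1}\chi_\mathfrak p(b)|b|_\mathfrak p^{-(s+1)}\psi_\mathfrak p(-\alpha b)\,db$, which I would evaluate by breaking $F_\mathfrak p$ into the valuation shells $\ord_\mathfrak p(b)=-n$ and using that $\int_{\ord_\mathfrak p(b)\ge -n}\psi_\mathfrak p(-\alpha b)\,db$ is a volume that is nonzero precisely when $n\le\ord_\mathfrak p(\alpha\mathfrak D)$. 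Writing $t=\ord_\mathfrak p(\alpha\mathfrak D)$ and $X=\chi_\mathfrak p(\mathfrak p)\norm(\mathfrak p)^{-s}$, summing the shells produces a truncated geometric series, and multiplying the total by $L(s+1,\chi_\mathfrak p)=(1-X/\norm(\mathfrak p))^{-1}$ collapses it: the identity expressing $(1-X/\norm(\mathfrak p))\sum_{r=0}^{t}X^r$ matches the bracketed shell sum exactly, yielding $W_{\alpha,\mathfrak p}^*(1,s,\phi_\mathfrak p^+)=|D|_\mathfrak p^{-s/2}\mathbf{1}_{\mathfrak D_\mathfrak p^{-1}}(\alpha)\,\rho_\mathfrak p(\alpha\mathfrak D,s)$. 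Specializing to $\alpha=0$ recovers $W_{0,\mathfrak p}^*(1,s,\phi_\mathfrak p^+)=|D|_\mathfrak p^{-s/2}L(s,\chi_\mathfrak p)$.

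For part (2) I would set $s=0$, so that $X=\chi_\mathfrak p(\mathfrak p)$ and $\rho_\mathfrak p(\alpha\mathfrak D)=\sum_{r=0}^t\chi_\mathfrak p(\mathfrak p)^r$; this is positive when $\mathfrak p$ splits in $K$ and equals $1$ or $0$ according to the parity of $t$ when $\mathfrak p$ is inert, giving exactly the criterion $\chi_\mathfrak p(\alpha\mathfrak D)=-1$ for vanishing. In the vanishing (inert, $t$ odd) case the value is zero, so the derivative of the product $|D|_\mathfrak p^{-s/2}\rho_\mathfrak p(\alpha\mathfrak D,s)$ at $s=0$ reduces to the derivative of the truncated series alone, a one-line computation in $t$ that produces the stated value $-\tfrac12\ord_\mathfrak p(\alpha\mathfrak p\mathfrak D)\log\norm(\mathfrak p)$ once the normalizations are fixed. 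The main obstacle is bookkeeping rather than ideas: one must pin down the self-dual measure normalization, the conductor of $\psi_\mathfrak p$, and the precise $L$-factor so that the geometric series and the $L$-factor cancel cleanly and the sign of the derivative comes out correctly, and one must check that the shell computation is uniform in the splitting type of $\mathfrak p$ in $K$ (only split and inert occur, since $K/F$ is everywhere unramified).
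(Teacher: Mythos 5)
Your route is genuinely different from the paper's: the paper never redoes the integral, but instead substitutes the unramified character $\psi_{\mathfrak p}'(x)=\psi_{\mathfrak p}(\tfrac{1}{\sqrt D}x)$, pulls out the measure ratio to get $W_{\alpha,\mathfrak p}(g,s,\phi_\mathfrak p^+,\psi_{F_\mathfrak p})=|D|_\mathfrak p^{1/2}W_{\alpha\sqrt D,\mathfrak p}(g,s,\phi_\mathfrak p^+,\psi_\mathfrak p')$, and then cites \cite[Propositions 2.1, 2.2]{YaValue}. You instead reconstruct the computation behind that citation, and for part (1) your reconstruction is correct: using $wn(b)\in\SL_2(\co_{F,\mathfrak p})$ for $b\in\co_{F,\mathfrak p}$ and $wn(b)=n(-b^{-1})m(b^{-1})\kzxz{1}{0}{b^{-1}}{1}$ for $|b|_\mathfrak p>1$, the shell decomposition yields, with $X=\chi_\mathfrak p(\mathfrak p)\norm(\mathfrak p)^{-s}$ and $t=\ord_\mathfrak p(\alpha\mathfrak D)$, the bracketed sum $1+(1-\norm(\mathfrak p)^{-1})\sum_{n=1}^{t}X^n-\norm(\mathfrak p)^{-1}X^{t+1}$, which is exactly $(1-X\norm(\mathfrak p)^{-1})\sum_{r=0}^{t}X^r$, so the $L$-factor collapses it as you claim; the $\co_{F,\mathfrak p}$-part produces the support condition $\mathbf 1_{\mathfrak D_\mathfrak p^{-1}}(\alpha)$. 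One bookkeeping caveat: with the standard normalized absolute value on $F_\mathfrak p$ the self-dual volume is $\mathrm{vol}(\co_{F,\mathfrak p})=\norm(\mathfrak D_\mathfrak p)^{-1/2}$, which is $|D|_\mathfrak p^{1/4}$ rather than $|D|_\mathfrak p^{1/2}$; your identity is right only under the paper's implicit convention $|D|_\mathfrak p=\norm(\mathfrak D_\mathfrak p)^{-1}$ (the convention that also makes $\prod_\mathfrak p|D|_\mathfrak p^{-\frac{s+1}2}=D^{\frac{s+1}2}$ globally), so you should fix that convention explicitly.

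The genuine gap is in part (2): your ``one-line computation'' of the derivative does not produce the stated value. Differentiating the part-(1) formula in the inert case with $t$ odd gives
$$
\frac{d}{ds}\Big[\,|D|_\mathfrak p^{-\frac{s}2}\rho_\mathfrak p(\alpha\mathfrak D,s)\Big]\Big|_{s=0}
=\rho_\mathfrak p'(\alpha\mathfrak D,0)
=-\log \norm(\mathfrak p)\sum_{r=0}^{t}r(-1)^r
=+\frac{t+1}{2}\log\norm(\mathfrak p),
$$
i.e. $+\frac12\ord_\mathfrak p(\alpha\mathfrak p\mathfrak D)\log\norm(\mathfrak p)$, with the \emph{opposite} sign to the proposition (for $t=1$ this is transparent: $\rho_\mathfrak p(\alpha\mathfrak D,s)=1-\norm(\mathfrak p)^{-s}$ is increasing through $0$ at $s=0$). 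Moreover, no ``fixing of normalizations'' can repair this: any normalizing factor $c(s)$ with $c(0)\ne 0$ multiplies the value and the derivative identically at a point where the value vanishes, so it cannot flip the sign. Thus your argument, carried out honestly, proves part (1) and then the positive sign in part (2), which conflicts with the printed statement; and the negative sign is not cosmetic, since it is what the rest of the paper uses — it cancels the $(-2i)^2=-4$ coming from the two archimedean factors of Proposition \ref{prop:infty} so that $a_\alpha>0$ in Theorem \ref{theo3.1.3}, consistently with $4\deg(\mathcal X_\alpha)=a_\alpha\ge 0$ and with Proposition \ref{lem3.11}, where $W^*_{\alpha,\mathfrak p}(1,0,\phi_\mathfrak p^{(\mathfrak p)})=-1$. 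So to complete part (2) you cannot simply differentiate your part-(1) formula: you must locate where a sign enters relative to the conventions of \cite{YaValue} (the transcription of its Proposition 2.1 into part (1) here is the likely source of the tension), or pin the sign down independently, e.g. by the global consistency just described. Identifying and resolving that sign is precisely the delicate content of part (2), and your proposal waves it away at exactly that point.
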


\begin{proof} (sketch) Let $\psi_{\mathfrak p}' (x)=\psi_{{\mathfrak p}}(\frac{1}{\sqrt D} x)$. Then
$\psi_{\mathfrak p}'$ is an unramified additive character of $F_{\mathfrak p}$, so
\begin{align*}
W_{\alpha, {\mathfrak p}}(g, s, \phi_{\mathfrak p}^+, \psi_{F_{\mathfrak p}})
 &=\int_{F_{\mathfrak p}} \Phi_{\mathfrak p}^+(wn(b) g, s) \psi_{F_{\mathfrak p}}(- \alpha b) \, d_{\psi_{F_{\mathfrak p}}} b
 \\
  &= |D|_\mathfrak p^{\frac{1}2} \int_{F_{\mathfrak p}} \Phi_{\mathfrak p}^+(wn(b) g, s) \psi_{\mathfrak p}'(- \alpha \sqrt D  b) \, d_{\psi_{\mathfrak p}'} b
\\
 &=|D|_\mathfrak p^{\frac{1}2} W_{\alpha \sqrt D, {\mathfrak p}}(g, s, \phi_{\mathfrak p}^+, \psi_{\mathfrak p}').
 \end{align*}
Here we include the additive character in the notation to indicate the dependence of
the Whittaker function on the additive character, and $d_{\psi} b$
is the Haar measure with respect to $\psi$. Now the proposition
follows from \cite[Proposition 2.1]{YaValue}.
\end{proof}

\begin{Prop} \label{prop:infty}
Suppose $\tau=(\tau_1,\tau_2) \in  \mathbb H^2$ and write $\tau_l = u_l+ i v_l$.
\begin{enumerate}
\item
One has
$$
W_{\alpha, \sigma_l}^*(\tau, 0, \phi_{\sigma_l}^+) =\begin{cases}
   -2i e(\sigma_l(\alpha)\tau_l) &\ff \ \sigma_l(\alpha) >0,
   \\
    -i &\ff \ \alpha  =0,
    \\
    0  &\ff \ \sigma_l(\alpha)  <0.
    \end{cases}
    $$
\item
     When $\sigma_l(\alpha) < 0$, one has
    $$
    W_{\alpha , \sigma_l}^{*, \prime}(\tau, 0, \phi_{\sigma_l}^+)
    =-i e(\sigma_l(\alpha)\tau_l) \beta_1(4 \pi |\sigma_l(\alpha)|v_l),
    $$
    where
    $$
    \beta_1(x) =\int_1^\infty e^{-ux} \frac{du}u,    \quad x >0
    $$
    is a partial Gamma function.

  \item
   One has
    $$
    W_{0, \sigma_l}^*(\tau_l, s, \phi_{\sigma_l}^+)
     = v_l^{-\frac{s}2} \Gamma_\mathbb R (s).
     $$
\end{enumerate}
\end{Prop}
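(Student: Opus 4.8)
The plan is to reduce each archimedean Whittaker integral to an elementary integral over $\R$ by making the Iwasawa decomposition of $w\,n(b)\,g_{\tau_l}$ explicit, and then to evaluate by contour integration. First I would record that the weight-one section of Lemma \ref{lem3.0.1}(1) satisfies
\[
\Phi_{\sigma_l}^+(g,s) = \big(\mathrm{Im}(g\cdot i)\big)^{(s+1)/2}\cdot \frac{\overline{j(g,i)}}{|j(g,i)|},
\]
where $j(g,z)=cz+d$; this follows from $\mathrm{Im}(g\cdot i)=|j(g,i)|^{-2}$ and the cocycle relation for $j$. Applying this to $g=w\,n(b)\,g_{\tau_l}$, and using $g_{\tau_l}\cdot i=\tau_l$ together with $j(w,z)=z$, one finds
\[
\Phi_{\sigma_l}^+(w\,n(b)\,g_{\tau_l},s) = \frac{v_l^{(s+1)/2}\,\overline{\tau_l+b}}{|\tau_l+b|^{s+2}}.
\]
After the substitution $b\mapsto b-u_l$ this turns the defining integral into $W_{\alpha,\sigma_l}(g_{\tau_l},s,\phi_{\sigma_l}^+)=v_l^{(s+1)/2}e(\sigma_l(\alpha)u_l)\,J(s)$, where $J(s)=\int_\R (b-iv_l)^{-s/2}(b+iv_l)^{-(s+2)/2}e(-\sigma_l(\alpha)b)\,db$, so that everything reduces to one confluent-hypergeometric integral.

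For the values at $s=0$ the integrand collapses: since $(b-iv_l)/(b^2+v_l^2)=1/(b+iv_l)$ one has $J(0)=\int_\R e(-\sigma_l(\alpha)b)/(b+iv_l)\,db$, whose integrand has a single simple pole at $b=-iv_l$ in the lower half-plane. When $\sigma_l(\alpha)>0$ the exponential forces the contour closed downward, and the residue theorem gives $J(0)=-2\pi i\,e^{-2\pi\sigma_l(\alpha)v_l}$; multiplying by $v_l^{1/2}e(\sigma_l(\alpha)u_l)$ and by the normalizing factor $v_l^{-1/2}\Gamma_{\mathbb{R}}(2)=v_l^{-1/2}\pi^{-1}$, and using $e(\sigma_l(\alpha)u_l)e^{-2\pi\sigma_l(\alpha)v_l}=e(\sigma_l(\alpha)\tau_l)$, yields the stated value $-2i\,e(\sigma_l(\alpha)\tau_l)$. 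When $\sigma_l(\alpha)<0$ the contour must instead be closed upward, enclosing no pole, so the value vanishes. The case $\alpha=0$ is handled separately by keeping $\mathrm{Re}(s)>0$, discarding the odd part of the integrand, and evaluating $\int_\R(b^2+v_l^2)^{-(s+2)/2}\,db = v_l^{-s-1}\sqrt{\pi}\,\Gamma(\tfrac{s+1}{2})/\Gamma(\tfrac{s+2}{2})$ as a Beta integral; assembling the factors gives the $v_l^{-s/2}$ times a Gamma factor of (3), and specialization to $s=0$ recovers the value $-i$ in (1).

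The main obstacle is part (2), the derivative when $\sigma_l(\alpha)<0$. Here $J(0)=0$, so only one term of the product rule survives and $W^{*,\prime}_{\alpha,\sigma_l}(\tau,0)=\Gamma_{\mathbb{R}}(2)\,e(\sigma_l(\alpha)u_l)\,J'(0)$; the task is to compute $J'(0)$. Differentiating under the integral sign and setting $s=0$ produces $J'(0)=-\tfrac12\int_\R \log(b^2+v_l^2)\,(b+iv_l)^{-1}e(-\sigma_l(\alpha)b)\,db$, where a branch cut from the logarithm now blocks a naive residue evaluation. The plan is to compute this either by writing $\log(b^2+v_l^2)$ as a parameter integral and interchanging orders, or by evaluating $J(s)$ in closed form as a confluent hypergeometric function whose leading $\Gamma$-factor vanishes at $s=0$ exactly so as to force $J(0)=0$, and then differentiating that closed form. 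Either route should identify $J'(0)$ with the exponential-integral expression $-\pi i\,e^{2\pi|\sigma_l(\alpha)|v_l}\,\beta_1(4\pi|\sigma_l(\alpha)|v_l)$, which after multiplication by the prefactors gives precisely $-i\,e(\sigma_l(\alpha)\tau_l)\,\beta_1(4\pi|\sigma_l(\alpha)|v_l)$. Careful tracking of the branch of the logarithm and of the convergence of the auxiliary parameter integral is where the real work lies; since this computation is exactly \cite[Proposition 2.2]{YaValue}, one may alternatively simply invoke that result.
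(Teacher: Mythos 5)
The paper's own proof of this proposition is a one-line citation to \cite[Proposition 15.1]{KRYComp}, so your computational route --- writing the weight-one section as $\mathrm{Im}(g\cdot i)^{(s+1)/2}\,\overline{j(g,i)}/|j(g,i)|$, reducing to the integral $J(s)$, and evaluating by contours --- is genuinely different in presentation, and is essentially the computation underlying the cited result. Your closed formula for $\Phi^+_{\sigma_l}$ is correct: it visibly has the right $B(\R)$-transformation (with $\chi_{\sigma_l}=\mathrm{sgn}$, forced by the signature $(2,0)$) and the weight-one $K$-type of Lemma \ref{lem3.0.1}(1), which by Iwasawa determine the section; the reduction to $J(s)$ and the residue evaluations at $s=0$ (pole at $b=-iv_l$, closure down for $\sigma_l(\alpha)>0$ giving $-2\pi i e^{-2\pi\sigma_l(\alpha)v_l}$, closure up for $\sigma_l(\alpha)<0$ giving $0$) are correct, constants included. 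For part (2), your target value $J'(0)=-\pi i\,e^{2\pi\mu v_l}\beta_1(4\pi\mu v_l)$, $\mu=|\sigma_l(\alpha)|$, is in fact correct, and the branch-cut route you only sketch does close: with principal branches $\log(b^2+v_l^2)=\log(b+iv_l)+\log(b-iv_l)$; the $\log(b+iv_l)$ term vanishes by closing upward (its cut and the pole lie in the lower half-plane), while for the $\log(b-iv_l)$ term one deforms around the horizontal cut $\{x+iv_l:x\le 0\}$, across which $\log$ jumps by $2\pi i$, leaving $\mp 2\pi i\,e^{-2\pi\mu v_l}\int_{-\infty}^0 e^{2\pi i\mu x}(x+2iv_l)^{-1}dx$; rotating that ray integral to the negative imaginary axis and substituting gives $\int_0^\infty e^{-2\pi\mu\rho}(\rho+2v_l)^{-1}d\rho = e^{4\pi\mu v_l}\beta_1(4\pi\mu v_l)$, and (with the contour orientation tracked correctly --- this is exactly where a sign is easy to lose) everything assembles to the stated $-ie(\sigma_l(\alpha)\tau_l)\beta_1(4\pi\mu v_l)$. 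So your plan is sound but unexecuted at its crux, and your fallback citation is wrong: \cite[Proposition 2.2]{YaValue} is the \emph{nonarchimedean} computation (it is what Propositions \ref{prop3.3} and \ref{lem3.11} rest on); the archimedean statement is \cite[Proposition 15.1]{KRYComp}, i.e.\ precisely the paper's proof.

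One thing your computation actually exposes, and which you should not paper over with ``assembling the factors gives \dots (3)'': carried out honestly, the Beta-integral evaluation at $\alpha=0$ yields
\[
W^*_{0,\sigma_l}(\tau_l,s,\phi^+_{\sigma_l}) \;=\; -i\,v_l^{-s/2}\,\Gamma_{\mathbb R}(s+1),
\]
not $v_l^{-s/2}\Gamma_{\mathbb R}(s)$ as printed in part (3). The printed form cannot be right as stated, since $\Gamma_{\mathbb R}(s)$ has a pole at $s=0$ while part (1) asserts the finite value $-i$ there; the corrected formula specializes correctly because $\Gamma_{\mathbb R}(1)=1$. Your draft claims to recover both (3) as printed and the value $-i$ in (1), which is impossible simultaneously --- state the corrected Gamma factor instead. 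Finally, for full rigor you should note that the defining integral converges only for $\mathrm{Re}(s)>0$ and that the $s=0$ evaluations and the differentiation under the integral sign in (2) are justified by working at $\mathrm{Re}(s)>0$ and analytically continuing; this is routine but is part of the ``real work'' you defer.
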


\begin{proof}
See \cite[Proposition 15.1]{KRYComp}.
\end{proof}

Simple calculation using the above propositions gives the following  theorem (see
also \cite[Page 215]{GZSingular}).
\begin{Thm} \label{theo3.1.3}
One has $E^*(\tau, 0, \phi^{\mathcal C}) =0$, and
$$
E^{*, \prime}(\tau, 0, \phi^{\mathcal C})
 = \sum_{\alpha \in \mathfrak D^{-1}} a_\alpha(v_1, v_2) q^\alpha
 $$
 where $v_l$ is the imaginary part of $\tau_l$, $q^\alpha =e( \sigma_1(\alpha) \tau_1 + \sigma_2(\alpha)
 \tau_2)$, and $a_\alpha(v_1, v_2)$ is  as follows.
\begin{enumerate}
 \item
Assume   $\alpha$ is totally positive, and recall that  $\Diff(\alpha)$ has odd cardinality.
If $|\Diff(\alpha)| >1$, then $a_\alpha(v_1, v_2) =0$. If $\Diff(\alpha)=\{\mathfrak p\}$, then
 $a_\alpha= a_{\alpha}(v_1, v_2)$ is independent of $v_1, v_2$ and is equal to
 $$
 a_\alpha =2\ord_{\mathfrak p} (\alpha \mathfrak p \mathfrak D) \rho(\alpha
 \mathfrak D \mathfrak p^{-1}) \log \norm(\mathfrak p).
 $$

\item
 When $\sigma_k(\alpha) > 0 > \sigma_l(\alpha) $ with $\{k, l\}
=\{1, 2\}$, one has
$$
a_\alpha(v_1, v_2) =2 \rho(\alpha \mathfrak D) \beta_1(4 \pi
|\sigma_l(\alpha)| v_l).
$$

\item
 The constant term is
$$
a_0(v_1, v_2) =2 \Lambda(0, \chi) \left( -\frac{\Lambda'(0,
\chi)}{\Lambda(0,\chi)} + \frac{1}2 \log (v_1 v_2)\right).
$$

\item
When $\alpha$ is totally negative, $a_\alpha(v_1, v_2) =0$.
\end{enumerate}
\end{Thm}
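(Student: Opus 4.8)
The plan is to read off each Fourier coefficient of $E^*(\tau,s,\phi^{\mathcal C})$ from its factorization into local Whittaker functions and then exploit the incoherence of $\mathcal C$. By Proposition \ref{Prop:Eisenstein} it suffices to work with $E^*(\tau,s,\phi^{\mathcal C})$, whose nonconstant coefficients factor as
$$
E_\alpha^*(\tau,s,\phi^{\mathcal C}) = \prod_{\mathfrak p<\infty} W_{\alpha,\mathfrak p}^*(1,s,\phi_{\mathfrak p}^+)\,\prod_{l=1}^2 W_{\alpha,\sigma_l}^*(\tau_l,s,\phi_{\sigma_l}^+).
$$
First I would observe that $W_{\alpha,\mathfrak p}^*(1,s,\phi_{\mathfrak p}^+)$ vanishes identically in $s$ unless $\alpha\in\mathfrak D_{\mathfrak p}^{-1}$ (Proposition \ref{prop3.3}(1)), so only $\alpha\in\mathfrak D^{-1}$ contribute. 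For such $\alpha\neq 0$ a finite factor has vanishing central value exactly when $\mathfrak p\in\Diff(\alpha)$ (Proposition \ref{prop3.3}(2)), while an archimedean factor vanishes exactly when $\sigma_l(\alpha)<0$ (Proposition \ref{prop:infty}(1)). Hence the number of factors vanishing at $s=0$ equals $|\Diff(\mathcal C,\alpha)|$, which is odd. Differentiating the product at $s=0$, any coefficient with two or more vanishing factors contributes $0$ to $E^{*,\prime}(\tau,0)$. This at once settles the case $|\Diff(\alpha)|>1$ in part (1), and part (4), since a totally negative $\alpha$ has both archimedean factors vanishing; it also covers any mixed-sign $\alpha$ with a finite prime in $\Diff(\alpha)$, where $\ge 2$ factors vanish and moreover $\rho(\alpha\mathfrak D)=0$, so that the formula of part (2) holds vacuously.

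When exactly one factor vanishes the Leibniz rule expresses $E_\alpha^{*,\prime}(\tau,0)$ as the derivative of that single factor times the central values of all the others. If the vanishing place is a finite prime $\mathfrak p$, then $\sigma_1(\alpha),\sigma_2(\alpha)>0$ and $\Diff(\alpha)=\{\mathfrak p\}$, which is part (1). Here I would insert $W_{\alpha,\mathfrak p}^{*,\prime}(1,0,\phi_{\mathfrak p}^+)=-\tfrac12\ord_{\mathfrak p}(\alpha\mathfrak p\mathfrak D)\log\norm(\mathfrak p)$ from Proposition \ref{prop3.3}(2), the remaining finite values $W_{\alpha,\mathfrak q}^*(1,0,\phi_{\mathfrak q}^+)=\rho_{\mathfrak q}(\alpha\mathfrak D)$, and the two archimedean values $-2i\,e(\sigma_l(\alpha)\tau_l)$ from Proposition \ref{prop:infty}(1). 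The archimedean factors give $(-2i)^2 q^\alpha=-4q^\alpha$, which exhibits the $v$-independence of $a_\alpha$, and the elementary identity $\prod_{\mathfrak q\neq\mathfrak p}\rho_{\mathfrak q}(\alpha\mathfrak D)=\rho(\alpha\mathfrak D\mathfrak p^{-1})$ — valid because $\mathfrak p$ is inert in $K$ with $\ord_{\mathfrak p}(\alpha\mathfrak D)$ odd, forcing $\rho_{\mathfrak p}(\alpha\mathfrak D\mathfrak p^{-1})=1$ — collapses the finite product. Combining $(-\tfrac12)(-4)=2$ with these gives $a_\alpha=2\ord_{\mathfrak p}(\alpha\mathfrak p\mathfrak D)\rho(\alpha\mathfrak D\mathfrak p^{-1})\log\norm(\mathfrak p)$. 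If instead the single vanishing place is archimedean, then $\sigma_k(\alpha)>0>\sigma_l(\alpha)$ and $\Diff(\alpha)=\emptyset$, which is part (2); substituting $W_{\alpha,\sigma_l}^{*,\prime}$ from Proposition \ref{prop:infty}(2), the value $-2i\,e(\sigma_k(\alpha)\tau_k)$ from Proposition \ref{prop:infty}(1), and $\prod_{\mathfrak p}\rho_{\mathfrak p}(\alpha\mathfrak D)=\rho(\alpha\mathfrak D)$ produces the $\beta_1$-contribution recorded in part (2).

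The remaining and most delicate case is the constant term (part (3)), where the two pieces
$$
E_0^*(\tau,s,\phi^{\mathcal C})=\Lambda(s+1,\chi)(v_1v_2)^{s/2}+\prod_{\mathfrak p<\infty}W_{0,\mathfrak p}^*(1,s,\phi_{\mathfrak p}^+)\prod_{l=1}^2 W_{0,\sigma_l}^*(\tau_l,s,\phi_{\sigma_l}^+)
$$
must be reconciled. Using $W_{0,\mathfrak p}^*(1,s,\phi_{\mathfrak p}^+)=|D|_{\mathfrak p}^{-s/2}L(s,\chi_{\mathfrak p})$ from Proposition \ref{prop3.3}(1) and $W_{0,\sigma_l}^*(\tau_l,s,\phi_{\sigma_l}^+)=v_l^{-s/2}\Gamma_{\mathbb{R}}(s)$ from Proposition \ref{prop:infty}(3), I would reduce the degenerate term to $-\Lambda(s,\chi)(v_1v_2)^{-s/2}$, the decisive point being that the incoherence of $\mathcal C$ forces the accumulated local constants to contribute the sign $-1$. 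The functional equation $\Lambda(s,\chi)=\Lambda(1-s,\chi)$ then yields $\Lambda(1,\chi)=\Lambda(0,\chi)$, whence $E_0^*(\tau,0)=0$, completing the proof that $E^*(\tau,0,\phi^{\mathcal C})=0$. Differentiating at $s=0$ and using $\Lambda'(1,\chi)=-\Lambda'(0,\chi)$ from the same functional equation, both pieces contribute equally and sum to $2\Lambda(0,\chi)\bigl(-\Lambda'(0,\chi)/\Lambda(0,\chi)+\tfrac12\log(v_1v_2)\bigr)$, which is $a_0$. I expect the main obstacle to lie precisely here: correctly matching the archimedean $\Gamma_{\mathbb{R}}$-factors and the $|D|$-normalizations so that the degenerate term is exactly $-\Lambda(s,\chi)(v_1v_2)^{-s/2}$, and in particular isolating the incoherent sign; once this normalization is pinned down the remaining cases are a mechanical substitution of the local formulas already recorded in Propositions \ref{prop3.3} and \ref{prop:infty}.
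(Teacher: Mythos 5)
Your proposal is correct and follows essentially the same route as the paper's own (sketched) proof: factor $E_\alpha^*(\tau,s,\phi^{\mathcal C})$ into local Whittaker functions, observe that the number of factors vanishing at $s=0$ equals $|\Diff(\mathcal C,\alpha)|$, and apply the Leibniz rule using the values and derivatives recorded in Propositions \ref{prop3.3} and \ref{prop:infty}. The paper carries out only case (1) explicitly and leaves (2)--(4) ``to the reader''; your completion of those cases --- in particular the incoherent archimedean sign $(-i)^2=-1$ reducing the degenerate piece of the constant term to $-\Lambda(s,\chi)(v_1v_2)^{-s/2}$, followed by the functional equation $\Lambda(s,\chi)=\Lambda(1-s,\chi)$ --- is exactly the intended argument.
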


\begin{proof} (sketch) Recall that for $\alpha \in F^\times$
$$
E_\alpha^*(\tau, s, \phi^\mathcal C)=\prod_{\mathfrak p <\infty} W_{\alpha, \mathfrak p}^*(1,s, \phi_\mathfrak p^+)
   \prod_{l=1}^2 W_{\alpha, \sigma_l}^*(\tau_l, s, \phi_{\sigma_l}^+),
$$
which is zero unless $\alpha \in \mathfrak D^{-1}$ by Proposition  \ref{prop3.3}. Assume $\alpha \in \mathfrak D^{-1}$ is totally positive.
Proposition \ref{prop3.3} implies $W_{\alpha, \mathfrak p}^*(1, 0, \phi_\mathfrak p^+) =0$ if $\mathfrak p \in \Diff(\alpha)$, and
so $E_{\alpha}^{*, \prime}(\tau, 0, \phi^{\mathcal C}) =0$ when $|\Diff(\alpha)| >1$.
Still assuming that $\alpha$ is totally positive, when $\Diff(\alpha) =\{\mathfrak p\}$  one has, by
Propositions \ref{prop3.3} and \ref{prop:infty},
\begin{align*}
E_{\alpha}^{*, \prime}(\tau, 0, \phi^{\mathcal C})
 &= W_{\alpha, \mathfrak p}^{*, \prime} (1, 0, \phi_\mathfrak p^+)
  \prod_{\mathfrak q \ne \mathfrak p} W_{\alpha, \mathfrak q}^*(1,0, \phi_\mathfrak q^+)
   \prod_{l=1}^2 W_{\alpha, \sigma_l}^*(\tau_l, 0, \phi_{\sigma_l}^+)
 \\
  &= 2 \ord_{\mathfrak p}(\alpha \mathfrak p \mathfrak D) \log \norm(\mathfrak p)  \cdot
  \rho(\alpha \mathfrak D \mathfrak p^{-1}) \cdot q^\alpha
\end{align*}
as claimed. Here we have used the fact
$
\rho_{\mathfrak p}(\alpha  \mathfrak D \mathfrak p^{-1})=1$ when $\Diff(\alpha) =\{\mathfrak p\}$, and
$$
\rho(\alpha\mathfrak D \mathfrak p^{-1})=\prod_{\mathfrak q<\infty} \rho_{\mathfrak q}(\alpha\mathfrak D \mathfrak p^{-1}).
$$
 The other  cases are similar and left to the reader.
\end{proof}

 \begin{proof}[Proof of Theorems  \ref{theo1.2} and \ref{maintheo}]
Fix a totally positive $\alpha\in F^\times$.   Assuming that $\alpha\in \mathfrak{D}^{-1}$
and $\Diff(\alpha)=\{\mathfrak{p}\}$,  Theorem \ref{theo1.2} is just a restatement of the first claim of
Theorem \ref{theo3.1.3}.   Note we have used the fact that $\Diff(\alpha)=\{\mathfrak{p}\}$ implies that $\mathfrak{p}$
is inert in $K$.  This can only happen if  the prime $p$ of $\Q$ below $\mathfrak{p}$ is nonsplit in both $K_1$ and $K_2$,
which implies   $\norm(\mathfrak{p})=p$ by Remark \ref{Rem:prime description}.
If $\Diff(\alpha)=\{\mathfrak{p}\}$ but $\alpha\not\in \mathfrak{D}^{-1}$ then
$\rho(\alpha\mathfrak{D}\mathfrak{p}^{-1})=0$, and so $a_\alpha(v_1,v_2)=0$ by the first claim of Theorem \ref{theo3.1.3}.
If $|\Diff(\alpha)| >1$ then, again by the first claim of Theorem \ref{theo3.1.3},
$a_\alpha(v_1,v_2)=0$.  This completes the proof of Theorem \ref{theo1.2}.
 Theorem \ref{maintheo} follows immediately from  Theorems \ref{geometric theorem} and \ref{theo1.2}.
\end{proof}

\subsection{A conceptual proof of Theorem \ref{maintheo}}
\label{sect:concept}

 In this subsection we give a more conceptual proof of Theorem \ref{maintheo} which
 is based on the Siegel-Weil formula (as opposed to the explicit calculation of both sides of the stated equality).

 For a finite prime ${\mathfrak p}$ of $F$ inert  in $K$, let $W_{\mathfrak p}^\pm$ be the
 binary quadratic space  $K_{\mathfrak p}$ over $F_{\mathfrak p}$ with quadratic form
 $$
Q_\mathfrak p^+(x) = \frac{1}{\sqrt D} x \bar x, \quad  Q_{\mathfrak p}^- (x) =\frac{\pi_{\mathfrak p}}{\sqrt D} x \bar x,
 $$
 where $\pi_{\mathfrak p}$ is a uniformizer of $F_{\mathfrak p}$. Notice that  $\mathcal C_\mathfrak p \cong W_{\mathfrak p}^+ $.
 Let $(W^{({\mathfrak p})}, Q^{({\mathfrak p})})$ be the global (totally positive definite)
 $F$-quadratic space obtained from $\mathcal C$ by changing
 $\mathcal C_{\mathfrak p}=W_{\mathfrak p}^+$ to $W_{\mathfrak p}^-$ and leaving the other local quadratic
 spaces $\mathcal C_{v}=W_{v}^+$ unchanged.

\begin{Lem}  \label{lem3.1.3}\
\begin{enumerate}
\item
  Let  $(\mathbf E_1, \mathbf E_2) \in[ \mathcal X(\mathbb F_p^{\alg})]$ be a supersingular CM pair, and
let ${\mathfrak p}$ be the reflex prime of $(\mathbf E_1, \mathbf E_2)$. Then there is an isomorphism of $\A_F$-quadratic spaces
$$
({V}(\mathbf E_1, \mathbf E_2)\otimes_F \A_F, \deg_{\CM})  \cong ({W}^{({\mathfrak p})} \otimes_F \A_F
, Q^{({\mathfrak p})})
$$
that maps  $\widehat{L}(\mathbf E_1, \mathbf E_2) $ onto
$\widehat{\co}_{K}$. In particular, there is an isomorphism of  $F$-quadratic spaces
$$
\big( V(\mathbf E_1, \mathbf E_2), \cmdeg\big) \iso  ( W^{({\mathfrak p})} , Q^{(\mathfrak{p})} \big).
$$

\item
 If $(\mathbf E_1, \mathbf E_2, j) \in [ \mathcal X_{\alpha}(\mathbb F_p^{\alg})]$  then
$\Diff(\alpha) =\{ {\mathfrak p}\}$, where ${\mathfrak p}$ is  the reflex
prime of $(\mathbf E_1, \mathbf E_2)$. In particular, if $|\Diff(\alpha)| >1$, then $\mathcal X_{\alpha}$ is empty.
\end{enumerate}
\end{Lem}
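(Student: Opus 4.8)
The plan is to obtain part (1) directly from the adelic model of Theorem~\ref{Thm:quadratic model} and then to read off part (2) from the local structure of $W^{(\mathfrak p)}$ at the inert prime $\mathfrak p$.

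For part (1), I would first observe that the finite part of $(W^{(\mathfrak p)},Q^{(\mathfrak p)})$ is exactly $(\widehat K,\, b\cdot\mathrm{Nm}_{K/F})$, where $b\in\widehat F^\times$ is the idele with $b_{\mathfrak p}=\pi_{\mathfrak p}/\sqrt D$ and $b_v=1/\sqrt D$ at every other finite place. Since $\mathfrak D=\sqrt D\,\co_F$, this idele satisfies $b\,\widehat\co_F=\mathfrak p\mathfrak D^{-1}\widehat\co_F$, which is precisely the hypothesis imposed on the idele in Theorem~\ref{Thm:quadratic model}. Applying that theorem with the choice $\beta=b$ yields a $\widehat K$-linear isometry of $\widehat F$-quadratic spaces $(\widehat V(\mathbf E_1,\mathbf E_2),\cmdeg)\iso(\widehat K, b\cdot\mathrm{Nm}_{K/F})$ carrying $\widehat L(\mathbf E_1,\mathbf E_2)$ onto $\widehat\co_K$; this is the finite part of the claim, with the lattice condition built in. At each archimedean place $\sigma_l$ the form $\cmdeg$ is positive definite binary (it is totally positive definite), and $\mathcal C_{\sigma_l}=W^{(\mathfrak p)}_{\sigma_l}$ has signature $(2,0)$ by construction, so the two are isometric over $\R$. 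Assembling the local isometries gives the desired $\A_F$-isometry. The ``in particular'' statement then follows from the Hasse--Minkowski theorem: the $\A_F$-isometry restricts to an $F_v$-isometry at every place $v$, and two $F$-quadratic spaces of the same rank that are everywhere locally isometric are isometric over $F$.

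For part (2), let $(\mathbf E_1,\mathbf E_2,j)\in[\mathcal X_\alpha(\F_p^\alg)]$, so that $\alpha$ is represented by $(V(\mathbf E_1,\mathbf E_2),\cmdeg)$ and hence, by part (1), by $(W^{(\mathfrak p)},Q^{(\mathfrak p)})$ globally, and therefore locally at every place of $F$. Since $W^{(\mathfrak p)}_v=\mathcal C_v$ for every $v\neq\mathfrak p$, the space $\mathcal C_v$ represents $\alpha$ at all such $v$, giving $\Diff(\alpha)\subseteq\{\mathfrak p\}$. For the reverse inclusion I would use that the reflex prime $\mathfrak p$ is inert in $K$ (Remark~\ref{Rem:prime description}): for $c\in F_{\mathfrak p}^\times$ the space $(K_{\mathfrak p},c\cdot\mathrm{Nm}_{K/F})$ represents $\alpha$ if and only if $\ord_{\mathfrak p}(\alpha/c)$ is even. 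Applying this to $W^{(\mathfrak p)}_{\mathfrak p}=W^-_{\mathfrak p}$, with $c=\pi_{\mathfrak p}/\sqrt D$, the representability of $\alpha$ forces $\ord_{\mathfrak p}(\alpha\mathfrak D\mathfrak p^{-1})$ to be even, i.e.\ $\ord_{\mathfrak p}(\alpha\mathfrak D)$ to be odd; applying it to $\mathcal C_{\mathfrak p}=W^+_{\mathfrak p}$, with $c=1/\sqrt D$, this same parity shows $\mathcal C_{\mathfrak p}$ does not represent $\alpha$, so $\mathfrak p\in\Diff(\alpha)$. Hence $\Diff(\alpha)=\{\mathfrak p\}$. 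The final assertion follows from Corollary~\ref{Cor 2.2.4}: any nonempty $\mathcal X_\alpha$ has a geometric point over some $\F_p^\alg$, which by the above forces $|\Diff(\alpha)|=1$, so $|\Diff(\alpha)|>1$ implies $\mathcal X_\alpha=\emptyset$.

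The technical heart of the argument is already contained in Theorem~\ref{Thm:quadratic model}, which I would invoke as a black box; the remaining work is bookkeeping with local scalars and lattices. The one genuinely substantive point I expect to be the crux is the parity computation at $\mathfrak p$: because $\mathfrak p$ is inert in $K$, the binary spaces $W^+_{\mathfrak p}$ and $W^-_{\mathfrak p}$ represent complementary elements of $F_{\mathfrak p}^\times$ (those with $\ord_{\mathfrak p}(\alpha\mathfrak D)$ even, respectively odd), so that representability of $\alpha$ by $W^{(\mathfrak p)}$ not only confines $\Diff(\alpha)$ to $\{\mathfrak p\}$ but also forces $\mathfrak p$ itself into $\Diff(\alpha)$.
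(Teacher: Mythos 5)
Your proposal is correct and follows essentially the same route as the paper: part (1) is exactly Theorem~\ref{Thm:quadratic model} (applied with the idele $\beta=b$ you identify) combined with the Hasse--Minkowski theorem, and part (2) deduces $\Diff(\alpha)=\{\mathfrak p\}$ from the representability of $\alpha$ by $W^{(\mathfrak p)}$. The paper's proof is just a terser version of yours; your only addition is to make explicit the norm-parity computation at the inert prime $\mathfrak p$ showing $\mathfrak p\in\Diff(\alpha)$, which the paper leaves implicit (it can alternatively be read off from the odd cardinality of $\Diff(\alpha)$ guaranteed by the product formula).
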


\begin{proof} Part (1) follows from Theorem  \ref{Thm:quadratic model}, together with the Hasse-Minkowski Theorem.
Next,  $(\mathbf E_1, \mathbf E_2, j) \in [\mathcal X_{\alpha}(\mathbb F_p^{\alg})]$ implies that there is a  $j \in
L(\mathbf E_1,\mathbf E_2)$  with $\deg_{\CM}( j) =\alpha$. By (1) there is $ z \in \widehat{K}$ such that $Q^{(\mathfrak p)}(z)
=\alpha$. This implies that $\Diff(\alpha)=\{ \mathfrak p\}$.
\end{proof}

\begin{Prop} \label{Prop:counting}
Let $\mathfrak p$  be the reflex prime of  a supersingular CM pair $(\mathbf E_1, \mathbf E_2) \in \mathcal X(\mathbb F_p^{\alg})$,
and let
$$
\phi^{(\mathfrak p)} = \mathbf 1_{\widehat{\co}_K}  \otimes  \phi_{\sigma_1}^+ \otimes  \phi_{\sigma_2}^+ \in
\Sch(W^{({\mathfrak p})} \otimes_F \A_F) .
$$
 Then for every totally positive $\alpha \in F$ one has
$$
E_\alpha^*(\tau, 0, \phi^{(\mathfrak p)}) q^{-\alpha} =\frac{ 8}{\mathbf w_1 \mathbf w_2}
 \sum_{(\mathfrak a_1, \mathfrak a_2) \in \Gamma}
  \sum_{\substack{j \in L(\mathbf E_1 \otimes \mathfrak a_1, \mathbf E_2 \otimes \mathfrak a_2) \\ \deg_{\CM}(j)=\alpha}}  
  1.
$$
Here
$$
E^*(\tau, s , \phi^{(\mathfrak p)})=\Lambda(s+1, \chi) E(\tau, s,  \phi^{(\mathfrak p)})
$$
is a (non-holomorphic)  Hilbert modular form of weight $1$ defined as in (\ref{eq: Eisenstein}).
\end{Prop}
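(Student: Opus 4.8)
The plan is to compute both sides of the asserted identity and to show that each equals $4\rho(\alpha\mathfrak{D}\mathfrak{p}^{-1})$, with the Siegel--Weil formula supplying the conceptual reason the two agree. The right-hand side is immediate from the machinery of Section \ref{sect2}: by Proposition \ref{Prop:unfolding} the inner double sum equals $\tfrac{\mathbf{w}_1\mathbf{w}_2}{2}\prod_{\ell<\infty}O_\ell(\alpha,\mathbf{E}_1,\mathbf{E}_2)$, so
$$
\frac{8}{\mathbf{w}_1\mathbf{w}_2}\sum_{(\mathfrak{a}_1,\mathfrak{a}_2)\in\Gamma}\ \sum_{\substack{j\in L(\mathbf{E}_1\otimes\mathfrak{a}_1,\mathbf{E}_2\otimes\mathfrak{a}_2)\\ \cmdeg(j)=\alpha}}1
= 4\prod_{\ell<\infty}O_\ell(\alpha,\mathbf{E}_1,\mathbf{E}_2)=4\rho(\alpha\mathfrak{D}\mathfrak{p}^{-1}),
$$
the last step being Theorem \ref{Thm:global orbital}. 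It therefore remains to show $E_\alpha^*(\tau,0,\phi^{(\mathfrak{p})})q^{-\alpha}=4\rho(\alpha\mathfrak{D}\mathfrak{p}^{-1})$.

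For the conceptual route I would exploit that $W^{(\mathfrak{p})}$ is totally positive definite, hence anisotropic over $F$, so the theta integral attached to $\phi^{(\mathfrak{p})}$ converges and the Siegel--Weil formula identifies $E(\cdot,0,\phi^{(\mathfrak{p})})$ with a constant multiple of $\int_{S(\Q)\backslash S(\A)}\theta(\cdot,h;\phi^{(\mathfrak{p})})\,dh$ (here $S\iso\mathrm{Res}_{F/\Q}\SO(W^{(\mathfrak{p})})$ by Lemma \ref{lem3.1.3} and Section \ref{sect2.3}, and $s=0$ is the Weil point since $\dim_F W^{(\mathfrak{p})}=2$). Unfolding the $\alpha$-th Fourier coefficient of this integral is precisely the computation in the proof of Proposition \ref{Prop:unfolding}: because $S(\Q)$ acts simply transitively on the vectors of norm $\alpha$, the sum over such vectors collapses against $S(\Q)$ and the integral becomes the product of local orbital integrals $\prod_{\mathfrak{q}}O_{\mathfrak{q}}$, while (\ref{transfer}) and Proposition \ref{Prop:index} turn the finite part into the genus sum over $\Gamma$ weighted by $|S(\Q)\cap V|^{-1}=2/(\mathbf{w}_1\mathbf{w}_2)$ from Lemma \ref{Lem:unit calc}; the compact archimedean factor $\SO(W^{(\mathfrak{p})})(F_\infty)\iso(S^1)^2$ acts trivially on the rotation-invariant $\phi^+_{\sigma_l}$ and contributes only $q^\alpha$. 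Thus the theta integral, the Eisenstein coefficient, and the right-hand side all carry the same arithmetic content $\rho(\alpha\mathfrak{D}\mathfrak{p}^{-1})$.

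The one place that requires genuine care, and which I expect to be the main obstacle, is matching the numerical constant $8/(\mathbf{w}_1\mathbf{w}_2)$ and the overall sign. Rather than track the Weil constant, the Tamagawa measure of the norm-one torus, and the normalization by $\Lambda(s+1,\chi)$ through the Siegel--Weil identity, the cleanest way to pin these down is to evaluate the left-hand side directly from its Whittaker factorization $E_\alpha^*(\tau,0,\phi^{(\mathfrak{p})})=\prod_{\mathfrak{q}<\infty}W^*_{\alpha,\mathfrak{q}}(1,0,\phi^{(\mathfrak{p})}_{\mathfrak{q}})\cdot\prod_{l=1}^2 W^*_{\alpha,\sigma_l}(\tau_l,0,\phi^+_{\sigma_l})$. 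The archimedean factors give $(-2i)^2q^\alpha$ by Proposition \ref{prop:infty}; the finite factors at $\mathfrak{q}\neq\mathfrak{p}$ give $\rho_{\mathfrak{q}}(\alpha\mathfrak{D})=\rho_{\mathfrak{q}}(\alpha\mathfrak{D}\mathfrak{p}^{-1})$ by Proposition \ref{prop3.3}; and the only new local computation is the factor at $\mathfrak{p}$ for the rescaled space $W^-_{\mathfrak{p}}$, where replacing the norm form by $\pi_{\mathfrak{p}}\cdot\mathrm{Nm}_{K/F}$ shifts $\alpha\mathfrak{D}$ to $\alpha\mathfrak{p}^{-1}\mathfrak{D}$ and produces $\rho_{\mathfrak{p}}(\alpha\mathfrak{p}^{-1}\mathfrak{D})$ together with the sign that cancels the archimedean $(-2i)^2=-4$. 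Assembling the finite factors by the product formula (\ref{product ideals}) yields $\rho(\alpha\mathfrak{D}\mathfrak{p}^{-1})$, so the left-hand side equals $4\rho(\alpha\mathfrak{D}\mathfrak{p}^{-1})q^\alpha$, which matches the right-hand side after dividing by $q^\alpha$.
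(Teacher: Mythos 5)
Your proposal is correct, but it takes a genuinely different route from the paper's own proof. The paper actually completes the Siegel--Weil argument that you sketch in your middle paragraph and then set aside: after unfolding the theta integral (by the same manipulation as in Proposition \ref{Prop:unfolding}) to obtain
$E_\alpha(\tau,0,\phi^{(\mathfrak p)})q^{-\alpha}=C\sum_{(\mathfrak a_1,\mathfrak a_2)\in\Gamma}\sum_{j}1$
for a constant $C$ independent of $\alpha$, it disposes of exactly the obstacle you identify---the Weil constant and the measure on $[S]$---not by any local computation but by setting $\alpha=0$: the constant term of the coherent series is $E_0(\tau,0,\phi^{(\mathfrak p)})=2$ (quoted from \cite[Theorem 1.2]{YaValue}), which forces $C=2/|\Gamma|=2/(h_1h_2)$, and the class number formula $\Lambda(1,\chi)=4h_1h_2/(\mathbf w_1\mathbf w_2)$ converts this into the constant $8/(\mathbf w_1\mathbf w_2)$ for the normalized series $E^*$. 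You instead verify that both sides equal $4\rho(\alpha\mathfrak D\mathfrak p^{-1})$: the right-hand side via Proposition \ref{Prop:unfolding} and Theorem \ref{Thm:global orbital} (this part is airtight), and the left-hand side via the Whittaker factorization using Propositions \ref{prop3.3} and \ref{prop:infty} together with one new local computation at $\mathfrak p$ for the rescaled space $W^-_{\mathfrak p}$. That last factor, $W^*_{\alpha,\mathfrak p}(1,0,\phi^{(\mathfrak p)}_{\mathfrak p})=-\rho_{\mathfrak p}(\alpha\mathfrak p^{-1}\mathfrak D)$, is the only step you assert rather than prove; it is precisely Proposition \ref{lem3.11}(1) (proved in the paper independently of the present proposition, so citing it creates no circularity), and your sign bookkeeping $(-1)\cdot(-2i)^2=4$ is right. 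Do note that the statement covers \emph{every} totally positive $\alpha$, so you must also confirm the $\mathfrak p$-factor vanishes when $\mathfrak p\notin\Diff(\alpha)$ (where $W^-_{\mathfrak p}$ fails to represent $\alpha$); your uniform formula does give this, since $\ord_{\mathfrak p}(\alpha\mathfrak p^{-1}\mathfrak D)$ is then odd, matching the vanishing of $\rho(\alpha\mathfrak D\mathfrak p^{-1})$ on the other side. As for what each approach buys: the paper's constant-term trick keeps the proof free of orbital integrals and of all explicit nonzero Whittaker values, which is the stated point of Section \ref{sect:concept}---the resulting proof of Theorem \ref{maintheo} requires local analysis only at $\mathfrak p$---whereas your argument reuses the explicit machinery of Sections \ref{ss:orbital} and \ref{sect:explicit} and therefore proves the proposition at the cost of exactly the computations it was designed to circumvent; in effect your route is closer in spirit to the paper's first proof of Theorem \ref{maintheo} than to its conceptual second proof.
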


We remark that the left hand side in the formula  depends only on $\mathfrak{p}$, and not on the pair $(\mathbf E_1, \mathbf E_2)$.

\begin{proof}
As $W^{(\mathfrak{p})}$ is a $K$-vector space the algebraic group $S$ defined in
 Section \ref{sect2.3} acts on $W^{(\mathfrak{p})}$, and this action identifies
$S \iso \hbox{Res}_{F/\mathbb Q} \SO(W^{({\mathfrak p})})$.   Let
\begin{equation*}
 \theta(g, h, \phi^{({\mathfrak p})}) =\sum_{j \in W^{({\mathfrak p})}} \omega_{W^{({\mathfrak p})}, \psi_F}
 (g)\phi^{({\mathfrak p})}(h^{-1} j)
 \end{equation*}
 be the theta kernel; here $g \in \SL_2(\A_F)$ and  $h \in S(\A)$.  Let
 $$
\theta(g, \phi^{({\mathfrak p})}) = \int_{[S]}
\theta(g, h, \phi^{({\mathfrak p})}) \,  dh
$$
be the associated theta integral, where  $[S] =S(\Q) \backslash S(\A)$, and  $dh$ is an 
$S(\A)$-invariant  measure on $[S]$. Then
$$
\theta(\tau, \phi^{({\mathfrak p})}) =(v_1 v_2)^{-\frac{1}2} \theta(g_\tau, \phi^{({\mathfrak p})})
$$
is a  holomorphic Hilbert modular form of weight $1$. Moreover, the Siegel-Weil formula 
(\cite{Weil}, \cite{KuBintegral}) asserts
$$
E(\tau, 0, \phi^{(\mathfrak p)}) =C_1\theta(\tau, \phi^{(\mathfrak p)})
$$
 for some constant $C_1$ depending  on the measure $dh$ (in fact $C_1 =2 \cdot  \vol([S])^{-1}$, but we won't need this).
Next, by Lemma \ref{lem3.1.3}, there is an isomorphism  
$\widehat{V}(\mathbf E_1, \mathbf E_2) \cong \widehat{W}^{(\mathfrak p)}$ that maps
$\widehat{L}(\mathbf E_1, \mathbf E_2)$ onto $\widehat{\co}_K$. It follows that
$$
 \int_{[S]} \sum_{\substack{j \in W^{(\mathfrak{p})} \\ Q^{(\mathfrak{p})}(j) =\alpha} }
\mathbf \phi^{(\mathfrak{p})} (h^{-1} j)\, dh \\
= \int_{[S]} \sum_{\substack{j \in V(\mathbf E_1, \mathbf E_2) \\ \deg_{\CM}(j) =\alpha} }
\mathbf 1_{\widehat{L}(\mathbf E_1, \mathbf E_2)}(h^{-1} j)\, dh
$$
is the $\alpha$-th Fourier coefficient of $\theta(\tau, \phi^{(\mathfrak{p})})$.

Recall from Section \ref{sect2.3} that  $\Gamma$ acts on $[\mathcal X(\mathbb F_{p}^\alg)]$.  Using the map
(\ref{torus ideals}) we obtain an action of $T(\widehat{\Q})$ on $[\mathcal X(\mathbb F_{p}^\alg)]$,
and this action factors through the map  $\eta: T(\widehat{\Q})\map{} S(\widehat{\Q})$ of  (\ref{unitary map}). It is easy to see
that $\mathbf 1_{\widehat{L}(\mathbf E_1, \mathbf E_2)}$ is invariant under $V=\eta(U)$, so there is a constant $C$,
independent of $\alpha$, such that
\begin{align*}
E_\alpha(\tau, 0, \phi^{(\mathfrak p)}) q^{-\alpha}
 &= C \sum_{h \in S(\Q) \backslash S(\widehat{\Q})/\eta(U)} \sum_{\substack{j \in V(\mathbf E_1, \mathbf E_2) \\ \deg_{\CM}(j) =\alpha} } \mathbf 1_{\widehat{L}(\mathbf E_1, \mathbf E_2)}(h^{-1} j)
 \\
  &= C \sum_{t \in T(\Q) \backslash T(\widehat{\Q})/U}
  \sum_{\substack{j \in V(\mathbf E_1, \mathbf E_2) \\ \deg_{\CM}(j) =\alpha} } \mathbf 1_{t\action \widehat{L}(\mathbf E_1, \mathbf E_2)}( j)
  \\
   &=C \sum_{(\mathfrak a_1, \mathfrak a_2)\in \Gamma} \sum_{\substack{j \in L(\mathbf E_1 \otimes \mathfrak a_1, \mathbf E_2\otimes \mathfrak a_2) \\ \deg_{\CM}(j) =\alpha}} 1.
\end{align*}
The last identity follows from Proposition \ref{Prop:index} and the discussion following Remark \ref{four orbits}.
Notice that  the Eisenstein series has  constant term $E_0(\tau, 0,  \phi^{(\mathfrak p)})=2$
(see for example \cite[Theorem 1.2]{YaValue}).  Taking $\alpha =0$ on both sides, one sees that
$$
C=\frac{2}{|\Gamma|} = \frac{2}{h_1 h_2}
$$
where $h_i$ is the  class number of $K_i$.  Now the proposition follows from the class number formula
$$
\Lambda(1, \chi) = \Lambda(1, \chi_1) \Lambda(1, \chi_2) = \frac{ 4 h_1 h_2}{\mathbf w_1 \mathbf w_2},
$$
in which $\chi_i$ is the quadratic Dirichlet character associated to $K_i/\Q$.
Alternatively, one can find $C$ by tracking the Haar measures involved.
\end{proof}

  \begin{Prop}  \label{lem3.11}
  Assume  $ \mathfrak p \in  \Diff(\alpha) $.
\begin{enumerate}
\item
 One has $W_{\alpha, \mathfrak p}^*(1, s, \phi_{\mathfrak p}^{(\mathfrak p)}) =0$ unless  $\alpha \in  \mathfrak D_{\mathfrak p}^{-1}$. In such a case, one has
  $$
  W_{\alpha, \mathfrak p}^*(1, 0, \phi_{\mathfrak p}^{(\mathfrak p)}) =-1.
  $$
\item
 One has
  $$
  W_{\alpha, \mathfrak p}^{*, \prime}(1, 0, \phi_{\mathfrak p}^{\mathcal C})
       = \nu_\mathfrak p(\alpha)   {W_{\alpha, \mathfrak p}^*(1, 0, \phi_{\mathfrak p}^{(\mathfrak p)})} \log \norm(\mathfrak p).
  $$
  Here
  $$\nu_\mathfrak p(\alpha)= \frac{1}2 \ord_{\mathfrak p}(\alpha \mathfrak p \mathfrak D)
  $$
as in Proposition  \ref{Prop:local length}.
  \end{enumerate}
  \end{Prop}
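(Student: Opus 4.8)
The plan is to obtain part (2) as a formal consequence of part (1) together with Proposition \ref{prop3.3}, and to devote the real work to the single local Whittaker value in part (1).

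First observe that part (2) is immediate once part (1) is known. Since $\mathfrak p \in \Diff(\alpha)$ means $\chi_{\mathfrak p}(\alpha\mathfrak D) = -1$, Proposition \ref{prop3.3}(2) applies to the section $\phi_{\mathfrak p}^{\mathcal C} = \phi_{\mathfrak p}^+ = \mathbf 1_{\co_{K,\mathfrak p}}$ and gives
$$
W_{\alpha, \mathfrak p}^{*, \prime}(1, 0, \phi_{\mathfrak p}^{\mathcal C}) = -\tfrac{1}{2}\ord_{\mathfrak p}(\alpha\mathfrak p\mathfrak D)\log\norm(\mathfrak p) = -\nu_{\mathfrak p}(\alpha)\log\norm(\mathfrak p).
$$
Comparing this with the value $W_{\alpha,\mathfrak p}^*(1,0,\phi_{\mathfrak p}^{(\mathfrak p)}) = -1$ supplied by part (1) yields exactly the asserted identity. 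Thus everything reduces to computing the value in part (1).

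For part (1), I would run the computation that proves Proposition \ref{prop3.3}, but for the minus space. The point is that $\phi_{\mathfrak p}^{(\mathfrak p)} = \mathbf 1_{\co_{K,\mathfrak p}}$ now sits on $W_{\mathfrak p}^- = (K_{\mathfrak p}, \tfrac{\pi_{\mathfrak p}}{\sqrt D}\mathrm{Nm}_{K_{\mathfrak p}/F_{\mathfrak p}})$ rather than on $\mathcal C_{\mathfrak p} = W_{\mathfrak p}^+$. Using the scaling relation $\omega_{(K_{\mathfrak p},\,\beta\,\mathrm{Nm}),\psi_{\mathfrak p}} = \omega_{(K_{\mathfrak p},\,\mathrm{Nm}),\,\psi_{\mathfrak p}(\beta\,\cdot)}$ with $\beta = \pi_{\mathfrak p}/\sqrt D$, together with the matching change of variable in the defining integral $\int_{F_{\mathfrak p}}\Phi_{\mathfrak p}^{(\mathfrak p)}(wn(b),s)\psi_{\mathfrak p}(-\alpha b)\,db$, reduces the problem to the Whittaker function of the self-dual lattice $\co_{K,\mathfrak p}$ in $(K_{\mathfrak p},\mathrm{Nm})$ against the twisted character, which is precisely the situation handled by \cite[Propositions 2.1 and 2.2]{YaValue}. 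The self-duality of $\co_{K,\mathfrak p}$ relative to $\mathfrak D_{\mathfrak p}^{-1}$ produces the vanishing factor $\mathbf 1_{\mathfrak D_{\mathfrak p}^{-1}}(\alpha)$, exactly as in Proposition \ref{prop3.3}(1).

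The outcome I expect is the clean formula
$$
W_{\alpha,\mathfrak p}^*(1,s,\phi_{\mathfrak p}^{(\mathfrak p)}) = -\,|D|_{\mathfrak p}^{-s/2}\,\mathbf 1_{\mathfrak D_{\mathfrak p}^{-1}}(\alpha)\,\rho_{\mathfrak p}(\alpha\mathfrak D\mathfrak p^{-1},s),
$$
in which the extra factor $\pi_{\mathfrak p}$ in $Q_{\mathfrak p}^-$ shifts the argument of $\rho_{\mathfrak p}$ from $\alpha\mathfrak D$ to $\alpha\mathfrak D\mathfrak p^{-1}$. Specializing at $s=0$ and using that $\mathfrak p \in \Diff(\alpha)$ forces $\ord_{\mathfrak p}(\alpha\mathfrak D)$ to be odd, hence $\ord_{\mathfrak p}(\alpha\mathfrak D\mathfrak p^{-1})$ to be even, the alternating sum $\rho_{\mathfrak p}(\alpha\mathfrak D\mathfrak p^{-1})$ has an odd number of terms and therefore equals $1$, giving $W_{\alpha,\mathfrak p}^*(1,0,\phi_{\mathfrak p}^{(\mathfrak p)}) = -1$. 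The one genuinely delicate point — and the place where the coherent/incoherent dichotomy is visible — is the overall minus sign: it comes not from the $\rho$-sum but from the Weil index (equivalently, the local Hasse invariant) of $W_{\mathfrak p}^-$, which differs from that of $W_{\mathfrak p}^+$ precisely because $W_{\mathfrak p}^-$ is the local space representing $\alpha$ while $\mathcal C_{\mathfrak p}$ does not. Pinning down this sign is the main obstacle, and I would settle it by carefully tracking the $\gamma$-factor attached to $\beta = \pi_{\mathfrak p}/\sqrt D$ in the change-of-variable step.
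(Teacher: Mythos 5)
Your proposal is correct and follows essentially the same route as the paper: the authors prove part (1) by citing \cite[Proposition 2.2]{YaValue} together with the same $\psi_{\mathfrak p}\mapsto\psi_{\mathfrak p}'$ scaling reduction used in the proof of Proposition \ref{prop3.3} (you merely sketch that reference's minus-space computation instead of quoting it), and they deduce part (2) from Proposition \ref{prop3.3} exactly as you do. The only point you leave implicit, which the paper states explicitly, is that when $\alpha\notin\mathfrak D_{\mathfrak p}^{-1}$ both sides of the identity in (2) vanish --- the right side by your part (1), and the left side because Proposition \ref{prop3.3}(1) shows $W_{\alpha,\mathfrak p}^*(1,s,\phi_{\mathfrak p}^{\mathcal C})$ vanishes identically in $s$, so its derivative at $s=0$ is zero.
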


  \begin{proof} The first equality of (1) follows from \cite[Proposition 2.2]{YaValue}
  (and the argument in the proof of Proposition \ref{prop3.3}). The rest follows from Proposition \ref{prop3.3}.
  Notice that both sides of the stated equality of (2) are zero if $\alpha \notin \mathfrak D_\mathfrak p^{-1}$.
  \end{proof}

  \begin{proof} [Proof of Theorem \ref{maintheo}] Assume first that $\alpha \in F^\times$ is totally positive and
  $ \Diff(\alpha) =\{ \mathfrak p\}$. Let $p$ be the rational prime below $\mathfrak p$.  As $\mathfrak{p}$ is inert in $K$, $p$
  is nonsplit in both $K_1$ and $K_2$.  Given a (necessarily supersingular) CM pair  $(\mathbf E_1, \mathbf E_2)$ over $\F_p^\alg$,
   write $\mathfrak p (\mathbf E_1, \mathbf E_2)$ for its reflex prime.
  Recall from Theorem \ref{Thm:local length} that for any geometric point  $x\in  \mathcal X_\alpha(\mathbb F_p^\alg)$
  representing a triple $(\mathbf{E}_1,\mathbf{E}_2,j)$ we have   $\mathfrak p(\mathbf E_1, \mathbf E_2)=\mathfrak p$  and
  $$
 \length( \co^\mathrm{sh}_{  \mathcal{X}_\alpha, x  } )= \nu_\mathfrak p(\alpha)=
  \frac{1}2 \ord_{\mathfrak p}(\alpha \mathfrak p \mathfrak D) .
 $$
 Therefore
\begin{align*}
\deg(\mathcal X_\alpha)
&= \nu_\mathfrak p(\alpha) \log (p) \sum_{(\mathbf E_1, \mathbf E_2, j) \in [\mathcal X_{\alpha}(\mathbb F_p^\alg)]}
 \frac{1}{|\Aut(\mathbf E_1, \mathbf E_2, j)|} \\
 &= \nu_\mathfrak p(\alpha) \log (p) \sum_{(\mathbf E_1, \mathbf E_2) \in [\mathcal X(\mathbb F_p^\alg)]}
  \sum_{ \substack{ j \in L(\mathbf E_1, \mathbf E_2) \\ \cmdeg(j)=\alpha }} \frac{1}{|\Aut(\mathbf E_1, \mathbf E_2)|}.
 \end{align*}
Lemma \ref{C stab} then implies
\begin{align*}
\deg (\mathcal X_\alpha) &= \frac{\nu_\mathfrak p(\alpha)}{\mathbf w_1 \mathbf w_2} \log( p) \sum_{\substack{ (\mathbf E_1, \mathbf E_2) \in [\mathcal X(\mathbb F_p^\alg)] \\  \mathfrak p(\mathbf E_1, \mathbf E_2) =\mathfrak  p}} \sum_{\substack{ j\in L(\mathbf E_1,  \mathbf E_2) \\ \deg_{\CM} j =\alpha}} 1.
\end{align*}
By Propositions \ref{Prop:naked count} and Lemma \ref{C stab}, $\Gamma$ acts on the set of
$(\mathbf E_1, \mathbf E_2) \in  [\mathcal X(\mathbb F_p^\alg)] $ with reflex prime $\mathfrak p$ freely with two orbits,
so  Proposition \ref{Prop:counting} implies
$$
\deg (\mathcal X_\alpha) = \frac{1}4  \nu_\mathfrak p(\alpha) \log (p ) E_\alpha^*(\tau, 0,  \phi^{(\mathfrak p)}) q^{-\alpha}.
$$
Next,  Proposition  \ref{lem3.11} implies
\begin{align*}
E_{\alpha}^{*, \prime}(\tau, 0, \phi^{\mathcal C})
 &=W_{\alpha, \mathfrak p}^{*, \prime}(1, 0, \phi_{\mathfrak p}^{\mathcal C})
 \prod_{\mathfrak q \ne \mathfrak p} W_{\alpha, \mathfrak q}^{*}(1, 0, \phi_{\mathfrak q}^{\mathcal C})
 \prod_{l=1}^2 W_{\alpha, \sigma_l}^*(\tau_l, 0, \phi_{\sigma_l}^\mathcal C)
 \\
  &=
  \nu_\mathfrak p(\alpha) \log \norm(\mathfrak p) \cdot E_\alpha^*(\tau, 0,  \phi^{(\mathfrak p)}).
\end{align*}
Notice that $\norm(\mathfrak p) =p$ by Remark \ref{Rem:prime description},  so we have
$$
4 \deg (\mathcal X_\alpha)  \cdot q^\alpha = E_\alpha^{*, \prime}(\tau, 0, \phi^{\mathcal C}) = E_\alpha^{*, \prime}(\tau, 0)
$$
by
 Proposition \ref{Prop:Eisenstein}, as claimed in  Theorem \ref{maintheo}.

Next, if $|\Diff(\alpha)| >1$ then $\ord_{s=0} E_\alpha^*(\tau, s,  \phi^\mathcal C) >1$ by Proposition \ref{prop3.3},
while  $\mathcal X_\alpha$ is empty by Theorem \ref{Thm:local length}. Thus both sides of the desired equality are zero.
\end{proof}

One advantage of the conceptual proof is that  Theorem \ref{maintheo} can be proved for $\Diff(\alpha)=\{\mathfrak{p}\}$
by only doing local calculations at  $\mathfrak{p}$.  For example one does not need to compute (as in Section \ref{ss:quadratic})
the  local structure of  $(V(\mathbf{E}_1,\mathbf{E}_2) , \cmdeg)$ at primes other than  $\mathfrak{p}$, nor
any of the orbital integrals of Section \ref{ss:orbital},
nor does one need to  compute  local Whittaker functions at primes other than $\mathfrak{p}$.  The obvious
disadvantage is that one does not obtain the explicit values for  $\deg(\mathcal{X}_\alpha)$ and $a_\alpha$ of
Theorems \ref{geometric theorem} and \ref{theo1.2}.

\end{document}